\newtheorem{theorem}{Theorem}
\newtheorem{lemma}{Lemma}
\theoremstyle{definition}
\newtheorem{remark}{Remark}
\numberwithin{equation}{section}
\numberwithin{figure}{section}
\title{Dynamics of Discrete Time Systems with a Hysteresis Stop Operator}
\author{Maxim Arnold\thanks{University of Texas at Dallas, TX, USA}\and
Nikita
Begun\thanks{Free University of Berlin, Germany and Saint Petersburg State
University, Russia} \and Pavel Gurevich\thanks{Free University of Berlin,
Germany and Peoples' Frienship University of Russia, Russia}\and Eyram
Kwame\footnotemark[1]\and Harbir Lamba\thanks{George Mason
University,
VA, USA} \and Dmitrii~Rachinskii\footnotemark[1]}
\date{}
\begin{document}

\maketitle
\newcommand{\slugmaster}{%
	\slugger{siads}{xxxx}{xx}{x}{x--x}}%slugger should be set to juq, siads,
%sifin, or siims

\begin{abstract}
We consider a piecewise linear two-dimensional dynamical system that couples a linear equation with the so-called {\em stop} operator.
Global dynamics and bifurcations of this system are studied depending on two parameters.
The system is motivated by modifications to general-equilibrium
macroeconomic models
that attempt to capture the frictions and memory-dependence of
realistic economic agents.
\end{abstract}

%\rmfamily
\section{Introduction}

The {\em stop} operator was proposed by L.~Prandtl as an elementary model of quasistatic elastoplasticity \cite{prandtl}, see
Fig.~\ref{fig:0a}. It presents a simple example of a {\em rate-independent}
operator
%\footnote{Rate-independent operator $\mathcal A$ is defined as an
%operator that acts in a space of functions $x(t)$ of time and commutes
%with
%invertible transformations $\varphi: \mathbb{R} \to
%\mathbb{R}$ of the time axis, $\mathcal A \circ \varphi=\varphi \circ
%\mathcal A$ \cite{visintin}.}
 with local {\em memory} \cite{visintin},
and, as such, is used as an elementary building block for important models of {\em hysteresis} phenomena such as the Prandtl--Ishlinskii operator \cite{K}, the Preisach operator \cite{KP}, and their generalizations \cite{mayergoyz1%,general
}. Applications of these {\em nonsmooth} operators
include modeling friction \cite{%friction,
ruderman}, elastoplastic materials \cite{pi0%,kresp
}, magnetic hysteresis \cite{mayergoyz1},
fatigue and damage counting \cite{Mic,Ryc}, constitutive laws of {\em smart} materials \cite{%Davino-Krejci-Visone2013,
model,iyertan,pra%,kuh
}, sorption hysteresis \cite{d13,d16,soilstab,
%okane,
sander}, and phase transitions \cite{BS}. More recent applications range
from biology and medicine \cite{grant,med1} to economics and finance
\cite{Lam1,cross1,%Lamb,
nsf1}.
On the other hand, {\em stop} can also be viewed as a solution operator of a simple variational inequality
describing the Moreau sweeping process with rigid characteristic in one dimension \cite{moro}, see Fig.~\ref{fig:0b}.

\begin{figure}
\centering
\begin{subfigure}{.4\textwidth}
	\vspace{0.38 cm}
  \includegraphics[width=\textwidth]{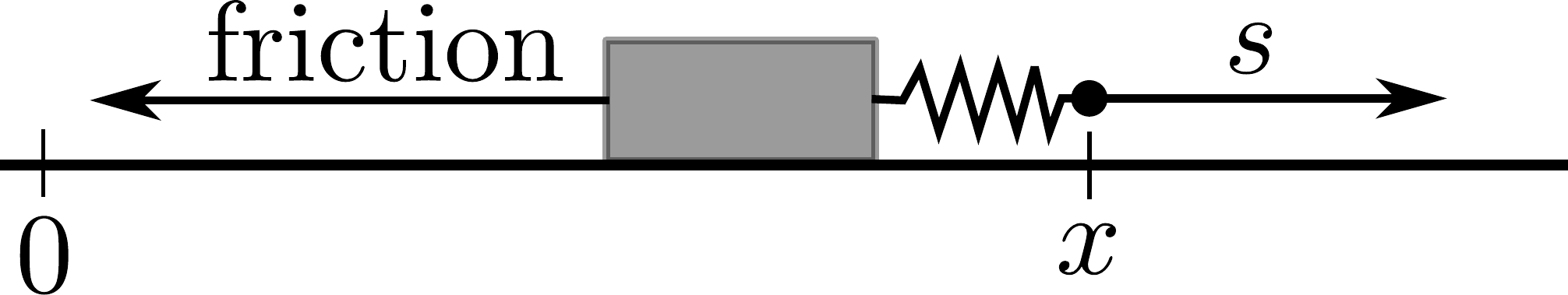}
  \caption{}
  \label{fig:0a}
\end{subfigure}\qquad
\begin{subfigure}{.4\textwidth}
  \includegraphics[width=\textwidth]{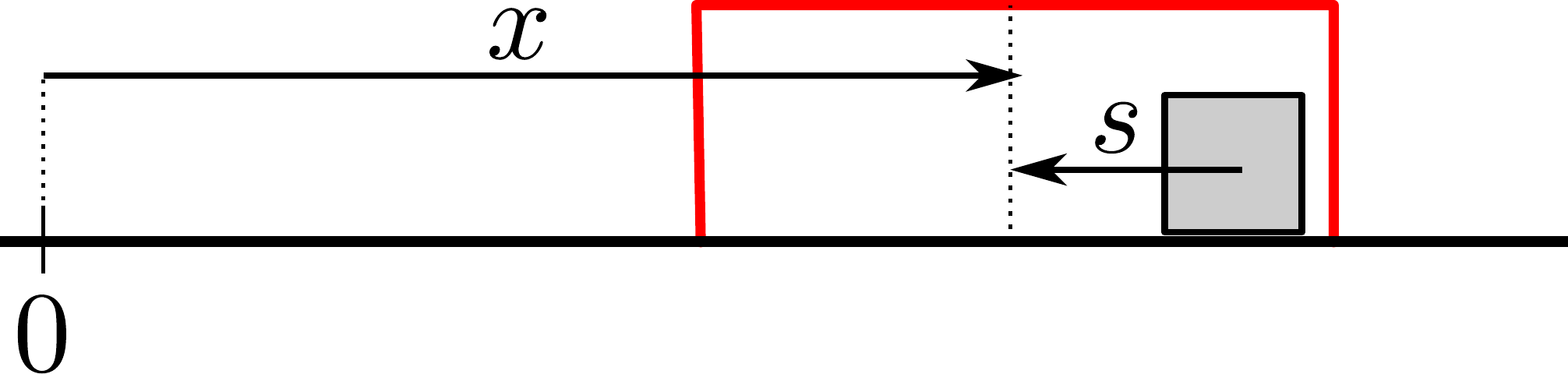}
  \caption{}
  \label{fig:0b}
\end{subfigure}
\caption{Interpretations of the stop operator. (a): Schematic of the
	Prandtl's model of quasi-static elastoplasticity: the box is not moving
	unless
	the absolute value of the force $s$ of the ideal spring reaches the
	maximal value $\rho$ of friction. (b): Schematic of the Moreau
	sweeping process with rigid characteristic in one dimension.
	The position $x$ of the center of the outer frame is the input; the relative
	position $s$ of the center of the frame  with respect to
	the center of the box is the output. The frame moves and drags the
	box.
	}
\label{fig:test}
\end{figure}

Modeling of closed systems that exhibit hysteresis typically leads to differential equations
which include the above nonsmooth operators. Dynamics of these systems
%continuous time models
have been analyzed with various techniques including topological degree methods \cite{hopf1, hopf2, chapter,ten}, differential inclusions \cite{kunze},
switched systems \cite{Ast}, and energy considerations using the dissipative property of hysteresis \cite{%dissip1,
dissip2}.
As most of these models are motivated by engineering and physics applications,
they are naturally formulated in continuous time setting.
Discrete time systems with hysteresis operators have received little attention and were studied mostly
in the context of numerical discretizations of continuous systems. However,
the discrete time modeling is typical for certain applications, e.g., in economics, and one can expect that discrete time models motivated by
such applications can exhibit interesting dynamical scenarios
when nonsmooth hysteresis terms are included.

In this paper, we consider an example of a simple discrete time system which consists of a linear
scalar equation coupled with the one-dimensional {\em stop} operator. This system can be equivalently written as a two-dimensional
piecewise linear map. It has multiple equilibrium points which form a segment in the phase space.
We present analysis of global dynamics and bifurcations depending on two parameters.
In particular, the global attractor can consist of two semi-stable equilibrium points,
a segment of stable equilibrium points, a segment of unstable equilibrium points,
a $2$-periodic orbit, a $2$-periodic orbit and two semi-stable equilibrium
points, and, in a critical case,
a two-dimensional set of equilibrium and $2$-periodic points.
For a certain open set of parameter values, the system possesses infinitely many unstable periodic orbits.

The paper is organized as follows. In the next section, we present the main
results.
In Section \ref{sec:discussion}, a motivating economics example is
discussed.
In a standard setting of a general-equilibrium macroeconomic model,
we propose modeling stickiness in agent's expectations by the {\em play}
operator dual to the {\em stop}.
This leads us to a four-dimensional system containing the {\em stop} operator, and we
present a few numerical examples of its dynamics.
The two-dimensional system discussed in Section \ref{sec:main} can be
considered as a simple prototype counterpart of this
higher dimensional economic model.
The last section contains the proofs.

\section{Main results}
\label{sec:main}
Let $s_0\in[-1,1]$ and let $\{x_n\}$, $n\in \mathbb{N}_0$, be a real-valued sequence.
The {\em stop} operator $S$ maps a pair $s_0,\, \{x_n\}$ to a sequence defined by the formula
$$
s_{n+1}=\Phi(s_n+x_{n+1}-x_n),\quad n\in\mathbb{N}_0,
$$
where
%$\Phi$ is the cut-off function for the interval $[-1,1]$:
\begin{equation}\label{eq1}
\Phi(\tau)=\left\{\begin{array}{rll} -1 & {\rm if} & \tau<-1,\\
\tau & {\rm if} & |\tau|\le 1, \\
1 & {\rm if} & \tau>1,
\end{array}
\right.
\end{equation}
see Fig.~\ref{fig:0c}. Here $s_0$ is called the initial state, $\{x_n\}$ is called the input, and $\{s_n\}$ is called the output (or, the variable state) of the stop operator.
The operator that maps the pair
$s_0$, $\{x_n\}$ to the
sequence $\{x_n-s_n\}$ is called the \emph{play} operator (see
Fig.~\ref{fig:0d}).

\begin{figure}[ht!]
	\centering
	\begin{subfigure}{.48\textwidth}
	\includegraphics[width=\textwidth]{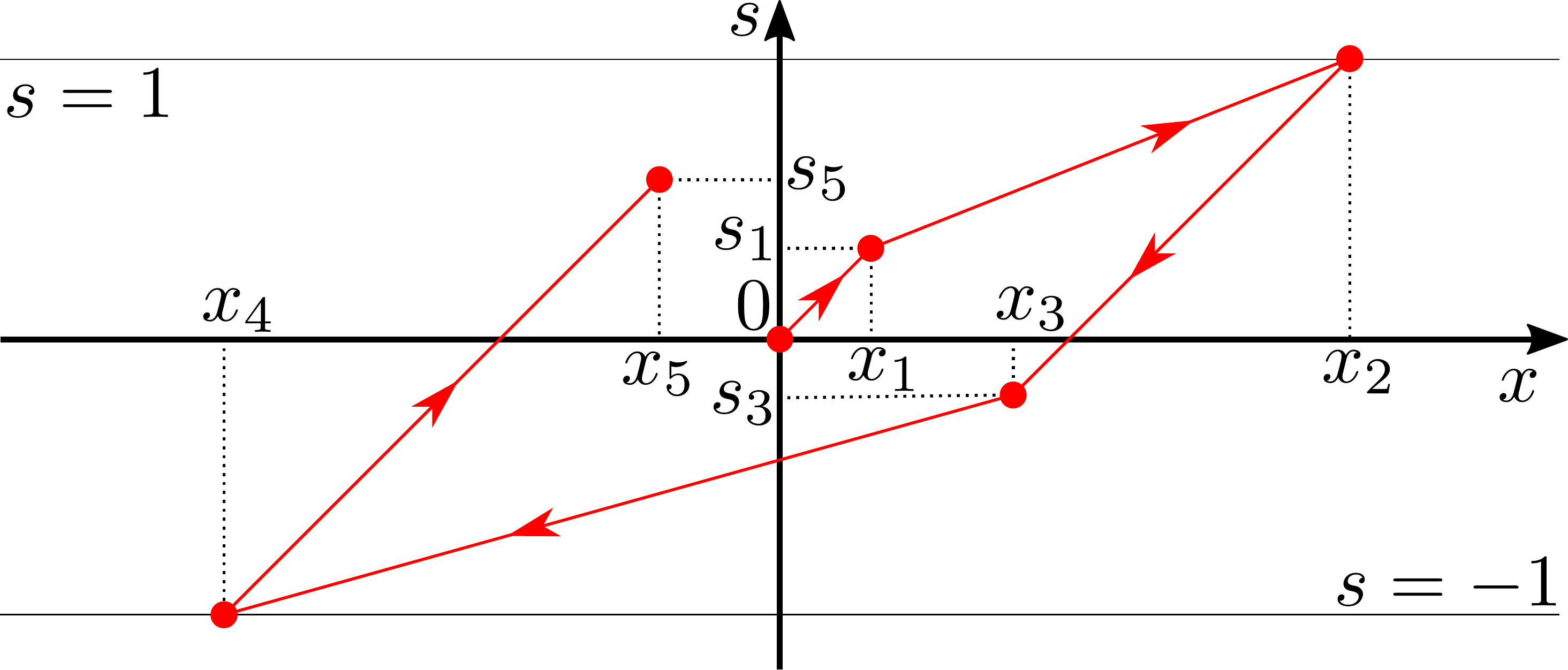}
		\caption{}\label{fig:0c}
	\end{subfigure}\quad
	\begin{subfigure}{.48\textwidth}
		\vspace{0.1 cm}
	\includegraphics[width=\textwidth]{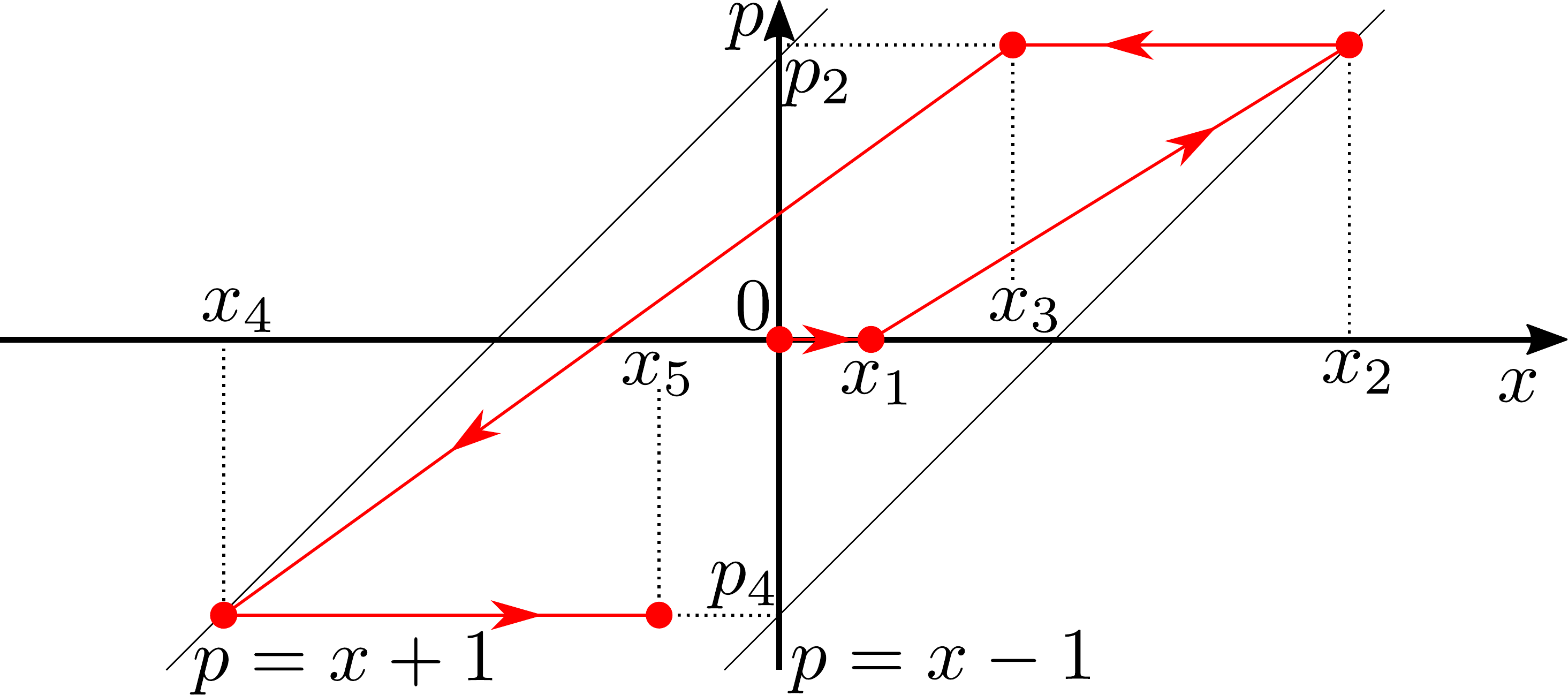}
		\caption{}\label{fig:0d}
	\end{subfigure}
	\caption{Interpretations of the stop and play operators. (a): An
	example of the input-output sequence of the stop
		operator: $(0,0)=(x_0,s_0)$, $ (x_1,s_1)$, $(x_2,1)$, $(x_3,s_3)$,
		$(x_4,-1)$, $(x_5,s_5)$. (b): An
		example of the input-output sequence of the play
		operator: $(0,0)=(x_0,p_0)$, $(x_1,p_0)$,
		$(x_2,p_2)$, $(x_3,p_2)$, $(x_4,p_4)$,
		$(x_5,p_4)$.}
	\label{fig:PlayStop}
\end{figure}

Coupling the output of the stop operator with the input sequence via a linear
transformation with real-valued coefficients  $\lambda$ and $a$, we consider
the dynamical system
\begin{equation}\label{eq2}
\begin{cases}
x_{n+1}=\lambda x_n+ as_n,\\
s_{n+1}=\Phi(s_n+x_{n+1}-x_n)
\end{cases}
\end{equation}
with $n\in \mathbb{N}_0$ on the strip $L=\left\{(x,s): x \in \mathbb{R}, s \in
[-1,1]\right\}$. From hereon we assume
that $|\lambda|<1$. This inequality ensures that all the trajectories of system \eqref{eq2} are
bounded.

It is easy to see that the equilibrium points of system \eqref{eq2}
form the segment
\begin{equation}\label{eq3}
EF=\left\{(x,s): \  x=\frac{a s}{1-\lambda},\ -1\leq s\leq 1\right\}
\end{equation}
with the end points
$$E=(x^*,1)=\left(\frac{a}{1-\lambda},1\right), \quad
F=(-x^*,-1)=\left(-\frac{a}{1-\lambda},-1\right).
$$
\iffalse
see Fig.~\ref{fig:2a}.
\begin{figure}[ht]
\centering
\begin{subfigure}{.45\textwidth}
  \centering
  \includegraphics[width=\textwidth]{}
  \caption{}\label{fig:2a1}
\end{subfigure}
\qquad
\begin{subfigure}{.45\textwidth}
  \centering
  \includegraphics[width=\textwidth]{}
  \caption{}\label{fig:2a2}
\end{subfigure}
\caption{The segment $EF$ of equilibrium points with
$E=(x^*,1)=(\frac{a}{1-\lambda},1)$ and
$F=(-x^*,-1)=(-\frac{a}{1-\lambda},-1)$ for (a): $a>0$ and (b): $a<0$.}
\label{fig:2a}
\end{figure}
%\\
\fi

 We use the standard notion of stability and
instability (in the Lyapunov
sense) for equilibria and periodic orbits.
We will also say that an equilibrium point $(x_e,s_e)$ of system \eqref{eq2}
is {\em semi-stable} if there are open sets $U_1, U_2\subset \{(x,s):
|s|<1\}$ such that $(x_e,s_e)$ belongs to their boundaries and
simultaneously:
\begin{itemize}
	\item for every $\varepsilon>0$ there is a $\delta>0$ such that any
	trajectory starting from the $\delta$-neighborhood of the equilibrium
	point
	$(x_e,s_e)$ in the set $U_1$ belongs to the $\varepsilon$-neighborhood
	of $(x_e,s_e)$ for all positive $n$;
	
	\item there is an $\varepsilon_0>0$ such that any trajectory starting in
	$U_2$ leaves the $\varepsilon_0$-neighborhood of the equilibrium
	$(x_e,s_e)$ after a finite number of iterations.
\end{itemize}

Our main result consists in the classification of the long time behavior
for  the orbits of system \eqref{eq2}.
Dynamics of system \eqref{eq2} depends on the values of the
parameters $\lambda$ and $\beta=\lambda+a$ as described in the
Theorem \ref{Theorem:1} (see also the
diagram in Fig.~\ref{fig:2b}).

\begin{figure}[htb!]
\begin{center}
\includegraphics[width=0.8\textwidth]{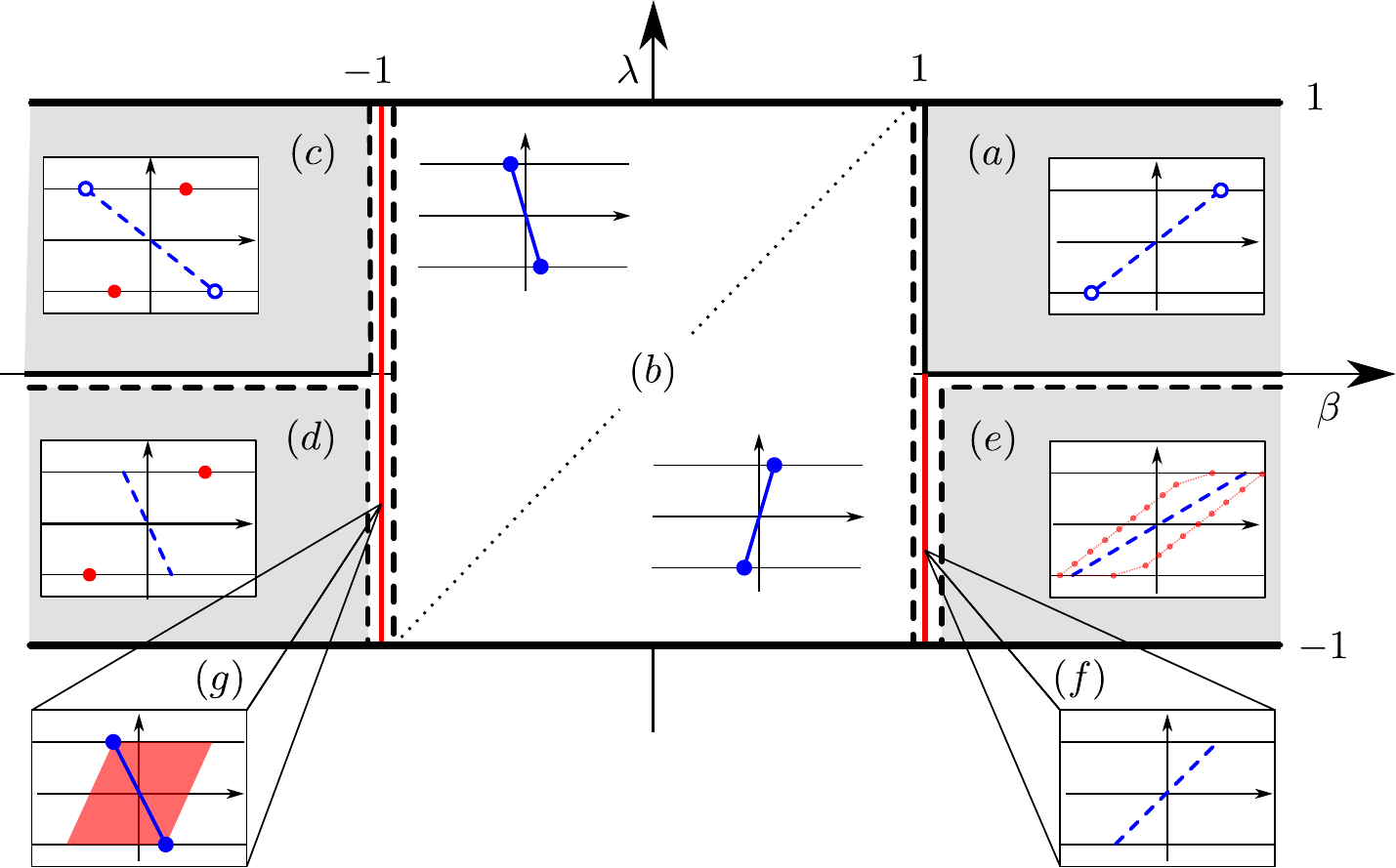}
\vspace{-1mm}
\caption{Bifurcation diagram. The segment $EF$
of the fixed points is shown by the blue line. Stable fixed
points are denoted by the solid line, unstable fixed points are shown by
the dashed line.  Stable end-points of
$EF$ are shown as filled blue discs; semi-stable points are denoted by
empty blue discs; in the unstable case, no special notation is used.  The dotted
line in case \ref{item_a} corresponds to the set
of parameters
leading to the infinite slope of the line $EF$.
Periodic points are shown in red. Filled red discs in cases \ref{item_c}
and \ref{item_d} correspond to the stable $2$-periodic orbit $\pm Q$;
the red parallelogram in case \ref{item_g} consists of stable
$2$-periodic orbits.
Case
\ref{item_e} corresponds to complex dynamics when the system has periodic orbits with arbitrary large periods (see Theorem
\ref{Theorem:2}).  One such orbit is sketched on the
diagram. In the critical case~\ref{item_f}, the segment
$EF$ attracts all the trajectories.}
\label{fig:2b}
\end{center}
\end{figure}

\begin{theorem}
	\label{Theorem:1} Let $\beta=\lambda+a$ and
$|\lambda|<1$.
% for system \eqref{eq2} it follows
\begin{enumerate}[label=\rm{(\emph{\alph*})}]

\item\label{item_b} If $\lambda\ge 0$, $\beta\ge 1$, then the equilibrium points
$E$
and $F$
are semi-stable and all the other equilibrium points are unstable. Each non-equilibrium trajectory either converges to $E$ or to
$F$.
\item \label{item_a} If $|\beta|<1$, then all the equilibrium points are
stable and
%the segment $EF$ of equilibrium points is the global attractor.
each trajectory of system \eqref{eq2} converges to an equilibrium point.

\item \label{item_c} If $\lambda\ge 0$, $\beta<-1$, then the points
$E$
and $F$ are semi-stable, all the other equilibrium points are unstable, and
there exists a
stable $2$-periodic orbit
\begin{equation}\label{per2}
\pm Q=\left(\mp\frac{a}{1+\lambda},\pm 1\right).
\end{equation}
Each non-equilibrium trajectory either converges to $E$ or to $F$ or to the
orbit \eqref{per2}.

\item \label{item_d} If $\lambda<0$, $\beta<-1$, then all the
equilibrium
points are unstable. Each non-equilibrium trajectory converges to the stable
$2$-periodic orbit \eqref{per2}.

\item \label{item_e} If $\lambda<0$, $\beta>1$, then all the
equilibrium
points are unstable. System \eqref{eq2} has periodic orbits
%of periods $2(n+1)$ for all $n\ge n_0\ge 1$.
of all sufficiently large periods.
At most one periodic orbit is stable.
%Stability of these trajectories is described by the diagram presented in Fig.~\ref{fig:6}.

\item\label{item_f} If $\lambda<0$, $\beta=1$, then all the equilibrium
points are unstable. Each trajectory either ends up at $E$ or at $F$, or
converges to the segment $EF$.

\item \label{item_g} If $\beta=-1$, then all the equilibrium points are
stable. The parallelogram
$$\Sigma=\left\{(x,s):\
2\frac{(1-\lambda)x-a}{1-\lambda+a}+1 \leq s\leq
2\frac{(1-\lambda)(x-1)}{1-\lambda+a}+1,\ |s|\leq 1\right\}$$
%$\Sigma$
with the vertices $E, F$, $Q=(1,1)$ and  $-Q=(-1,-1)$
consists
of stable
$2$-periodic orbits and the diagonal $EF$ of fixed points. Every
non-equilibrium trajectory converges either to one of the equilibrium
points $E$ or $F$, or to a $2$-periodic orbit in the parallelogram
$\Sigma$.
\end{enumerate}
\end{theorem}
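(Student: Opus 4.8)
The plan is to exploit the piecewise-linear structure by first substituting the $x$-update into the $s$-update, writing \eqref{eq2} as a planar map that is linear on the open strip $\{|s|<1\}$ (the \emph{interior}) and affine on each of the two saturated lines $s=\pm1$. The decisive first step is to diagonalize the interior part. A direct computation shows its matrix has trace $1+\beta$ and determinant $\beta$, hence eigenvalues $1$ and $\beta$; the eigenvalue $1$ points along $EF$ and the eigenvalue $\beta$ along $(1,1)$. I would encode this by the transverse coordinate $\xi=(1-\lambda)x-as$, which vanishes exactly on $EF$ and satisfies $\xi\mapsto\beta\xi$ in the interior. On either saturated line the map reduces to the scalar contraction $x\mapsto\lambda x\pm a$ with fixed points $E,F$, and one checks that there $\xi\mapsto\lambda\xi$. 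Thus $\xi$ is a single scalar whose growth factor is $\beta$ in the interior and $\lambda$ on the boundary, and the switching between the three linear pieces is governed entirely by the thresholds $\xi=0,\pm2$. Because $|\lambda|<1$ always, the boundary never expands $\xi$; the parameter $\beta$ alone controls whether the interior contracts, reflects, or expands, which is why the bifurcation set consists of the lines $\beta=\pm1$ together with $\lambda=0$, the latter flipping the orientation of the boundary map.

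With this normal form in hand I would organize the case analysis by the position of $\beta$ relative to $\pm1$. When $|\beta|<1$ (case \ref{item_a}) the quantity $|\xi|$ strictly decreases off $EF$ under both the interior and the boundary map, so it serves as a Lyapunov function forcing every orbit onto $EF$; the neutral eigenvalue $1$ along $EF$ then pins the limit to a single equilibrium, giving stability of all fixed points. For $\beta=-1$ (case \ref{item_g}) the interior map reflects, $\xi\mapsto-\xi$, while preserving the $EF$-component, so every interior point is genuinely $2$-periodic; I would identify $\Sigma$ as precisely the set whose $2$-cycle never leaves the linear region, and show boundary points are carried into $\Sigma$ in one step. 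For $\beta<-1$ (cases \ref{item_c}, \ref{item_d}) I would verify directly that $\pm Q$ is a $2$-periodic orbit living on the two boundaries, then read off its stability from the boundary factor $\lambda$; the sign of $\lambda$ distinguishes whether $E,F$ retain a one-sided basin (semi-stable, case \ref{item_c}) or lose it entirely (case \ref{item_d}).

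The expanding regime $\beta\ge1$ is where the sign of $\lambda$ becomes essential and where the real work lies. Here the interior pushes $\xi$ outward—geometrically if $\beta>1$, and linearly along $EF$ through a Jordan block if $\beta=1$—so every nonstationary orbit is eventually driven to a boundary, and the long-time behavior is dictated by the first-return map to $s=\pm1$. When $\lambda\ge0$ this return map is monotone and contracting, dragging orbits monotonically into $E$ or $F$ and yielding the semi-stable picture of case \ref{item_b}; the borderline $\beta=1$, $\lambda<0$ (case \ref{item_f}) I would handle by combining the Jordan drift along $EF$ with the oscillatory boundary contraction to show $EF$ attracts. The genuinely hard part is case \ref{item_e} ($\lambda<0$, $\beta>1$): instability of all equilibria again follows from the expansion of $\xi$, but the existence of periodic orbits of every large period and the claim that at most one is stable require a finer study of the essentially one-dimensional return map, whose rotation behavior and branch structure produce the complex dynamics. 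I expect this to be the main obstacle, and it is natural to isolate its quantitative content in the separate Theorem \ref{Theorem:2}, invoking it here to complete case \ref{item_e}.
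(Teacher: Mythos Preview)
Your eigenvalue analysis is correct and gives an elegant conceptual picture: the interior map does have eigenvalues $1$ (along $EF$) and $\beta$ (along $(1,1)$), and the coordinate $\xi = (1-\lambda)x - as$ indeed satisfies $\xi\mapsto\beta\xi$ in the interior and $\xi\mapsto\lambda\xi$ when a boundary line is not left. This explains cleanly why $\beta=\pm 1$ are the bifurcation lines. The paper, however, organizes the proof around the complementary coordinate $p=x-s$ (the play output), which is \emph{constant} along interior orbits rather than contracted, together with a trapping parallelogram $\Pi$ (see \eqref{PI} and Lemma~\ref{lm:Pi_par}) whose sides have slope $1$ and whose diagonal is $EF$. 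The paper then argues case by case about where a $p=\text{const}$ segment meets the boundary and what the boundary map does there. Your $\xi$ moves in the interior and is meant to serve as a Lyapunov function; the paper's $p$ freezes in the interior and serves as a label for the eventual equilibrium. For case~\ref{item_e} the two plans coincide: the paper also reduces to a one-dimensional first-return map $T$ on the half-line $s=1$, $x>x^*$ (Section~\ref{Case:8}).

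There is a genuine gap in your reduction, though. The claim that ``switching is governed entirely by the thresholds $\xi=0,\pm 2$'' is correct only for points already on $s=\pm 1$; from the interior the clipping condition is $|s_n-\xi_n|>1$, which depends on $s_n$ as well, so the dynamics does \emph{not} close up as a one-dimensional system in $\xi$. This undercuts your Lyapunov argument for case~\ref{item_a} in two places. First, at a clipping step $\xi_{n+1}$ is neither $\beta\xi_n$ nor $\lambda\xi_n$ but $\beta\xi_n$ plus $a$ times the overshoot; it turns out $|\xi_{n+1}|\le|\xi_n|$ still holds, but this needs a computation you have not indicated. Second, and more seriously, $|\xi_n|\to 0$ only yields convergence to the \emph{segment} $EF$: the neutral $EF$-component can drift at every clipping step, and you must show these drifts are summable to get convergence to a single equilibrium (and hence stability). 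The paper avoids both issues because $p$ is exactly preserved in the interior, so the limiting equilibrium is read off directly from the terminal $p$-value or from the monotone boundary contraction to $E$ or $F$. Your approach can be made to work, but it is not the shortcut it appears to be.
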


The existence of infinitely many periodic orbits in case \ref{item_e}
may indicate the presence of a global strange attractor or a chaotic attractor co-existing
with the stable periodic orbit. More detailed analysis of this case will be a
subject of future work.

Theorem \ref{Theorem:1} describes several bifurcation scenarios. In
particular, the period doubling scenario is interesting
because the stable $2$-periodic orbit, which exists for $\beta<-1$ (see
cases
\ref{item_c} and \ref{item_d}) is not close
to any equilibrium point (as would be typical for smooth systems).
Let us consider $\beta$ as a decreasing bifurcation parameter.
When this parameter crosses the value $-1$, the equilibrium points of the segment $EF$, which are stable for
$\beta\in (-1,1)$ (see case \ref{item_a}), destabilize and the $2$-periodic
orbit \eqref{per2}
appears away from the segment $EF$.
This transition is accompanied by the creation of the parallelogram
$\Sigma$ filled with $2$-periodic orbits
at the critical value $\beta=-1$. This parallelogram is spanned
by the $2$-periodic orbit $\pm Q=(\pm 1,\pm 1)$ and the equilibrium
points
$E$, $F$ (case \ref{item_g}).
%In this case, trajectories that converge to
%$2$-periodic orbits are eventually periodic.

\iffalse
\begin{figure}[h!]
%\vspace{-5mm}
\begin{center}
\includegraphics[width=8cm]{1DFigure2c.pdf}
\caption{Parallelogram $\Sigma=EP_1FP_2$ for case (g) of Theorem
\ref{Theorem:1}. The parallelogram is filled with $2$-periodic orbits (except
for the segment $EF$ of fixed points).}\label{fig:7}
\end{center}
\end{figure}
\fi
Assume that $\lambda<0$. When the parameter $\beta$ increases and
crosses the value $1$, the
equilibrium points destabilize and infinitely many periodic orbits appear
(see case \ref{item_e}).
Dynamics for the critical value $\beta=1$ is described by case
\ref{item_f}. The following theorem complements case \ref{item_e} of
Theorem
\ref{Theorem:1}.

\begin{theorem}\label{Theorem:2}
Assume that the conditions of case \ref{item_e} of Theorem
\ref{Theorem:1}
hold
and hence system \eqref{eq2} has infinitely many periodic orbits, of
which at most one is stable.  Then the relation
$(\lambda,\beta)\in \Omega_k$ with
\begin{equation}\label{eqn:C66'}
	%\lambda\beta^n+1<0, \lambda\beta^n+a-1>0
\Omega_k=\left\{(\lambda,\beta):\
\dfrac{\beta^k-1}{\beta-1}\le-\frac1{\lambda}<\beta^k, \ \beta>1,
-\frac1{\lambda}>1\right\},
\end{equation} where $k \in \mathbb{N}$,
ensures that system \eqref{eq2}  has a unique stable $(2k+2)$-periodic
orbit. If
$
(\lambda,\beta)\not \in \bigcup\limits_{k=1}^\infty \Omega_k,
$
then all the periodic orbits are unstable.
\end{theorem}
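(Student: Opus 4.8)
The plan is to reduce the variational (stability) analysis to a one-dimensional product of explicit scalar multipliers, exploiting two structural features of \eqref{eq2}. First, on any step that does not saturate ($|\tau|\le 1$, so $\Phi'=1$) the quantity $u=x-s$ is preserved and $s$ obeys the scalar affine map $s\mapsto\beta s+(\lambda-1)u$; since $\beta>1$ this map is expanding, so every interior transit is monotone in $s$ and carries the orbit from one boundary $s=\pm1$ to the other. On a boundary the motion is $x\mapsto\lambda x\pm a$, a contraction of slope $\lambda$ that a non-equilibrium orbit can follow for at most two consecutive steps. Second, and crucially, at every saturating step $\Phi'=0$, so in the linearization the $s$-perturbation is annihilated; the monodromy of any periodic orbit is therefore effectively one-dimensional, acting on $\delta x$ alone.

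First I would compute the transfer factor across a ``segment'' consisting of one saturation followed by $m\ge0$ interior steps and then the next saturation. Linearizing and using $\delta s=0$ immediately after a saturation gives
$$
\delta x\mapsto f(m)\,\delta x,\qquad f(m)=\lambda-(1-\lambda)\beta\,\frac{\beta^{m}-1}{\beta-1},
$$
with $f(0)=\lambda$, so $|f(0)|=|\lambda|<1$, while $|f(m)|>\beta>1$ for every $m\ge1$. Consequently the multiplier of a periodic orbit is the product $\prod_i f(m_i)$ over its successive saturations, where $m_i$ is the number of interior steps following the $i$-th saturation; equivalently it is $\prod_j \lambda f(m_j)$ once all boundary sojourns have length two (each two-step sojourn contributing one extra factor $f(0)=\lambda$).

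Next I would produce the candidate orbit. By the symmetry $(x,s)\mapsto(-x,-s)$ and the fact (Theorem~\ref{Theorem:1}\ref{item_e}) that at most one orbit is stable, any stable orbit must be invariant under this symmetry, hence symmetric. I build the symmetric orbit with two-step sojourns at $s=1$ and $s=-1$ and interior transits of length $k-1$ in each direction; it has period $2(k-1)+4=2k+2$, and the closure (periodicity) equation fixes its conserved value $u=u_k$ uniquely. Its multiplier is $(\lambda f(k-1))^2$, so stability is $|\lambda f(k-1)|<1$; using the identity $\beta\frac{\beta^{k-1}-1}{\beta-1}+1=\frac{\beta^{k}-1}{\beta-1}$ this reduces (recall $\lambda<0$) to $\frac{\beta^{k}-1}{\beta-1}<-\frac1{\lambda}$. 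On the other hand the itinerary is correct --- the downward transit saturates exactly at step $k$ --- precisely when the pre-clip value $\tilde s_k\le-1$, which upon substituting $u_k$ becomes $-\frac1{\lambda}<\beta^{k}$. The two inequalities together are exactly $(\lambda,\beta)\in\Omega_k$, giving existence and stability of the $(2k+2)$-orbit there; uniqueness then follows from Theorem~\ref{Theorem:1}\ref{item_e}.

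The main obstacle is the complementary statement that no periodic orbit is stable when $(\lambda,\beta)\notin\bigcup_k\Omega_k$. Here I would study the half-return map $H$ from the top boundary to the bottom boundary, which is piecewise affine with slope $\lambda f(m)>0$ on the branch of two-step sojourn and transit length $m$ (and the expanding slope $f(m)$, $|f(m)|>1$, on one-step-sojourn branches); these slopes increase with $m$, and the branch realized by an orbit is dictated by how far its conserved value $u$ drives the expanding interior map before saturation. The delicate point is to match the dynamical thresholds separating the branches to the arithmetic thresholds $\beta^{k}$ and $\frac{\beta^{k}-1}{\beta-1}$ defining $\Omega_k$, and to control all admissible itineraries --- including asymmetric or mixed ones whose multiplier is a product $\prod_j \lambda f(m_j)$ of several distinct factors --- so as to conclude that outside $\bigcup_k\Omega_k$ every admissible branch has $|\lambda f(m)|\ge1$ and hence every such product has modulus at least one. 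This combinatorial control of the product of multipliers over all realizable symbol sequences is the step I expect to be hardest.
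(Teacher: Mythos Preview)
Your linearization via the transfer factors $f(m)$ is exactly the paper's approach in disguise: the paper builds a piecewise-linear first-hit map $T$ from $\{s=1,\,x>a/(1-\lambda)\}$ to itself (via the symmetry), and your $f(m)$ and $\lambda f(m)$ are precisely $-T'$ on the one-step-sojourn branches $(q_{m+1},r_m)$ and on the two-step-sojourn branches $(r_m,q_m)$, respectively. Your construction of the symmetric $(2k+2)$-orbit, its multiplier $(\lambda f(k-1))^2$, and the reduction of the stability and itinerary conditions to the two inequalities defining $\Omega_k$ match the paper's Lemmas on fixed points of $T$; so the forward direction is fine (modulo the harmless strict-vs-nonstrict boundary and a routine check that the intermediate interior steps really stay interior, which follows from monotonicity of the $s$-recursion).

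The gap is exactly where you flag it, and your proposed attack is harder than needed. You plan to control products $\prod_j \lambda f(m_j)$ over all admissible itineraries, worrying that different $m_j$'s may mix contracting and expanding factors. The paper sidesteps this entirely. Let $k_0$ be the least $k$ with $\beta^{k}>-1/\lambda$ (so fixed points of $T$ first appear in $(q_{k_0+1},q_{k_0})$). One checks $T(r_{k_0})<r_{k_0-1}$, so the forward-invariant interval $[a/(1-\lambda),T(r_{k_0})]$ carrying all periodic points of $T$ is contained in $[a/(1-\lambda),r_{k_0-1}]$. On that interval the only branches are $(q_{m+1},r_m)$ with slope $|f(m)|>1$ (for $m\ge 1$), $(r_m,q_m)$ with slope $|\lambda f(m-1)|$ for $m\ge k_0$, and $(q_{k_0},r_{k_0-1})$ with slope $|f(k_0-1)|=|\lambda f(k_0-1)|/|\lambda|$. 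Now if $(\lambda,\beta)\notin\bigcup_k\Omega_k$, then since the existence inequality $-1/\lambda<\beta^m$ holds for every $m\ge k_0$, the stability inequality must fail for every $m\ge k_0$, i.e.\ $|\lambda f(m-1)|>1$ for all $m\ge k_0$. Thus \emph{every} branch of $T$ on the invariant interval has slope of modulus $>1$, and hence every periodic point of $T$ (of any period, symmetric or not) is unstable. No combinatorics over symbol sequences is needed: the potentially contracting two-step-sojourn branches with small $m<k_0$ simply lie outside the invariant interval and are never visited by a periodic orbit.
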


%\begin{remark} One can show that the unique stable $2(k+1)$-periodic orbit from Theorem \ref{Theorem:2} is defined by
%%\begin{equation}\label{eqn:C61}
%\[
%	
%x_0=-\frac{a(\lambda\beta^{k}+a-1)}{\lambda(\lambda-1)\beta^{k}+a\lambda+\lambda+a-1},\qquad
% s_0=1,
%\]
%%\end{equation}
%   $$x_1=\lambda x_0+a,\qquad s_1=1,$$
%\begin{equation}\label{eqn:Case6}
%	x_{i+2}=\beta^i(\lambda
%x_1+a)+a(1-x_1)\frac{\beta^i-1}{\lambda+a-1},\qquad 0\leq i\leq k-2,
%\end{equation}
%\[
% s_{i+2}=1+x_{i+2}-x_1,\qquad 0\leq i\leq k-2,
%\]
%\[
%(x_{i+k+1},s_{i+k+1})=(-x_i,-s_i),\qquad 0\leq i\leq k.
%\]
%\end{remark}

\begin{remark}
	{\rm
\ The domains $\Omega_k$ of existence of stable periodic orbits with
different periods do not intersect (see
Fig.~\ref{fig:6}).}
\end{remark}

\begin{remark} {\rm \ It will follow from the proof of
Theorem~\ref{Theorem:2}
that if $(\lambda,\beta)$ belongs to the interior of $\Omega_k$ for some
$k$, then the corresponding stable periodic orbit is asymptotically stable.}
\end{remark}

\begin{figure}[ht]
\begin{center}
\includegraphics[width=0.6\textwidth]{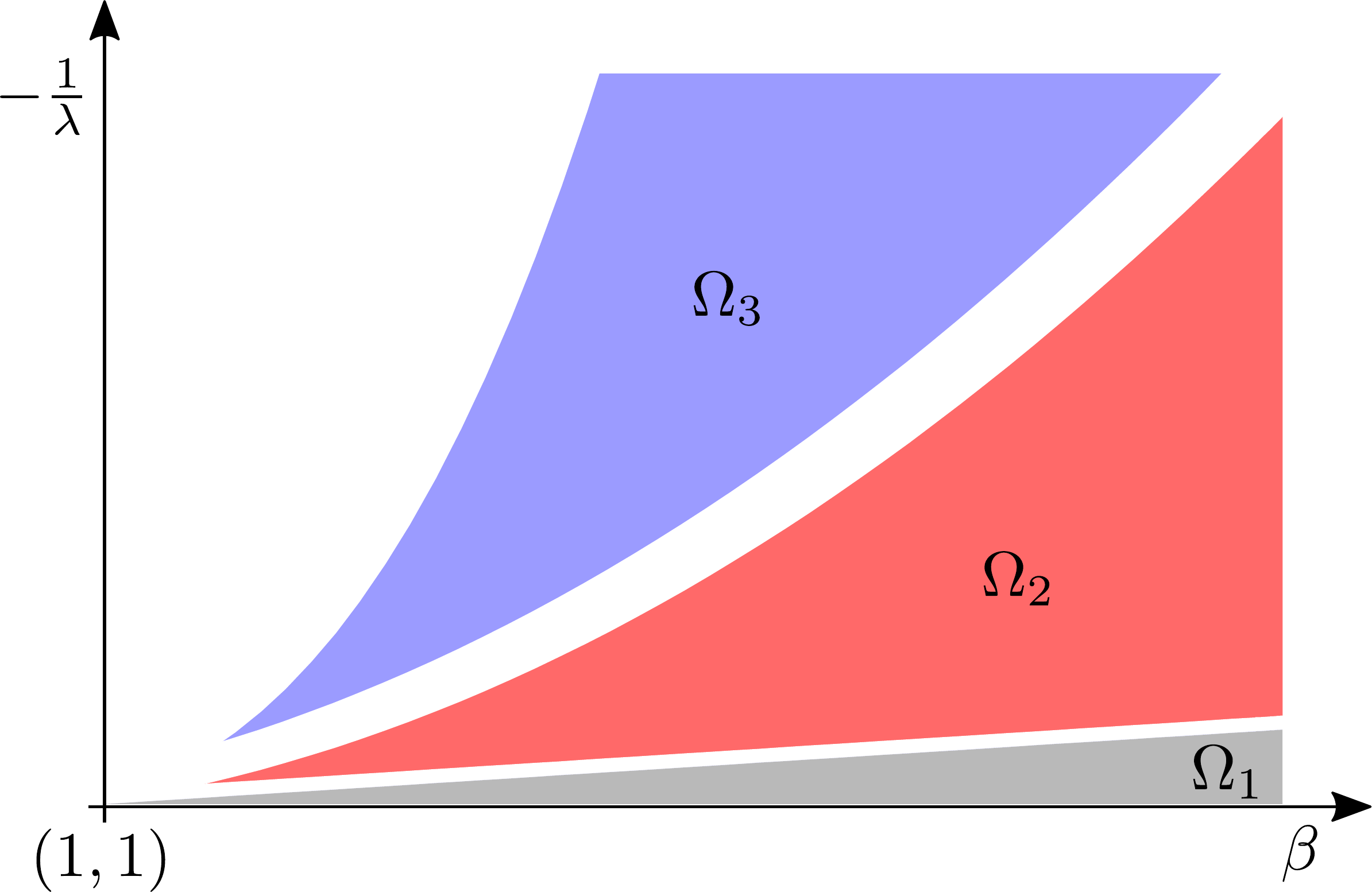}%, height=6cm
\caption{Domains $\Omega_k$ of existence of a (unique) stable periodic
orbit of period $2k+2$
in the coordinates $\beta=\lambda+a>1$ and $-1/\lambda>1$ for
case \ref{item_e} of Theorem \ref{Theorem:1}.}\label{fig:6}
\end{center}
\end{figure}

\section{Discussion}\label{sec:discussion}
Hysteresis effects, which are well known in engineering and physics, have become a topic of interest in
other disciplines. In economics, hysteresis has been well documented in the relationship between
the output of the economy and unemployment rate \cite{cross0}. Hysteresis
has been also closely associated with other stylized facts  such as {\em
path dependence} \cite{pathd,g}, {\em stickiness} of prices and information
\cite{sticky3,sticky4,sticky1}
%sticky2
, and {\em heterostasis} (multiplicity of equilibria) \cite{Goecke-play1}
that describe empirical economic data. An attempt to obtain quantitative models of these empirical observations naturally motivated the
use of the play operator and more complex models of hysteresis developed in physics in the economic context.
For example, the play operator was shown to produce a good model of the dependence of supply and demand on the price \cite{Goecke-play,laura}.
This model was fitted to microeconomic data based on a survey of German beer exports. It replaces
the demand and supply curves by play operators and predicts well the observed price rigidity.
The Preisach operator has been applied to modeling hysteresis in
unemployment \cite{scihys}.
Furthermore, the phenomenology of these hysteresis models is compatible with the multi-agent
modeling framework typical for economic models \cite{Lam1,Lamb,lambaimpl}.

\begin{figure}[h!]
	%\centering
	\begin{subfigure}{0.32\textwidth}
		%\begin{minipage}[b]{0.45\linewidth}
		%\centering
		\includegraphics[width=\textwidth]{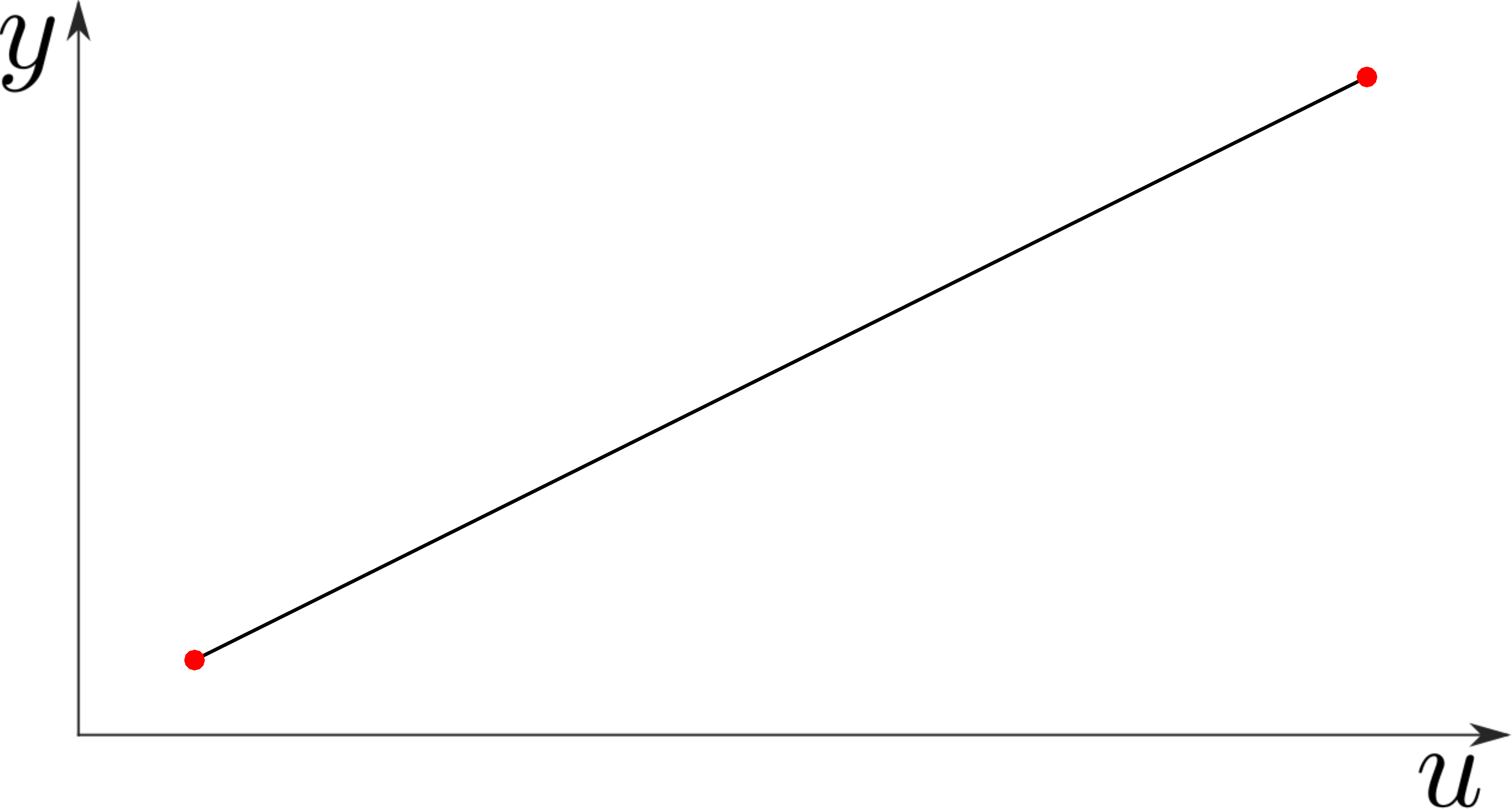}
		\caption{}
		\label{fig:NR2}
		%\end{minipage}
	\end{subfigure}
	\quad
	%\begin{subfigure}[b]{0.45\linewidth}
	%\centering
	\begin{subfigure}{0.32\textwidth}
		\includegraphics[width=\textwidth]{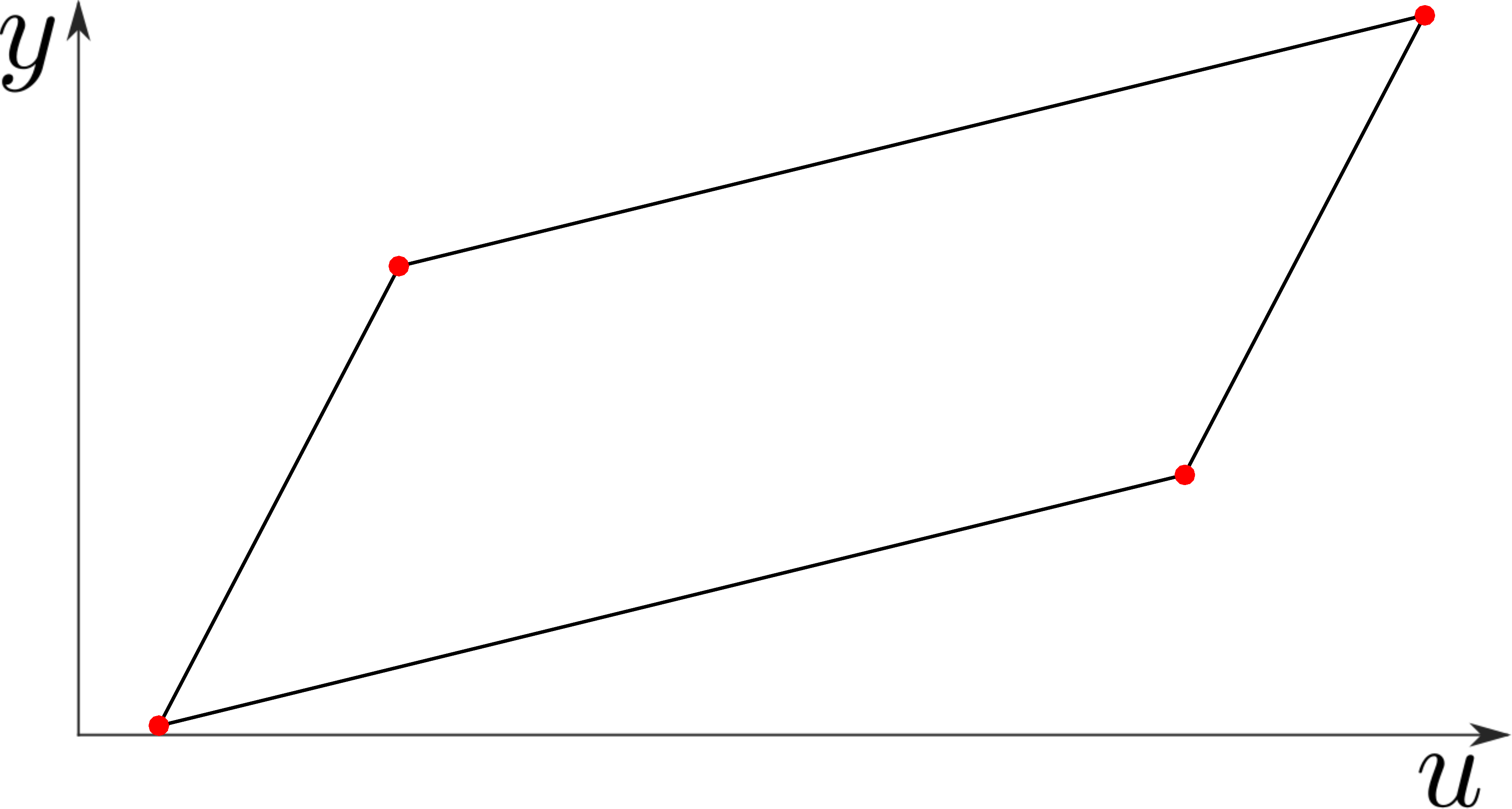}
		\caption{}\label{fig:NR3}
	\end{subfigure}
	%\end{subfigure}
	\begin{subfigure}{0.32\textwidth}
		% \begin{minipage}[b]{0.45\linewidth}
		%	\centering
		\includegraphics[width=\textwidth]{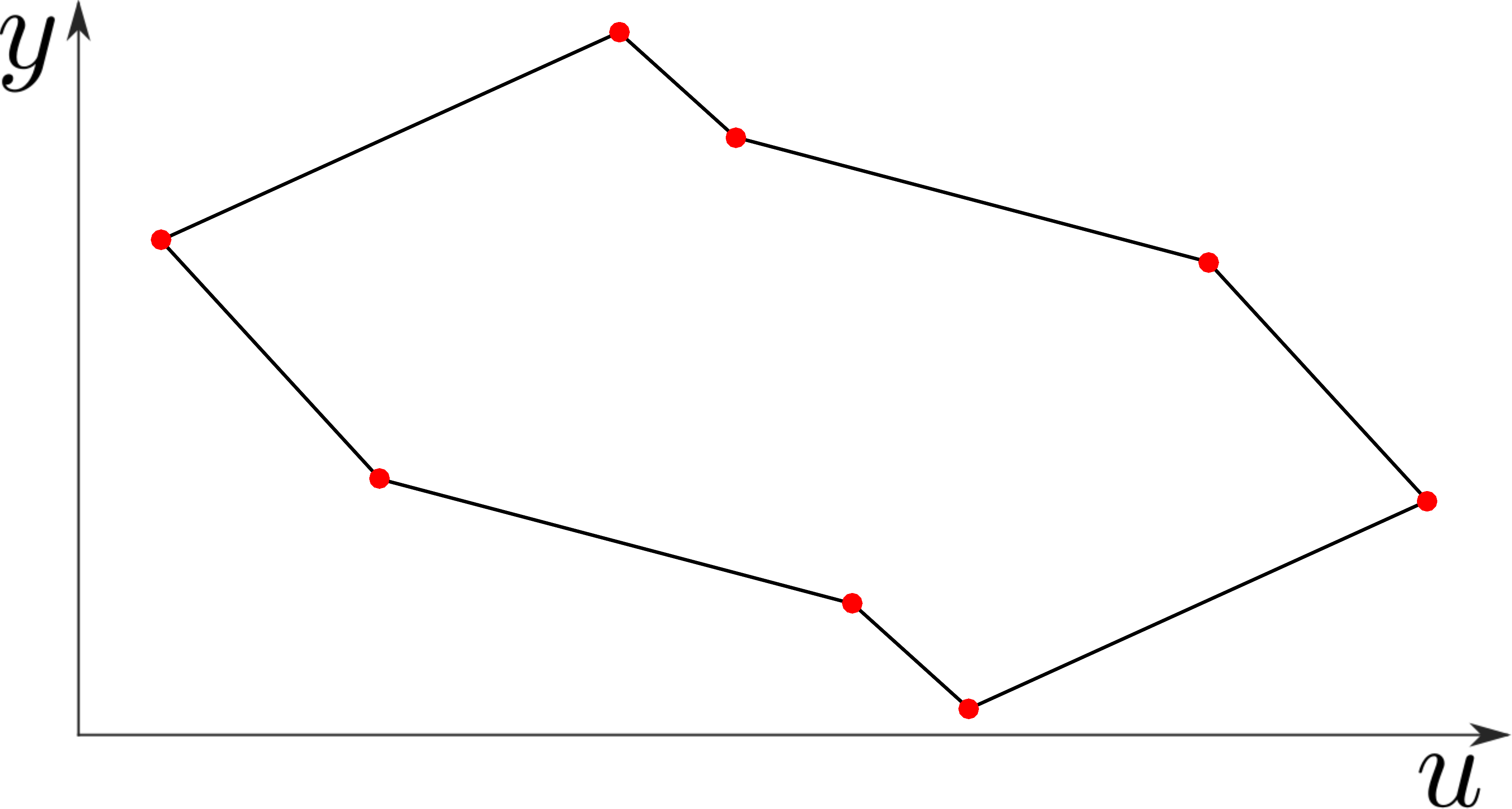}
		\caption{}\label{fig:NR4}
	\end{subfigure}
	
	%\begin{minipage}[b]{0.45\linewidth}
	%\centering
	\begin{subfigure}{0.45\textwidth}
		\includegraphics[width=\textwidth]{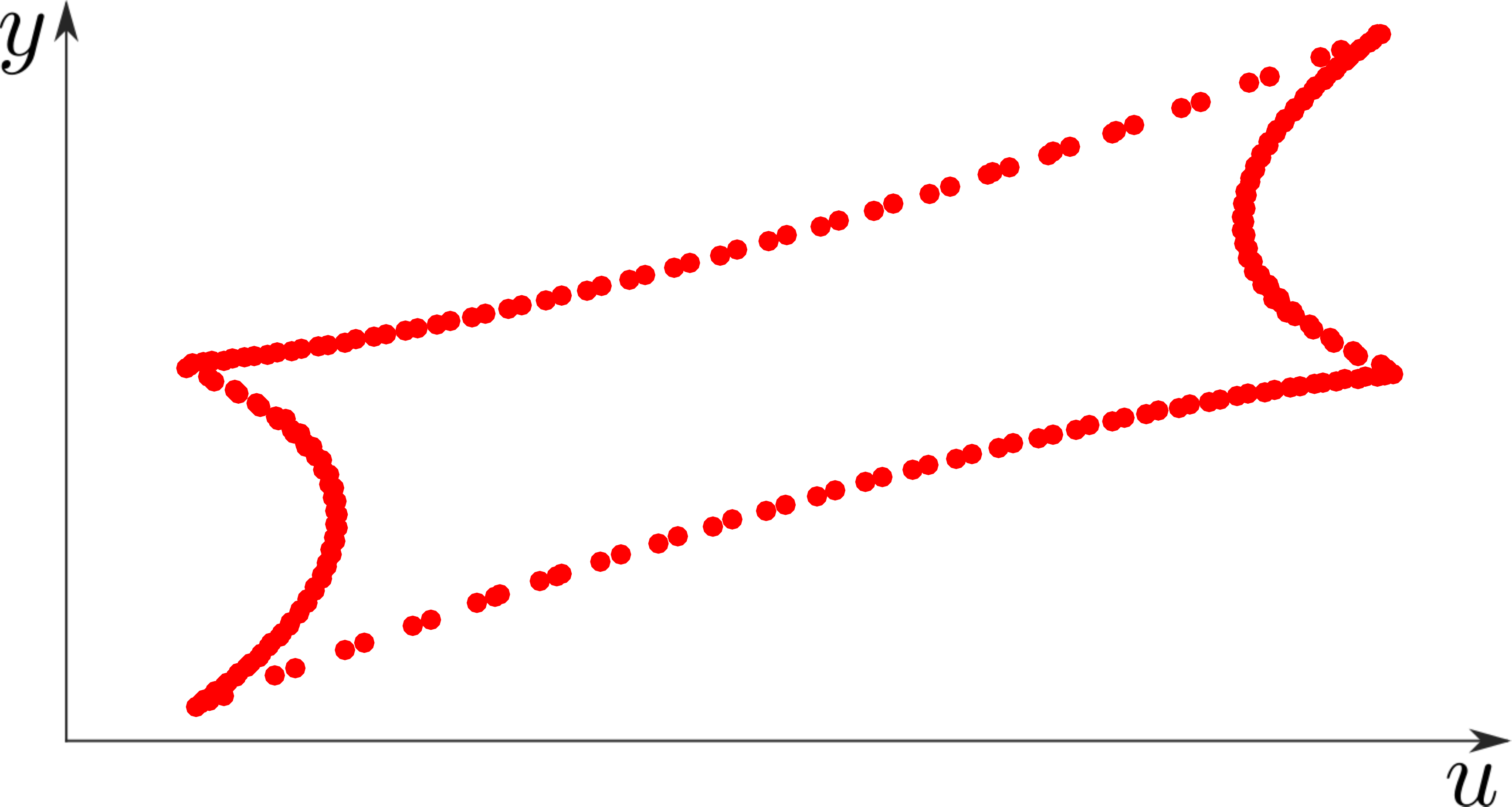}
		\caption{}\label{fig:NR5}
		%\end{minipage}
	\end{subfigure}
	%\end{subfigure}
	\quad
\begin{subfigure}{0.45\textwidth}
\includegraphics[width=\textwidth]{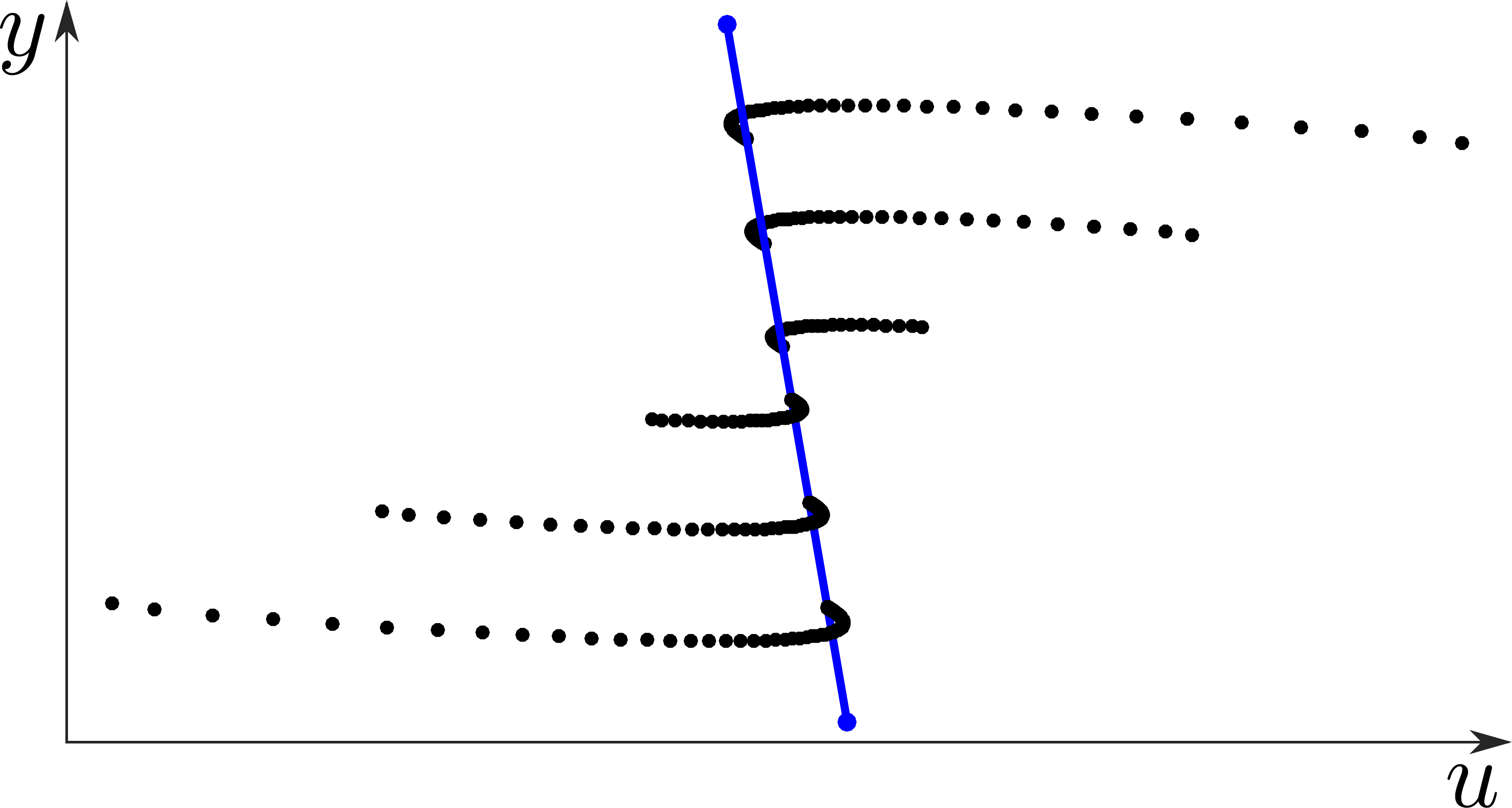}
\caption{}\label{fig:NR1}
\end{subfigure}
	\caption{Projections of various trajectories of system
	\eqref{11}--\eqref{play} onto $(u,y)$ plane. Parameter $\rho$ is fixed
	to be $1$, all the other parameters are given in the format  $a_1$,
	$b=(b_1,b_2)$, $c=(c_1,c_2,c_3)$.
	(a): $2$-periodic orbit  for
	$a_1=0.99$, $b=(0.76, 0.9)$, $c=(1.4, 9.7, 0.025)$.
	(b): $4$-periodic orbit for
	$a_1=0.7$, $b=(0.75, 0.5)$, $c=(4.8, 3.6, 3.45)$.
	(c): $8$-periodic orbit
	for
	$a_1=0.9$, $b=(0.73, 0.9)$, $c=(1.2,3.15, 1.3)$.
	(d): Quasi-periodic orbit
	for
	$a_1=0.7$, $b=(0.7, 0.55)$, $c=(4.8,
	4.15,
	3.8)$. (e): Trajectories converging to
	different points of  the segment of equilibrium points for $a_1=0.01$,
	$b=(0.01, 0.03)$, $c=(1, 6, 0.54)$.}\label{fig:NR6}
\end{figure}

The next natural step towards modeling the above effects in economics
would consist in formulation and analysis of closed models. With this motivation, let us consider
the following system, which belongs to the class of the so-called Dynamic
Stochastic General Equilibrium Models (DSGE) of macroeconomics (see, e.g.,
\cite{dsge6,dsge4,
%dsge7,
%dsge5,
dsge1,
dsge2}):
\begin{equation}\label{11}
\begin{cases}
y_{n+1}=%(1-a_1)
y_{n} - a_{1}(v_{n+1}-\sigma_{n+1})+\varepsilon_{n}, %+a_1 E_t y_{t+1}
\\
u_{n+1}=b_1 \sigma_{n+1} +(1-b_1) u_{n} +b_2 y_{n+1}+\eta_{n},\\
v_{n+1}=c_1 (u_{n+1}-u^*)+c_2 y_{n+1}+c_3 v_{n}+\xi_n,%
\end{cases}
\end{equation}
where $y_n$ is the output gap (or employment rate, or another measure of activity of the economy), $u_n$ is the rate of inflation, $v_n$ is the %nominal
interest rate, $\sigma_n$ is the aggregate of the economic agents' expectation of the future inflation rate, and
$\varepsilon_n$, $\eta_n$, $\xi_n$ are exogenous noise terms, see \cite{degrauwe}.
All the parameters are non-negative, %parameters $b_1, c_3$ satisfy $b_1, c_3<1$,
$b_1<1$, and the parameter $u^*$, the inflation target, is for convenience set to zero.

In order to close the model, we need to complement system (\ref{11}) with an equation
defining how the economic agents' expectation  of the future inflation rate $\sigma_n$ is related to the actual inflation rate $u_n$.
We assume that $\sigma_n$ is related to $u_n$ via the
{\em play} operator:
\begin{equation}\label{play}
\sigma_{n+1}=u_{n+1} -\rho s_{n+1},\qquad s_{n+1}=\Phi\left(s_{n}+\rho^{-1}(u_{n+1}-u_{n})\right),
\end{equation}
where $\Phi$ is function \eqref{eq1}, see Fig. \ref{fig:0d}. Note that the
sequence $s_n$ is the
output of the {\em stop}
operator with the input $\rho^{-1} u_n$, see Fig. \ref{fig:0c}.
The play operator introduces inertia, or stickiness (with the associated memory), into the response
 of aggregated agent's expectation of inflation to variations of the rate of inflation $u_n$.
The parameter $\rho>0$ measures the maximum deviation of the expected rate of inflation from the actual rate.

Let us consider the unperturbed system \eqref{11}, \eqref{play}, i.e., we set the noise terms  $\varepsilon_n$, $\eta_n$, $\xi_n$ to zero.
This autonomous system can be rewritten in the explicit form
\begin{equation}\label{aaa}
z_{n+1}=\Lambda z_{n} + A s_{n},\qquad s_{n+1}=\Phi\left(s_{n}+\rho^{-1}(u_{n+1}-u_{n})\right),
\end{equation}
where $z$ is the column vector $z=(y,u,v)^T$, $\Lambda$ is a $3\times 3$ matrix, and $A\in\mathbb{R}^3$.
Therefore, system \eqref{eq2} can be viewed as a simpler one-dimensional counterpart of system \eqref{11}, \eqref{play}.

Fig. \ref{fig:NR6} presents various attractors of model \eqref{11},
\eqref{play} obtained numerically for different parameter regimes. In
particular, trajectories can converge to stable equilibrium points that form a
segment in the phase space, see Fig.~\ref{fig:NR1}. Alternatively, one can
observe convergence to a $2$-periodic orbit, or to a periodic orbit of a
higher
period, which coexists with the set of unstable equilibrium points (see
Figs.~\ref{fig:NR2}--\ref{fig:NR4}). Fig.~\ref{fig:NR5} indicates a possibility
of quasiperiodic dynamics. Comparing these scenarios with different cases
of Theorem \ref{Theorem:1} suggests that the prototype model \eqref{eq2}
can help understand some features of dynamics of the more complex
macroeconomic model \eqref{11}, \eqref{play}. Analysis of the latter model
is beyond the scope of this paper and will be the subject of future work.

%\begin{figure}[ht]
%\centering
%\includegraphics[width=8cm, height=5cm]{1DFigureNS1.eps}
%\caption{Trajectories of $(y_t,p_t)$ converging to equilibrium points $(y^*,p^*)=(0.5323,-0.68627)$ }\label{fig:NR1}
%\end{figure}
%
%\clearpage

%Figures \ref{fig:NR1} and \ref{fig:NR6} were obtained numerically for the parameters $a=0.29$, $b_1=0.7$, $b_2=0.76$, $c_1=1.75$, $c_2=1.9$ and $c_3=0.85$. The segment of equilibrium points for these set of parameter values are $y^*\in[-0.92105   ,   0.92105]$ and $p^*\in[-1.1875 ,     1.1875]$.
%\clearpage

%\clearpage
%   & \subfloat[Period $4$ orbit of system \eqref{11}, \eqref{play} for parameters $a=0.7$, $b_1=0.75$, $b_2=0.5$, $c_1=4.8$, $c_2=3.6$, $c_3=3.45$, and $\rho=1$.]{\includegraphics[height=4cm]{1DFigureNS3.eps}}\\

%   & \subfloat[Quasi-periodic orbit of system \eqref{11}, \eqref{play} for parameters $a=0.7$, $b_1=0.7$, $b_2=0.55$, $c_1=4.8$, $c_2=4.15$, $c_3=3.8$, and $\rho=1$.]{\includegraphics[height=4cm]{1DFigureNS5.eps}\\

\section{Proofs}

%We will prove Theorem \ref{Theorem:1}, beginning from statement~\ref{item_b} and then going clockwise along the bifurcation diagram in Fig.~\ref{}. Thus,
%in Section \ref{Case:1} we prove case \ref{item_b};
%In Sections \ref{Case:2}, \ref{Case:3}, \ref{Case:5}, and \ref{Case:6}  case \ref{item_a};  in Section \ref{Case:4} case \ref{item_c}; in Section \ref{Case:7} case \ref{item_d}; in Section
% \ref{Case:8} case \ref{item_e}. Finally, in Sections \ref{Case:9} and \ref{Case:10}, we prove bifurcation cases \ref{item_f} and~\ref{item_g}, respectively. Theorem \ref{Theorem:2} is proved in Section \ref{Case:8}.

We will prove statements of Theorem \ref{Theorem:1} in the
counter-clockwise order along the bifurcation diagram in Fig.~\ref{fig:2b}.
Thus, we prove case \ref{item_b} in Section \ref{Case:1}, then case
\ref{item_a} for non-negative $\lambda$ in Section \ref{Case:2a}. Proofs
for  cases~\ref{item_c} and \ref{item_d}
are presented in Sections \ref{Case:4} and Section \ref{Case:7}, respectively.
In Section \ref{Case:2b}, we present the proof of the
remaining part of
case \ref{item_a}  for negative $\lambda$. Proofs of case
\ref{item_e} and of Theorem \ref{Theorem:2} are presented in Section
 \ref{Case:8}. Finally, Sections \ref{Case:9} and \ref{Case:10} contain
 the proofs for critical cases \ref{item_f} and~\ref{item_g}, respectively.

{\color{black} We use the following notations: $A_x$ and $A_s$ will denote
the $x$
	and $s$ coordinates of a point $A$ in the $(x,s)$-plain.
	Transformation \eqref{eq2} will be
	denoted by $f$.  Throughout the proofs, we will use the variable
	$p=x-s$ (output of
	the play operator, see Fig. \ref{fig:0d}).
	We will denote by $A_p=A_x-A_s$ the $p$-coordinate of a point $A$.}

{ \color{black} Let us start with a few preliminary remarks. First, due to the
fact that
\begin{equation}
\label{eq:f_odd}
f(-x,-s)=-f(x,s),
\end{equation} it is sufficient to present the proofs for
a half of the phase space.
%For definiteness we will always consider the orbits which start from the left of the segment $EF$.}

 \begin{lemma}
 	\label{lm:mon_EF}
 	{\color{black} For any point $A$ to the left of the
 		segment $EF$, one has $[f(A)]_x>A_x$. For any point
 		$B$ to the right of the segment $EF$, one has $[f(B)]_x<B_x$.}
 \end{lemma}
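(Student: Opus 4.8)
The plan is to reduce the claim to a one-line sign computation, exploiting that the $x$-component of the map $f$ is linear and completely decoupled from the stop nonlinearity $\Phi$. First I would read off from the first equation of \eqref{eq2} that $[f(A)]_x=\lambda A_x+a A_s$ for any point $A$, so that
$$
[f(A)]_x-A_x=a A_s-(1-\lambda)A_x.
$$
Thus the sign of $[f(A)]_x-A_x$ is governed by the linear functional $\ell(x,s):=(1-\lambda)x-a s$, namely $[f(A)]_x-A_x=-\ell(A)$.

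Next I would pin down the geometric meaning of ``left'' and ``right'' and relate it to $\ell$. By \eqref{eq3}, the segment $EF$ consists, at each height $s\in[-1,1]$, of the single point with $x$-coordinate $a s/(1-\lambda)$, and this is exactly the zero set of $\ell$ inside the strip $L$. Since $|\lambda|<1$ gives $1-\lambda>0$, the functional $\ell$ is strictly increasing in $x$ at fixed $s$; hence a point $A$ lying to the left of $EF$ (smaller $x$ at height $A_s$) satisfies $\ell(A)<0$, while a point $B$ to the right satisfies $\ell(B)>0$. Substituting into the identity above yields $[f(A)]_x-A_x=-\ell(A)>0$ and $[f(B)]_x-B_x=-\ell(B)<0$, which is the assertion.

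There is no genuine obstacle here: the only points requiring care are fixing the orientation convention for ``left'' versus ``right'' and tracking that $1-\lambda>0$, which is precisely where the standing hypothesis $|\lambda|<1$ enters. In particular, the clipping map $\Phi$ plays no role in this lemma, since the update of the $x$-coordinate never invokes it; the content is entirely a statement about the scalar affine recursion $x\mapsto\lambda x+a s$ and the geometry of its fixed locus.
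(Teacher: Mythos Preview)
Your proof is correct and follows essentially the same approach as the paper: both reduce to the observation that $[f(A)]_x-A_x=aA_s-(1-\lambda)A_x$, and that lying to the left of $EF$ means $(1-\lambda)A_x<aA_s$ (using $1-\lambda>0$). The only cosmetic difference is that the paper deduces the second statement from the oddness relation $f(-x,-s)=-f(x,s)$, whereas you handle both halves directly via the sign of your functional $\ell$; either route is immediate.
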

 \begin{proof}{\color{black}
 	Since  $A$ lies to the left of the segment $EF$, one has
 	$(1-\lambda) A_x<a A_s$. Thus $[f(A)]_x=\lambda A_x+a A_s>\lambda
 	A_x+(1-\lambda)A_x=A_x$. The second statement follows from
 	\eqref{eq:f_odd}.}
 \end{proof}

\begin{lemma}
	\label{lm:mon_bet}
	{\color{black} Let $\beta>0$. Then for any two points $A$ and $B$ with
	the
	same $p$-coordinate $A_p=B_p$, from $A_x>B_x$ it follows
	$[f(A)]_x>[f(B)]_x$.}
	\end{lemma}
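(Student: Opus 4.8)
The plan is to exploit the fact that the $x$-update in \eqref{eq2} is purely linear and does not involve the stop nonlinearity $\Phi$ at all, so that $[f(\cdot)]_x$ is an affine function of the coordinates and the difference $[f(A)]_x-[f(B)]_x$ can be computed directly. The one structural ingredient I would use is the hypothesis $A_p=B_p$. Since $A_p=A_x-A_s$ and $B_p=B_x-B_s$, this equality is equivalent to
$$A_s-B_s=A_x-B_x;$$
in words, two points sharing a $p$-coordinate differ by the same amount in their $x$- and $s$-coordinates.

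With this in hand I would simply subtract. From the first equation of \eqref{eq2}, $[f(A)]_x=\lambda A_x+a A_s$ and $[f(B)]_x=\lambda B_x+a B_s$, so
$$[f(A)]_x-[f(B)]_x=\lambda(A_x-B_x)+a(A_s-B_s).$$
Substituting the relation $A_s-B_s=A_x-B_x$ collapses the right-hand side into a single factor,
$$[f(A)]_x-[f(B)]_x=(\lambda+a)(A_x-B_x)=\beta\,(A_x-B_x).$$
Since $\beta>0$ by assumption and $A_x-B_x>0$ by hypothesis, the product is positive, which yields $[f(A)]_x>[f(B)]_x$ and finishes the argument.

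I expect essentially no obstacle here: the lemma reduces to a one-line linear computation once the constraint $A_p=B_p$ is rewritten as $A_s-B_s=A_x-B_x$. The only point worth flagging is that the stop nonlinearity is irrelevant to the statement, because $\Phi$ enters only the $s$-update and never the $x$-update; the content is therefore purely about the linear map $(x,s)\mapsto \lambda x+a s$ restricted to a line of constant $p$, along which it acts as multiplication by $\beta$. This monotonicity-along-$p$-fibers property is exactly the kind of comparison tool one would want for later arguments tracking the relative order of nearby trajectories, which is presumably why it is isolated as a separate lemma.
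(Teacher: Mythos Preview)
Your proof is correct and essentially identical to the paper's: the paper simply rewrites $[f(A)]_x=\lambda A_x+aA_s=\beta A_x-aA_p$, which for fixed $A_p$ is increasing in $A_x$ when $\beta>0$. Your subtraction argument is the same computation phrased as a difference rather than as a one-point formula.
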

\begin{proof}
	{ \color{black}  It suffices to note that
		$
		[f(A)]_x=\lambda A_x+a A_s= \beta A_x-aA_p.
		$}
\end{proof}

%Furthermore, if $s_n=1$ and $x_n<x^*=\frac{a}{1-\lambda}$, then
%\[x_{n+1}=\lambda x_n+a<\frac{\lambda a}{1-\lambda}+a=\frac{a}{1-\lambda}=x^*.\]
%Now we consider the cases where $\lambda+a>1$ and $0<\lambda+a<1$ separately.
%\clearpage

Denote by $\Pi\subset L$ the parallelogram
with the diagonal $EF$, two sides on the lines $s=\pm1$, and two sides
with slope $1$:
\begin{equation}\label{PI}
\Pi=\left\{(x,s):\ \left| x-s\right|\le \left|\frac{a}{1-\lambda}-1\right|,\ |s|\le 1 \right\}.
\end{equation}

\begin{lemma}
	\label{lm:Pi_par}
For $0\le \lambda<1$, $\beta\le 0$ and for $-1<\lambda\le 0$, $-1\le \beta\le 0$ the
parallelogram
	$\Pi$ is invariant under the map
	\eqref{eq2}.
\end{lemma}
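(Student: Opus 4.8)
The plan is to pass to the \emph{play} coordinate $p=x-s$, in which $\Pi$ becomes the axis-parallel rectangle $\{(p,s):\ |p|\le M,\ |s|\le1\}$ with half-width $M=|E_p|=\left|\frac{a}{1-\lambda}-1\right|=\frac{1-\beta}{1-\lambda}$, the last equality using $\beta\le0$ (which holds in both parameter ranges of the lemma). Since $\Phi$ always takes values in $[-1,1]$, the constraint $|s_{n+1}|\le1$ is automatic, so the entire lemma reduces to proving that $|p_{n+1}|\le M$ whenever $|p_n|\le M$ and $|s_n|\le1$. First I would record two elementary identities obtained by substituting $x=p+s$ into \eqref{eq2}: writing $x'=[f(x,s)]_x$, one has $x'=\lambda p+\beta s$, and the argument of $\Phi$ equals $\tau:=s+(x'-x)=x'-p$. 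These split the analysis into the three branches of $\Phi$.

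The elastic branch $|\tau|\le1$ is immediate: then $s'=\tau=x'-p$, so $p'=x'-s'=p$ and hence $|p'|=|p|\le M$. The two plastic branches are interchanged by the central symmetry \eqref{eq:f_odd}, under which $\Pi$ is invariant, so it suffices to treat $\tau>1$, where $s'=1$ and $p'=x'-1=\lambda p+\beta s-1$. Here the lower bound is free: from $\tau=x'-p>1$ we get $p'=x'-1>p\ge -M$. Thus the only genuine estimate is the upper bound $p'\le M$.

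For the upper bound I would maximize the linear function $\lambda p+\beta s$ over the rectangle, giving $p'\le|\lambda|M+|\beta|-1$, and then reduce $|\lambda|M+|\beta|-1\le M$ to the inequality $|\beta|-1\le(1-|\lambda|)M$. This single inequality is the crux of the proof, and it is exactly the point where the two parameter regimes diverge. The main (and essentially only) obstacle is verifying it in the second regime: for $0\le\lambda<1$ one has $(1-|\lambda|)M=1-\beta=1+|\beta|$, so the inequality reads $-1\le1$ and holds for every $\beta\le0$, which explains why case one imposes no lower bound on $\beta$; for $-1<\lambda\le0$, after clearing the positive denominator $1-\lambda$ the inequality is equivalent to $\beta\lambda\le1$, and since $|\beta|\le1$ and $|\lambda|<1$ this holds precisely on the stated range $-1\le\beta\le0$. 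Combining the three branches yields $|p_{n+1}|\le M$ in all cases, which together with $|s_{n+1}|\le1$ establishes the invariance of $\Pi$.
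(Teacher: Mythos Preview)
Your proof is correct and follows essentially the same route as the paper's: both rewrite $x_{n+1}=\lambda p_n+\beta s_n$, bound this linearly over the rectangle $|p_n|\le M$, $|s_n|\le1$ to get $|x_{n+1}|\le|\lambda|M+|\beta|$, and then verify this is at most $M+1$ --- which is immediate for $\lambda\ge0$ and reduces to $\lambda\beta\le1$ for $\lambda<0$. Your version is in fact a bit more explicit than the paper's, since you spell out the elastic/plastic branch split and the ``free'' lower bound in the plastic branch, whereas the paper states the reduction to $|x_{n+1}|\le E'_x$ without justification.
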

\begin{proof}
	Let $(x_n,s_n)\in\Pi$, i.e. $|p_n|\le\frac{1-\beta}{1-\lambda}$.
Since the upper-right and the lower-left vertices of $\Pi$ are the points $E'=\left(-\frac{a}{1-\lambda}+2,1\right)$ and
$F'=\left(\frac{a}{1-\lambda}-2,-1\right)$, respectively, it suffices to prove that
\begin{equation}\label{***}
|x_{n+1}|\le -\frac{a}{1-\lambda}+2.
\end{equation}
If $0\le \lambda<1$, $\beta\le 0$, then
\[
|x_{n+1}|\le \lambda \,\frac{1-\beta}{1-\lambda}-\beta=-\frac{a}{1-\lambda},
\]
which implies \eqref{***}. If $-1<\lambda\le 0$, $-1\le \beta\le 0$, then
\[
|x_{n+1}|\le -\lambda \,\frac{1-\beta}{1-\lambda}-\beta,
\]
which yields \eqref{***} because $\lambda<1$ and $\lambda\beta\le 1$.
\end{proof}

\subsection{Case \ref{item_b}}\label{Case:1}
In this case, $\lambda\ge 0$ and $\beta\geq1$. Therefore $a>0$ and
the slope of the segment $EF$ of equilibrium points is positive and less
than or equal to $1$ as shown in Fig.~\ref{fig:2d}.

\begin{figure}[hbt]
	\begin{center}
		\includegraphics[width=0.48\textwidth]{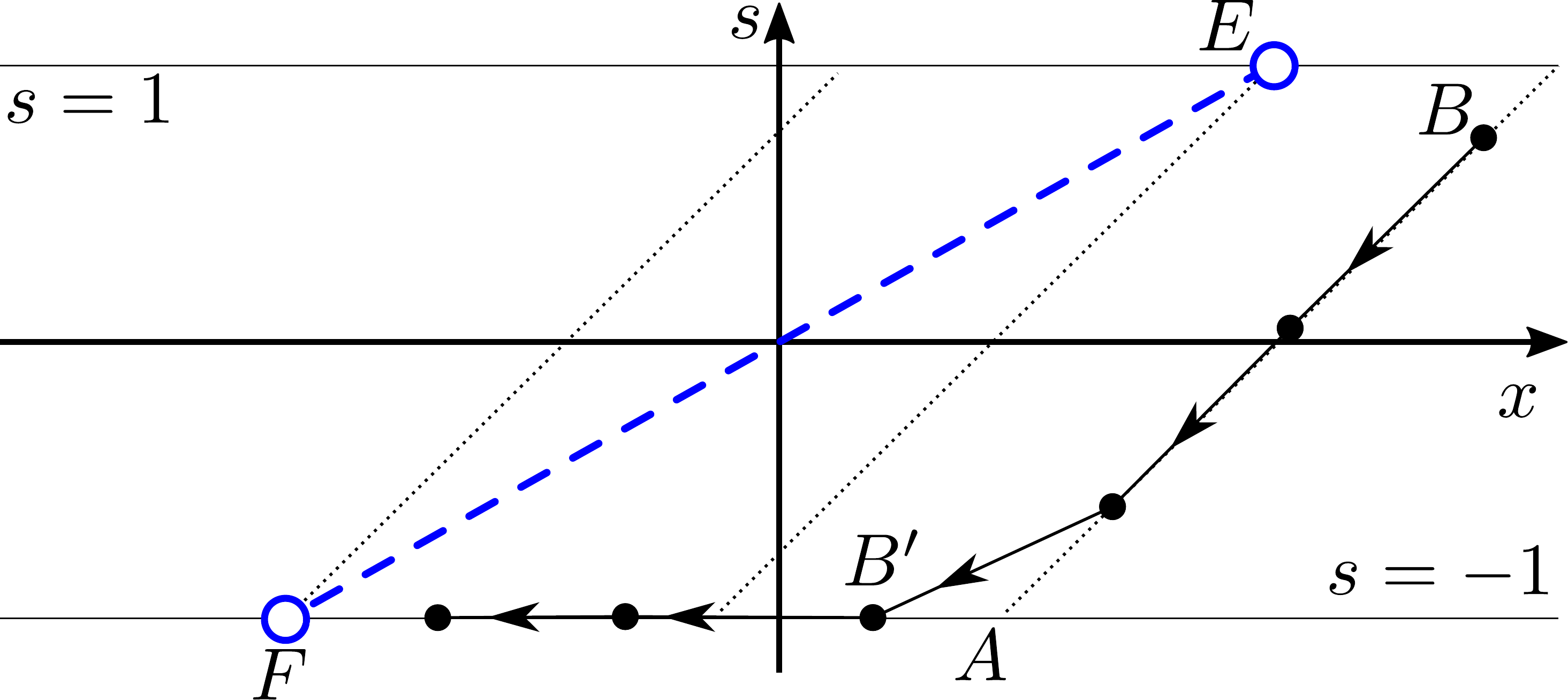}
		\caption{$\lambda\ge 0$, $\beta \ge 1$. A trajectory starting at
			a
			point $B$
			to
			the right of the
			segment $EF$ converges to the
			fixed point $F$. Dotted lines have slope $1$.}  \label{fig:2d}
	\end{center}
\end{figure}

Consider a point $B$ which lies to the right of the segment
$EF$. Denote by $A$ the intersection point of the lines $p=B_p$ and
$s=-1$. Let $B'$ denote the point at which the trajectory $\{f^n(B)\}$
hits
the line $s=-1$ for the first time.
From Lemma \ref{lm:mon_EF} it follows that $[f^{-1}(B')]_x>A_x$, and
$[f(A)]_x>F_x$ since $\lambda>0$. Thus, from Lemma
\ref{lm:mon_bet} we obtain $B'_x>F_x$.
Since $B'_s=-1$, it follows that $[f(B')]_x-F_x=\lambda (B'_x-F_x)$. Hence,
due to $\lambda\in [0,1)$, the trajectory converges to the equilibrium $F$
along
the line $s=-1$ (see Fig.~\ref{fig:2d}). We conclude that every trajectory
that
starts to
the right of the segment $EF$ of  equilibrium points, converges to $F$.
Every trajectory which starts to the left of $EF$ converges to $E$ due to
\eqref{eq:f_odd}.

\subsection{Case \ref{item_a}, $\lambda\ge 0$}\label{Case:2a}
\subsubsection{$0<\beta<1$}\label{Case:2}

In this case,
the segment $EF$ has a positive slope greater than $1$ if $a>0$
and nonpositive if $a\leq0$.

\paragraph*{1}
 First, let us consider  the trajectory of a point $A$ that belongs to the parallelogram $\Pi$ defined by \eqref{PI}, see Fig.~\ref{fig:b+}. Denote by $P^*$ the point of intersection of the line
$p=A_p$ with the
segment
$EF$ of equilibrium points.
Since
$$ [f(A)]_p=A_p,\quad \quad [f(A)]_x-P_x^*=\beta(A_x-P_x^*)$$
with $\beta\in(0,1)$, the trajectory of $A$ converges to the point $P^*$.
%and from $x_n<x^*$ it follows that $x_{n+1}<x^*$.

\paragraph*{2} Thanks to Lemma \ref{lm:mon_bet}, all the other
trajectories that start to
the right of the parallelogram $\Pi$, move down along the line
$p=const$ until they hit the line $s=-1$ and then monotonically
converge to the equilibrium point $F$ along this line from the
right, see Fig.~\ref{fig:2e}.

\begin{figure}[h]
\centering
\begin{subfigure}[t]{0.48\textwidth}
\includegraphics[width=\textwidth]{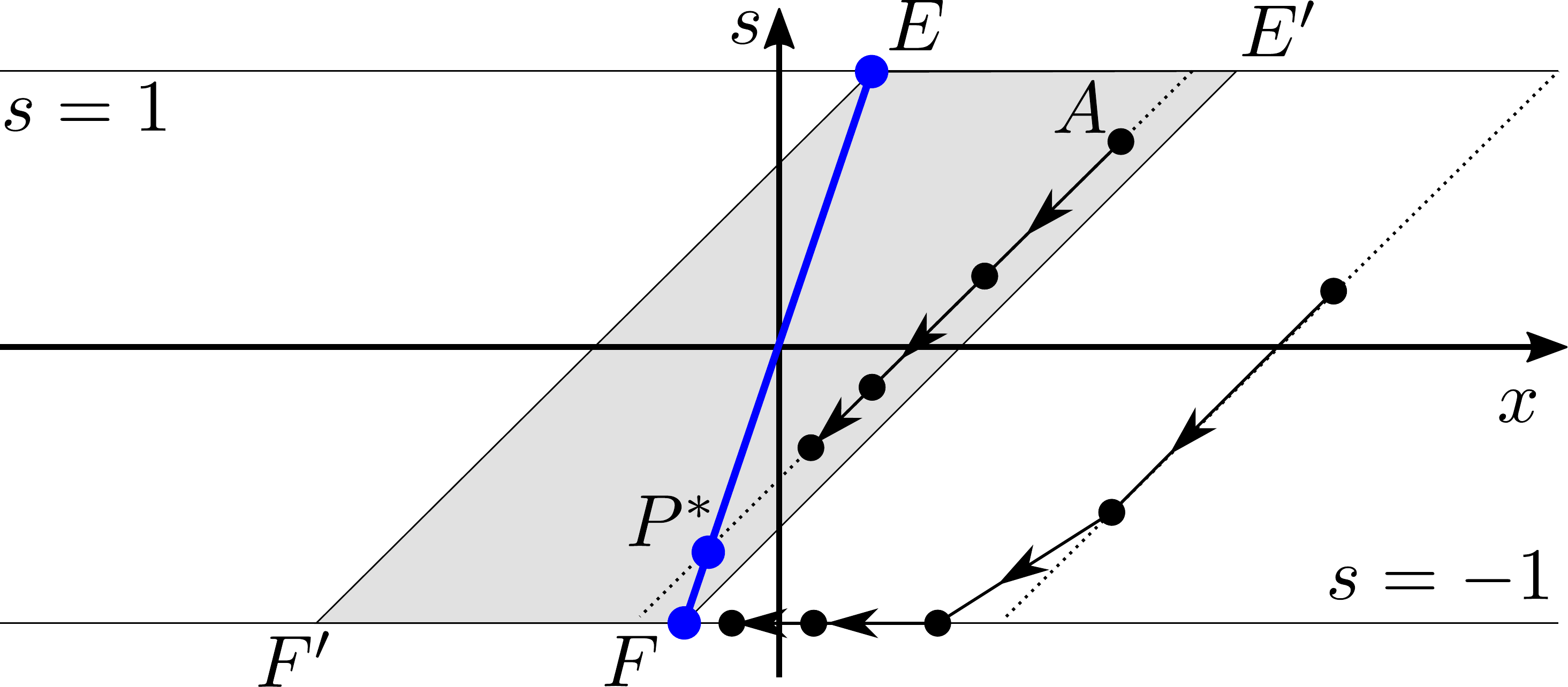}
\caption{$0<\beta<1$.
	%A trajectory that
	%starts in the parallelogram $\Pi=EE'FF'$ converges
	%to a fixed point $(x^*_p,s^*_p)$ along the line $p=const$. A trajectory
	%that
	%starts to the right of $\Pi$ converges to the equilibrium point $F$.
	}
\label{fig:2e}
\end{subfigure}
\begin{subfigure}[t]{0.48\textwidth}
	\includegraphics[width=\textwidth]{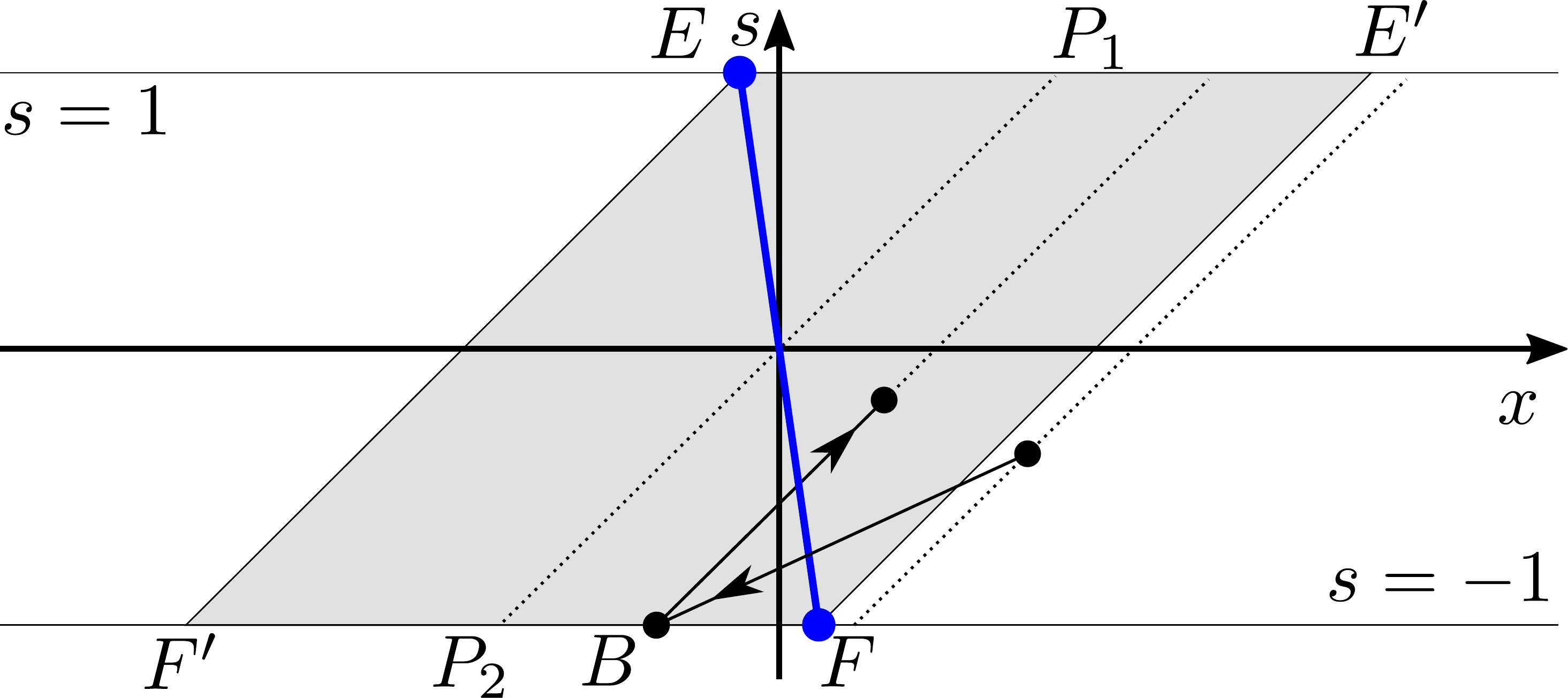}
\caption{$-1<\beta\leq0$.}
\label{fig:2f}
\end{subfigure}
\caption{Theorem \ref{Theorem:1}\ref{item_a}. Case $\lambda \ge0$.
Dotted lines
have slope $1$. The shaded area is the parallelogram $\Pi=EE'FF'$.}
\label{fig:b+}
\end{figure}

%\subsubsection{Case $\lambda\ge 0$, $0<\lambda+a<1$ and $a\le 0$}\label{Sub-Case:1.3}
%The segment $EF$ of equilibrium points and the parallelogram $\Pi$ for this case are shown in Fig.~\ref{fig:3}. The behavior of trajectories is the same as in the previous case, which can be shown by the same argument.
%
%This proves statement (a) of Theorem \ref{Theorem:1} for $\lambda\ge 0$, $0<\lambda+a<1$, and statement (b) for $\lambda+a>1$.
%In the case $\lambda+a=1$, the parallelogram $\Pi$ degenerates into the segment $EF$, and statement (b) follows trivially.

\subsubsection{ $-1<\beta\le
0$}\label{Case:3}
In this case, $a<0$ and the
segment $EF$ has a negative slope as in Fig.~\ref{fig:2f}.

\paragraph*{1} If a trajectory
starts to the right of the parallelogram $\Pi$, then, since $\beta\leq 0$, it
hits the line $s=-1$ after one iteration. If it hits the line to the right of the
equilibrium
$F$, then the trajectory converges to this equilibrium along the line $s=-1$
from the right due to $\lambda\ge0$.
% The convergence is described by the equation,%$x_{n+1}+x^*=\lambda(x_n+x^*).$
On the other hand, if this trajectory hits the line $s=-1$ at a point
$B$ to the left of the point $F$, then $B$ belongs to the
parallelogram $\Pi$. In order to show this, we note that for the %initial
previous point
%of the trajectory
$f^{-1}(B)=(x_n,s_n)$, we have
\[
x_n-s_n>-\frac{a}{1-\lambda}+1,
\]
because the point $(x_n,s_n)$  lies to the right of the parallelogram $\Pi$.
Therefore,
\begin{eqnarray}
x_{n+1}&=&\lambda x_n+ as_n %\nonumber\\
> \lambda\left(-\frac{a}{1-\lambda}+1+s_n\right)+as_n\nonumber\\
%%%&=&\lambda\left(-\frac{a}{1-\lambda}+1\right) +\betas_n
%%%\nonumber\\
&\ge &\lambda\left(-\frac{a}{1-\lambda}+1\right)+\lambda+a. \nonumber
%%%&=&
%=\frac{\lambda}{1-\lambda}+\frac{(1-2\lambda)\beta}{1-\lambda}.\nonumber
\end{eqnarray}
This last expression is %quantitatively
greater than
$F'_x= \frac{a}{1-\lambda}-2$, which is the $x$-coordinate
of the lower left vertex of the parallelogram $\Pi$.

\paragraph*{2} Consider  points on the horizontal sides of $\Pi$. To
be definite,
assume that $s_n=-1$. Denote by $P_{1}=(1,1)$ and $P_{2}=(-1,-1)$ the
middle points of $EE'$ and $FF'$, respectively. If $(x_n,s_n)\in P_{2}F$, then
the trajectory converges to the equilibrium along the line $p=const$. Let
$(x_n, s_n)\in F'P_2$. If $s_{n+1}<1$, then the
trajectory converges to the equilibrium along the line $p=const$. If $s_{n+1}=1$, then $(x_{n+1},s_{n+1})\in EP_{1}$ since $x_{n+1}=\lambda x_{n}-a\leq-\lambda-a<1$. Again, the trajectory converges to the equilibrium along the line $p=const$.

% Indeed, the inequality
%\[
%\frac{\lambda}{1-\lambda}+\frac{(1-2\lambda)\beta}{1-\lambda}>\frac{a}{1-\lambda}-2
%\]
%is equivalent to the inequality $1>\lambda\beta$, which is true because
%$\lambda\ge 0$ and $\lambda+a\le 0$.
%This proves $x_{n+1}>x_{F'}$.
%
%Next, we consider points on the horizontal sides of $\Pi$. To be definite, assume that $s_n=-1$. If %such a point does not hit the line $s=1$ after one iteration,
%$s_{n+1}<1$, then the trajectory satisfies $-1<s_{n+k}<1$, $p_{n+k}=const$ for all the further iterations, which
%converge to the equilibrium along the line $p=const$. If %the point jumps from $s=-1$ to $s=1$,
% $s_{n+1}=1$, then from $x_n\leq-x^*=-{a}/({1-\lambda})$,  it follows that
%\[x_{n+1}=\lambda x_n -a \leq -\lambda x^* -a=-x^*\] and therefore $(x_{n+1},s_{n+1})\in\Pi$, see Fig.~\ref{fig:3}.
%We conclude that either the trajectory converges to an equilibrium point or the $s$-component of the trajectory satisfies
%$|s_n|=1$ and switches the sign at every time step for all sufficiently large $n$. However, the latter scenario by compactness argument (the trajectory belongs to the parallelogram) would imply that there is a period 2 trajectory in $\Pi$. As there is no period 2 trajectory for the parameter regime that we are considering, this scenario is impossible.

It remains to consider points in $\Pi$ that belong to the open band $|s|<1$. A trajectory starting from such a point either converges to an equilibrium along the line $p=const$ without hitting the lines $s=\pm1$, or hits one of these lines and then converges to an equilibrium as discussed above.

%Statement \ref{item_b} of Theorem \ref{Theorem:1} is proved.
%\clearpage

\subsection{Case \ref{item_c}}\label{Case:4}

As in the previous case,
$a<0$ and the segment $EF$ has a negative slope (see Fig.~\ref{fig:2g}). In
this case, there is a $2$-periodic orbit. A $2$-periodic orbit consists of the
points
$\pm Q$ where $Q=(Q_x,1)$.
%, that is, one point is $(x_o,-1)$ and the other point is $(x_1,1)$.
Here, $Q_x=-\lambda Q_x-a$, therefore
\[Q_x=-\frac{a}{1+\lambda}.\]
Since $\beta<-1$, it follows that $Q_x>1$, so the distance between the
$x$-components of $Q$
and $-Q$ is larger than $2$ and thus periodic points indeed belong to the
lines
$s=\pm 1$.

\paragraph*{1} Now we consider dynamics of different trajectories. Denote
\begin{equation}
\label{eq:AB}
A=\left(\frac{a+2}{1-\lambda},1\right), \quad
B=\left(\frac{1}{\lambda}\left(-\frac{a+2}{1-\lambda}-a\right),1\right).
\end{equation}
 If $\lambda=0$, we formally set
$B_x=\infty$ and replace the segment $AB$ below  by the corresponding
half-line. Note that $B_x>A_x$ and $Q\in AB$.

\begin{lemma}
	\label{lm:AB}
	 Under the hypothesis of Theorem \ref{Theorem:1} \ref{item_c},
	the segment  $AB$ is
invariant under the second iteration $f^2$ of the map $f$ and $f^2$ is a
contraction on $AB$.
\end{lemma}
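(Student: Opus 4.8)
The plan is to exploit that the segment $AB$ lies on the line $s=1$, so a point of $AB$ is determined by its $x$-coordinate, and to reduce $f^2|_{AB}$ to a single affine branch of the piecewise-linear map. First I would compute one iteration of $f$ at a point $(x,1)\in AB$: here $[f(x,1)]_x=\lambda x+a$ and the new state equals $\Phi\bigl(1+(\lambda-1)x+a\bigr)$. The defining value $A_x=\frac{a+2}{1-\lambda}$ is precisely the threshold at which this argument equals $-1$; since $\lambda<1$, for every $x\ge A_x$ the argument is $\le -1$, so the stop saturates and $f(x,1)=(\lambda x+a,-1)$. Thus the whole segment $AB$ is mapped onto the line $s=-1$ in one step.

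Next I would iterate once more from $(\lambda x+a,-1)$. Its image has $x$-coordinate $\lambda(\lambda x+a)-a=\lambda^2 x+a(\lambda-1)$ and state $\Phi\bigl(-1+(\lambda-1)(\lambda x+a)-a\bigr)$. The defining value $B_x$ satisfies $\lambda B_x+a=-A_x$, which is exactly the threshold making this argument equal to $+1$; hence for every $x\le B_x$ the stop saturates at $+1$ and the orbit returns to the line $s=1$. Combining the two steps, $f^2$ acts on $AB$ through the scalar affine map $g(x)=\lambda^2 x+a(\lambda-1)$ on the $x$-coordinate. Its slope is $\lambda^2$, and $|\lambda|<1$ gives $\lambda^2<1$, so $g$ is a contraction; solving $g(x)=x$ yields $x=-\frac{a}{1+\lambda}=Q_x$, so the fixed point of $g$ is exactly the periodic point $Q$.

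Invariance then follows because $g$ is increasing with contraction factor $\lambda^2\in[0,1)$ and its fixed point $Q_x$ lies strictly inside $(A_x,B_x)$ (already noted before the lemma). Quantitatively, a short computation gives $g(A_x)-A_x=-2(\beta+1)>0$ using $\beta<-1$, and symmetrically $g(B_x)<B_x$, so $A_x<g(A_x)<g(B_x)<B_x$ and $g$ maps $[A_x,B_x]$ into itself. I expect the main obstacle to be the bookkeeping that pins $A$ and $B$ as the exact saturation thresholds of $\Phi$ in the two successive steps: this is what guarantees that both iterations hit the boundaries $s=-1$ and $s=1$, so the piecewise map collapses to the single affine branch $g$; once that is in place, contraction and invariance are immediate. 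The degenerate case $\lambda=0$ (where $B_x=\infty$ and $AB$ becomes a half-line) should be noted separately, since there $g\equiv -a=Q_x$ is the constant map onto $Q$, trivially a contraction with invariant image.
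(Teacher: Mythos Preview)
Your argument is correct and follows essentially the same route as the paper: both show that one application of $f$ sends $AB$ into the line $s=-1$ (to the left of $-A$), a second application sends it back to $s=1$, so that $f^2$ on $AB$ reduces to the affine map $g(x)=\lambda^2 x+a(\lambda-1)$, and then both verify $g(A_x)>A_x$, $g(B_x)<B_x$ together with $0\le\lambda^2<1$ to conclude invariance and contraction. Your emphasis on $A_x$ and $B_x$ as the exact saturation thresholds of $\Phi$ is a clean way to organize the two steps, and your explicit handling of the degenerate case $\lambda=0$ is a nice addition; the only point to tighten is the word ``symmetrically'' for $g(B_x)<B_x$, which really follows either from the analogous one-line computation $B_x-g(B_x)=-2(\beta+1)/\lambda>0$ or immediately from the fact that $g$ is an increasing contraction with fixed point $Q_x\in(A_x,B_x)$.
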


\begin{proof}
	First, we note that if a point $(x_n,s_n)$ lies on the line $s=1$ to the
	right of the point~$A$, then the image $(x_{n+1},s_{n+1})=f(x_n, s_n)$ of
	this
	point under the map \eqref{eq2} belongs to the line $s=-1$.
Indeed,
\[x_{n+1}-x_n=\lambda x_n+a-x_n\leq(\lambda-1)A_x +a=-2,\]
%\clearpage
which implies $s_{n+1}=-1$. By \eqref{eq:f_odd}, the points of the line
$s=-1$ lying to the left of $-A$
are mapped to the line $s=1$.
Furthermore, if a point $(x_n,s_n)$ lies on the line $s=1$ between the points $A$ and $B$,
then its image $f(x_n,s_n)=(\lambda x_n+a,s_{n+1})$ lies on the line $s=-1$ to the left of the point $-A$, and therefore the second iteration $f^{2}(x_n,s_n)$ belongs to the line $s=1$.
Hence, the segment $AB$ is mapped by the second iteration $f^{2}$ to the line $s=1$.
%Let us show that the segment $AB$ is actually invariant under $f^{2}$.
If $(x_n,1)\in AB$, then $f^{2}(x_n,1)=(\lambda^2x_n+\lambda a-a,1)$. In
particular, $[f^{2}(A)]_x$ is defined by the expression
\[
\lambda^2A_x+\lambda a-a
%=\lambda^2\left(\frac{a+2}{1-\lambda}\right)+a(\lambda-1)
=\frac{-a+2\lambda^2+2a\lambda}{1-\lambda}.
\]
The inequality
\[
\frac{-a+2\lambda^2+2a\lambda}{1-\lambda}>\frac{a+2}{1-\lambda},
\]
which is equivalent to
$(\lambda-1)(\beta+1)>0,$
ensures that $[f^{2}(A)]_x\geq A_x$.
%We note that $\lambda+a+1<0$ in the case we are considering.

To show that $[f^{2}(B)]_x\leq B_x$, it is sufficient to check the following
inequality:
\[
\lambda^2\left(\frac{-2a-2+a\lambda}{\lambda(1-\lambda)}\right)+
a(\lambda-1)<\frac{-2a-2+a\lambda}{\lambda(1-\lambda)}.
\]
After a simple manipulation, this inequality follows from
$\beta+1<0$. Since $f^{2}$ maps $A$ and $B$ into $AB$ and
$[f^{2}(x,1)]_x$ is increasing on $AB$ with respect to $x$, we
conclude that
%$AB$ is invariant under $f^{2}$.
$f^{2}(AB)\subseteq AB$. Since $\lambda^2<1$, we see that
$f^{2}$ is a contraction on $AB$, hence the trajectories starting in $AB$
converge to the fixed point $Q=(\frac{-a}{1+\lambda},1)$ of $f^2$, which belongs to
the segment $AB$.
\end{proof}
\begin{figure}[h]
\begin{center}
\includegraphics[width=0.5\textwidth]{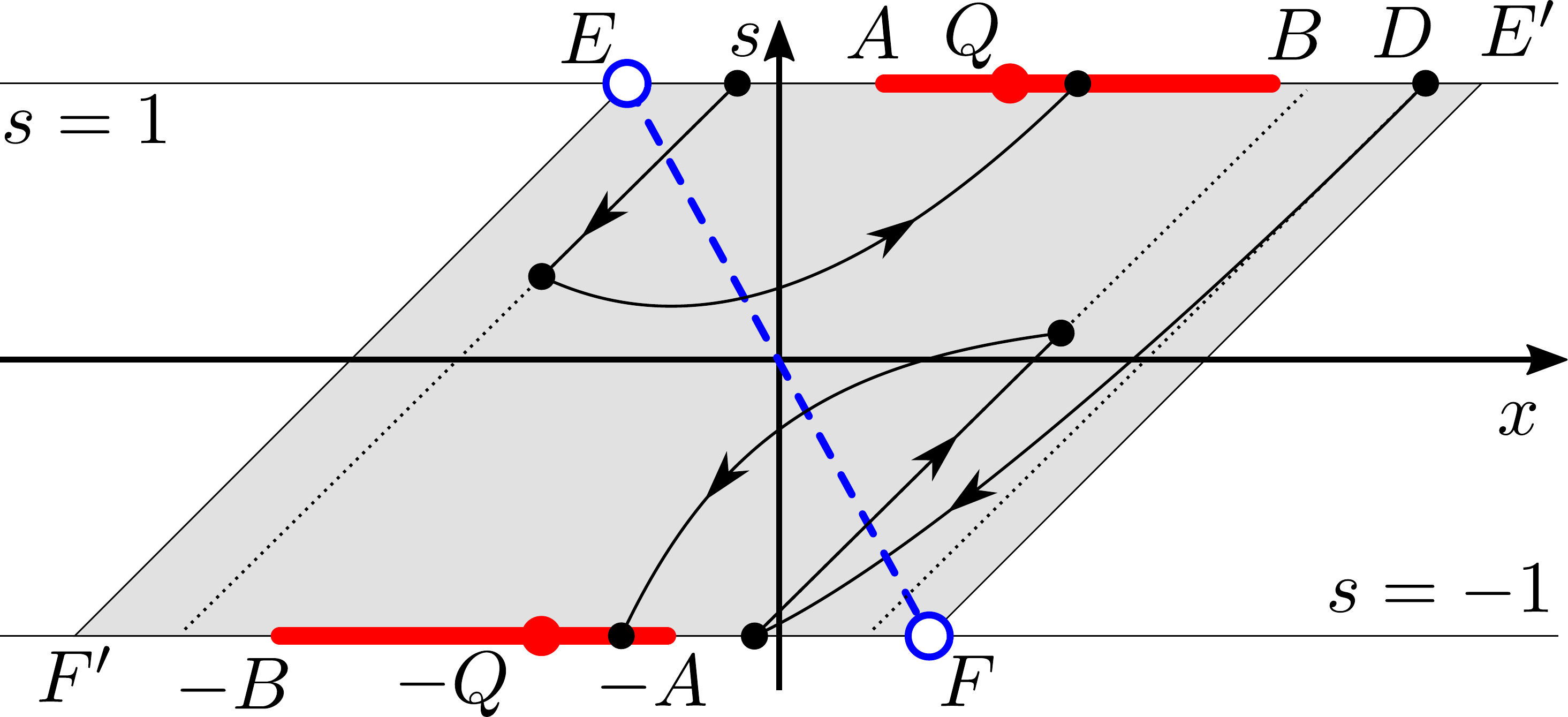}
\caption{Case $\lambda\geq 0$, $\beta<-1$. Each of the red segments
$AB$ and $-AB$ is mapped into
itself by $f^2$.
}\label{fig:2g}
\end{center}
\end{figure}

\paragraph{2}\label{par:4.3.2} Next, we consider the situations
where
$B$
lies to the right
of $E'$ and
where $B$ lies between $A$ and $E'$, respectively. In the former case, any
trajectory
starting between $A$ and $E'$ converges to the $2$-periodic orbit due to
the
above argument. Consider the latter case. Let a trajectory start on the upper
side of the parallelogram $\Pi$ to the right of the point $B$ at a point
$D=(x_n,1)$, see Fig.~\ref{fig:2g}. The image $f(D)=(x_{n+1},-1)$ of this
point lie on the line $s=-1$ to the right of the point $-A$. Therefore,
$f^{2}(D)$ belong to the interior of the strip $L$. Since $\beta<-1$,
further iterations $f^{n+k}(D)$ belong to the line $s=-1$ for odd $k$ and
to the interior of $L$ for even $k$, and the $x$-coordinate of the odd iterations monotonically decreases until the trajectory reaches
 the half-line $\{(x,s):\, x\le -A_x,\, s=-1\}$. Without loss of generality, we can assume that $f(D)$ is the last point of the trajectory, which is still to the right of the point $-A$ on the line $s=-1$. Let us show that the point $(x_{n+3},-1)$  lies to the right of the point $(\frac{a}{1+\lambda},-1)$.
To this end, we note that
\[
\begin{array}{c}
(x_{n+2},s_{n+2})=(\lambda x_{n+1}-a,-1+\lambda x_{n+1}-a-x_{n+1}),
\\
x_{n+3}=\lambda^2 x_{n+1}-\lambda a +a(-1+\lambda x_{n+1}-a-x_{n+1}).
\end{array}
\]
Thus, we need to show that
$x_{n+1}>- \frac{a+2}{1-\lambda}$
implies
\begin{equation}\label{eqnCase5555}
	\lambda^2 x_{n+1}-\lambda a+a(-1+\lambda x_{n+1}-a-x_{n+1})>\frac{a}{1+\lambda},
\end{equation}
i.e.,
\[(\lambda^2+a\lambda-a)x_{n+1}-a(\beta+1)>\frac{a}{1+\lambda}.\]
Since $\lambda^2+a\lambda-a>1$, it suffies
to show \eqref{eqnCase5555} for
$x_{n+1}=-\frac{a+2}{1-\lambda},$
i.e.,
\[(\lambda^2+a\lambda-a)\left(-\frac{a+2}{1-\lambda}\right)-a(\beta+1)>\frac{a}{1+\lambda}.\]
But this is equivalent to
\[\lambda^2(\beta+1)<0,
\]
which is true in the case we are considering.
%  The $x$-coordinate of the image of $C$ satisfies
%\[
%\lambda x_C-a >\lambda\left(\frac{-a-2}{1-\lambda}\right)-a=\frac{-2\lambda-a}{1-\lambda}.
%\]
%The inequality,
%\[
%\frac{2a+2-a\lambda}{\lambda(1-\lambda)}<\frac{-2\lambda-a}{1-\lambda},
%\]
%which is equivalent to $a+1+\lambda^2<0$,
%ensures that the point $f(C)$ lies to the right of the point $-B$ on the line $s=-1$.
%We note that $a+1+\lambda^2<0$ follows from $a+1+\lambda<0$.
  We see that the point $(x_{n+3},-1)$ belongs to the segment connecting
  the points $-A$ and $-B$, which is invariant for the map $f^{2}$ thanks
  to Lemma \ref{lm:AB}. Hence
  the trajectory converges to the $2$-periodic orbit.

\paragraph{3}
Next, we consider a trajectory which starts at a point $D'$ on the line $s=1$ to the left of the point $A$ in the parallelogram $\Pi$. For this trajectory, further odd iterations $f^{k}(D')$ lie in the interior of $L$, while the even iterations $f^{k}(D')$ belong to the
 line $s=1$, and the $x$-coordinate of the even iterations monotonically
 increases until the trajectory reaches the segment $AB$. (This behaviour is
 similar to the behaviour that we considered in paragraph $2$).
 Hence,
 such a trajectory also converges to the $2$-periodic orbit.

Any trajectory that starts in the parallelogram $\Pi$, but not on the lines
$s=\pm1$ and not on the segment of equilibrium points, thanks to
Lemma \ref{lm:Pi_par} will stay inside
$\Pi$. It reaches one of the lines $s=\pm1$ in several iterations due to the
condition $\beta<-1$. Thus, we see that all the trajectories that start
in the parallelogram $\Pi$ except for the segment of equilibrium points,
converge to the $2$-periodic orbit.

\paragraph*{4} Finally, let us consider a trajectory that starts to the right
of the
parallelogram $\Pi$. Since $\beta<0$,
this trajectory
 reaches the line $s=-1$ after one iteration. If it reaches this line to the
 right of the
equilibrium point $F$, then it will move to the left along the line $s=-1$
and converge to the equilibrium point $F$ from the right. On the other
hand, if a trajectory reaches the line $s=-1$ at a point, which lies to the
left of the point $F$, then this point belongs to $\Pi$. This can be shown
exactly in the same way as we did in Section~\ref{Case:3}. Therefore, such a
trajectory converges to the $2$-periodic orbit. We conclude that the
$2$-periodic orbit is stable and its basin of attraction contains the
parallelogram $\Pi$ with the exception of equilibrium points. However,
some trajectories from outside the parallelogram $\Pi$ are attracted to the
semi-stable equilibrium points $E$ and $F$.

\subsection{Case \ref{item_d}}\label{Case:7}
In this case, $a<0$ and the segment
$EF$ has a negative slope (see Fig.~\ref{fig:2h}).
%This section is similar to the Section \ref{Case:4}.
 Like in Section \ref{Case:4}, there exists a $2$-periodic orbit $\pm Q$ defined by
 \eqref{per2}. Let $A$ be as in \eqref{eq:AB}.

First, we note that if a point $(x_n,-1)$ satisfies
$x_n\leq-A_x$,
then $x_{n+1}>\frac{a+2}{1-\lambda}$,  $s_{n+1}=1$.
 Hence the half-line $\{(x,s):\, x\le -A_x,\, s=-1\}$ is mapped to itself
 under $f^2$. Since $x_{n+2}=\lambda^2x_n-\lambda a+a$ and
 $\lambda^2<1$, any trajectory starting at this half-line converges to the
 $2$-periodic orbit $Q$.
\begin{figure}[h]
\begin{center}
\includegraphics[width=0.5\textwidth]{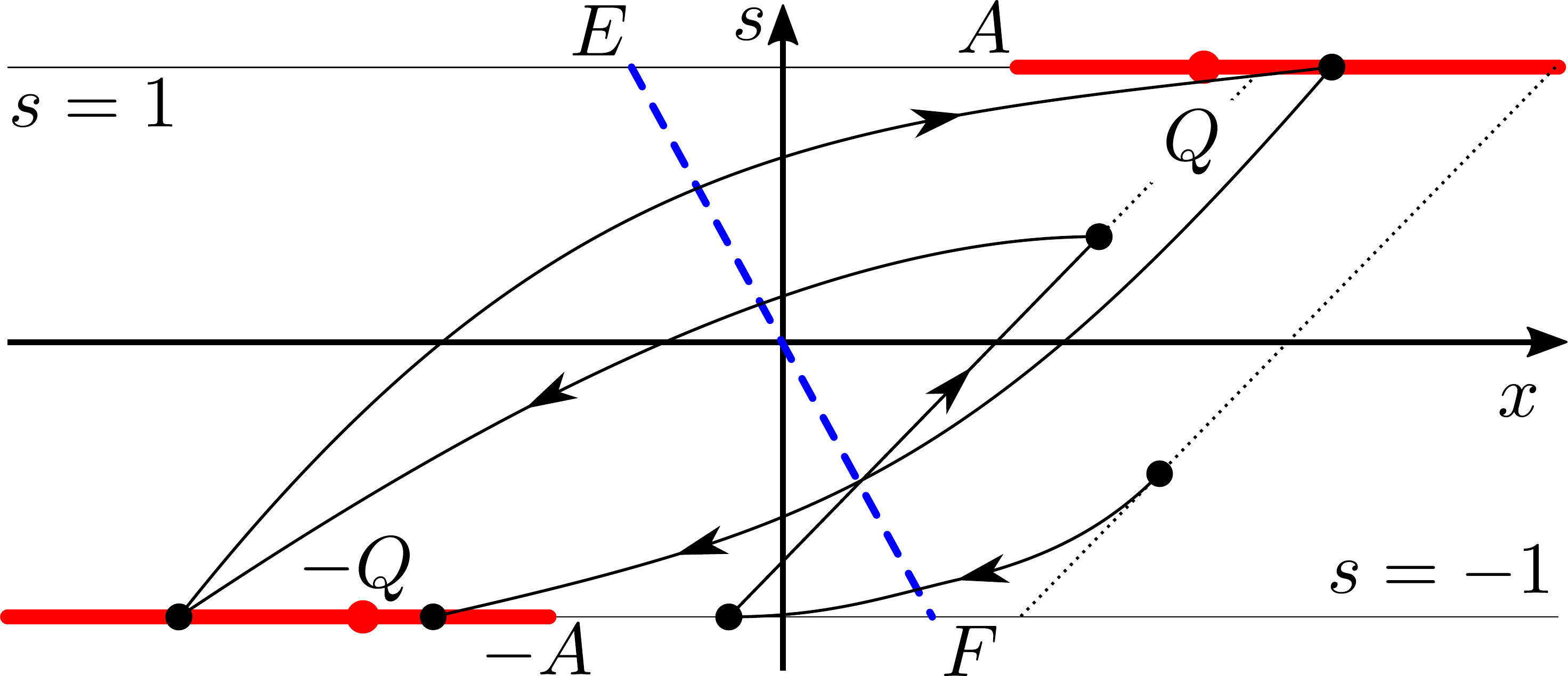}
\caption{Case $\lambda<0$, $\beta<-1$. Red half-lines are mapped to
themselves by $f^2$.}\label{fig:2h}
\end{center}
\end{figure}

If a point belongs to the open segment $\{(x,s):\, -A_x <x<
\frac{-a}{1-\lambda},\, s=-1\}$, then its trajectory enters the half-line
$\{(x,s):\, x\le -A_x,\, s=-1\}$ after finitely many iterations because
$\beta<-1$ (like in Section \ref{Case:4}). Hence, the half-line $\{(x,s):\, x<
\frac{-a}{1-\lambda},\, s=-1\}$ belongs to the basin of attraction of the
$2$-periodic orbit. If a point belongs to the half-line $\{(x,s):\, x>
\frac{-a}{1-\lambda},\, s=-1\}$, then its first iteration is in the half-line
$\{(x,s):\, x< \frac{-a}{1-\lambda},\, s=-1\}$ because $\lambda<0$.
 Hence, we conclude that the lines $s=-1$ and $s=1$ (except for the
 equilibria $F$ and $E$, respectively) belong to the basin of attraction of
 the $2$-periodic orbit.

 Finally, all trajectories that start inside the strip $-1<s<1$, except for the
 equilibrium points, will reach one of the lines $s=\pm1$ after finitely
 many iterations because $\beta<-1$.
Therefore, the $2$-periodic orbit attracts all the trajectories except for the
equilibrium points and their pre-images.

\subsection {Case \ref{item_a}, $\lambda<0$}\label{Case:2b}
\subsubsection{$-1<\beta<0$}\label{Case:6}

For the point $(x_n,s_n)$ to the right of the parallelogram $\Pi$,
one has
%but in this case, it makes only one step to jump over to the left of  the
%segment of equilibrium points $EF$ . Indeed,
%\[
%x_{n+1}=\lambda x_n +as_n=\betax_n+a(s_n-x_n)
%\]
%and because
\[
x_n-s_n>p^*=1-x^*=1-\frac{a}{1-\lambda},
\]
and so
%\[
$x_{n+1}<\lambda p^*-\beta
%=\lambda(1-\frac{a}{1-\lambda})=-\frac{a}{1-\lambda}
=-x^*$,
%\]
hence the point $(x_{n+1},s_{n+1})$ lies to the left of the equilibrium $F$
on the line $s=-1$, see Fig.~\ref{fig:2i}.  Due to \eqref{eq:f_odd},  for
the point $(x_n,s_n)$ to the left of $\Pi$,
its image will lie on the line $s=1$ to the right of the point $E$.
%If the trajectory jumps to the left of $\Pi$, then we can repeat the
%argument used above which shows that
%\[|p_{n+1}-(-x^*+1)|\leq q|p_n-(x^*-1)\]
%(This corresponds to jumping from the left of $\Pi$ to the right of $\Pi$;
%the case of jumping from right to left can be considered in a similar way).

Now, we prove that every trajectory enters $\Pi$. Arguing by contradiction,
let us show that if a trajectory
never entered $\Pi$, then
%Suppose that a trajectory never enters $\Pi$, then it has to jump from a
%point which lies to the right of $\Pi$ on the line $s=1$ to a point which
%lies
%to the left of $\Pi$ on the line $s=-1$ and backward. However, this is
%impossible because
the distance from the trajectory to $\Pi$ would exponentially decrease.
This would imply that such a trajectory converges to a $2$-periodic orbit,
because, as we have seen,
its points belong to the union of the lines $s=\pm1$ and the sign of $s_n$
alternates at every iteration.
However, this is impossible as a $2$-periodic orbit does not exist in the
case
we are considering.

% and the trajectory will need to converge to a period two trajectory which
%does not exist in this case.
In order to see that the distance from a trajectory to $\Pi$ exponentially
decreases, it is sufficient to establish the inequality
\[
q\left(\frac{a}{1-\lambda}-2-x_n\right)> \lambda x_n -a
+\frac{a}{1-\lambda}-2
\]
for $x_n<\frac{a}{1-\lambda}-2$ and some $q\in (-\lambda,1)$
independent of $n$.
This inequality can be written as
\[x_n<\frac{a-(\frac{a}{1-\lambda}-2)(1-q)}{q+\lambda}.
\]
Thus we need to show that
\[
\frac{a}{1-\lambda}-2<\frac{a-(\frac{a}{1-\lambda}-2)(1-q)}{q+\lambda},
\]
which is equivalent to
$
(1+\lambda)\left(\frac{a}{1-\lambda}-2\right) < a
$
and, further, to $\beta\lambda<1.$
Since the last inequality is true in the case being considered, we can use any
$q\in(-\lambda,1)$.
The above argument shows that every trajectory enters the parallelogram
$\Pi$.

Since $\beta\in (-1,0)$ and $\lambda\in (-1,0)$, it follows from Lemma
\ref{lm:Pi_par} that $\Pi$
is invariant for the map $f$. Further, we note that if
some iteration of a point from $\Pi$ is mapped in the interior of $L$, then
the
trajectory converges to an equilibrium due to
$|\beta|<1$, see Fig.~\ref{fig:2i}.

%Next, let us show that %from this moment, the trajectory will stay in
%the parallelogram $\Pi$
%%all the time.
%is invariant for the map $f$.
%If all iterations belong to the interior of $L$, then the trajectory converges
%to an equilibrium due to
%$|\lambda+a|<1$. If $(x_n,s_n)\in\Pi$ belongs to the boundary of $L$,
%then let us assume that $s_n=1$ to be definite.
%There are two possibilities.
%
%First, the point $(x_n,s_n)=(x_n,1)$ is mapped into the interior of $L$
%along the line $p=const$. In this case, the trajectory converges to an
%equilibrium point along the same line $p=const$ because
%$\lambda+a\in(-1,0)$. In particular, if $x_n\leq1$, that is $(x_n,1)$ is to
%the left of the middle of the upper side of $\Pi$, then the next iteration is
%in
%the interior of $L$.
%%, and the same is true if the pair $(x,-1)$ is to the right of the point
%%$(-1,-1)$.
%
%Second, the point $(x_n,1)$ is mapped to a point
%$(x_{n+1},s_{n+1})=(\lambda x_n+a,-1)$. In this case, from the inequality
%$
%x_n\leq-\frac{a}{1-\lambda} +2,
%$
%which is true because $x_n\in\Pi$, it follows that
%\[
%x_{n+1}=\lambda x_n+a\geq\lambda\left(-\frac{
%a}{1-\lambda}+2\right)+a
%%\]
%%But,
%%\[
%%\lambda\left(-\frac{ a}{1-\lambda}+2\right)+a
%\geq\frac{a }{1-\lambda}-2,
%\]
%%because this inequality is equivalent to
%where the last inequality follows from the relation
%$
%%\[
%\lambda\beta\leq1,
%%\]
%$
%which is true in the case we are considering. Hence,
% $(x_{n+1},s_{n+1})\in\Pi$.

\begin{figure}[h]
	\centering
	\begin{subfigure}[h!]{0.48\textwidth}
		\includegraphics[width=\textwidth]{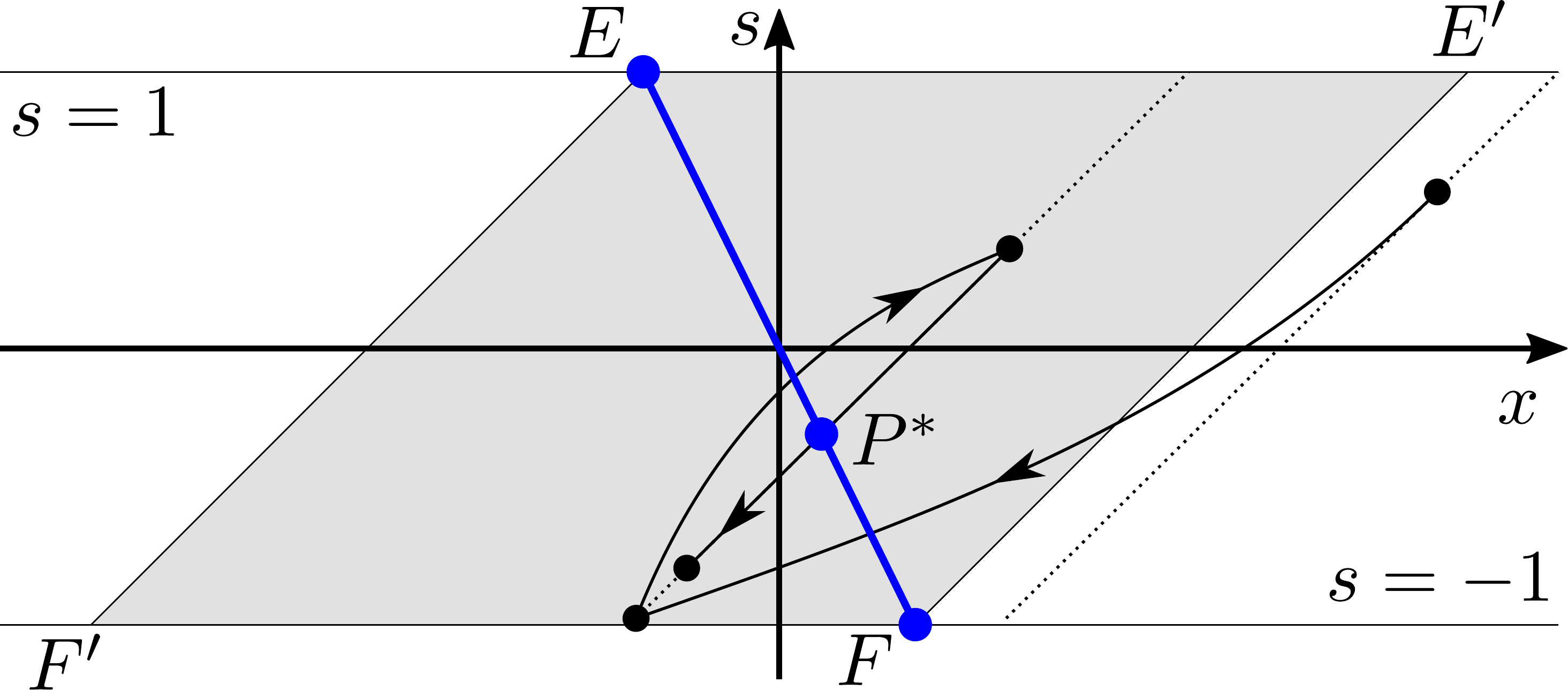}
		\caption{$-1<\beta<0$.}\label{fig:2i}
	\end{subfigure}
	\begin{subfigure}[h!]{0.48\textwidth}
		\includegraphics[width=\textwidth]{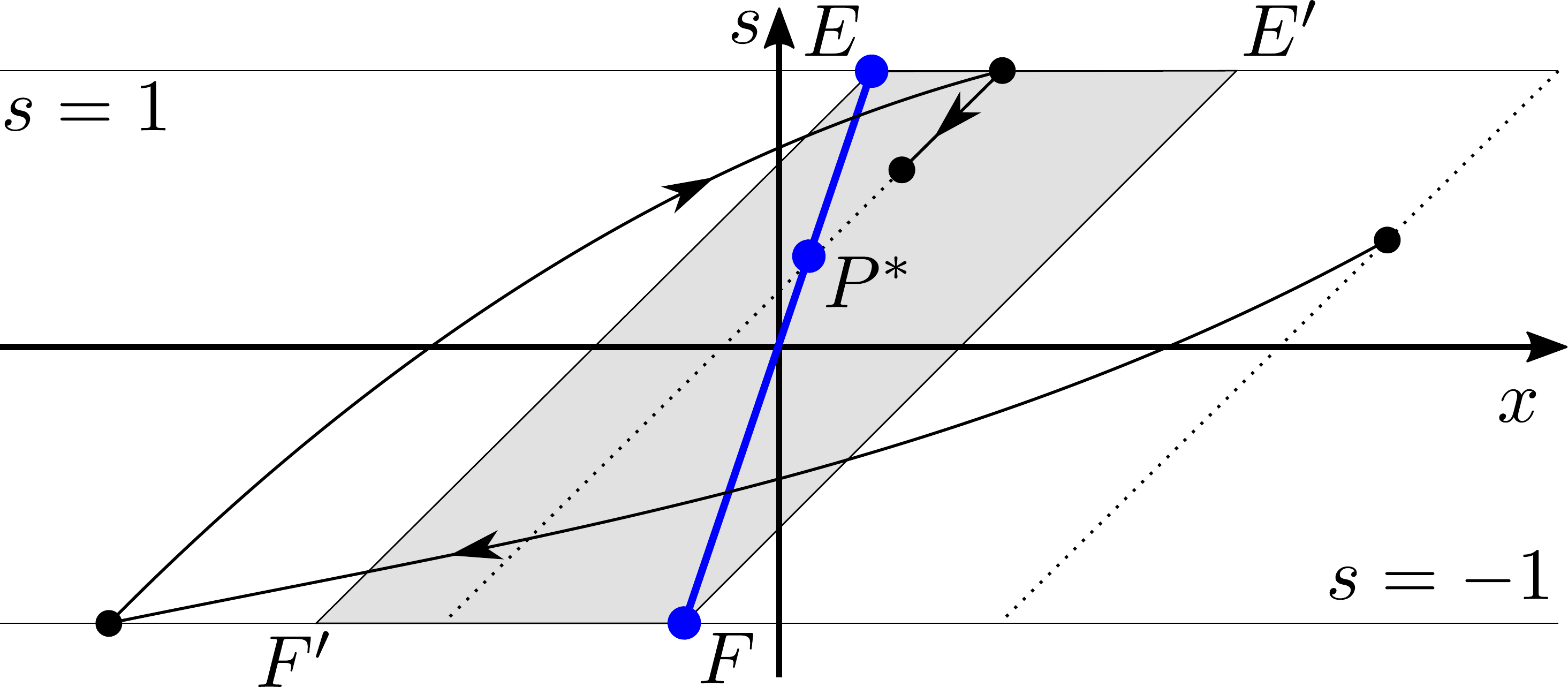}
		\caption{$0\leq\beta<1$.}\label{fig:2j}
	\end{subfigure}
	\caption{Theorem \ref{Theorem:1}\ref{item_a}. Case $\lambda<0$.}
\end{figure}

Finally, let us show that a trajectory cannot jump from the line $s=1$ to the
line $s=-1$ and back all the time. Indeed, if this was the case, then a point
$(x_n,1)$ from this trajectory would satisfy $x_n>1$ and the point
$(x_{n+1},-1)$ would satisfy $x_{n+1}=\lambda x_n+a<-1$. But
inequalities $-1<\beta$ and $x_n>1$ imply that
$
-1-(\lambda x_n+a)<x_n-1.
$
In other words, $0<-1-x_{n+1}<x_n-1$ and, similarly,
$
0< x_{n+2}-1< -1-x_{n+1}.
$
Therefore, this trajectory would converge to the $2$-periodic orbit, which
does
not exist in this case. This contradiction implies that every trajectory
converges to an equilibrium point.

\subsubsection{$0\le \beta<1$}\label{Case:5}

In this case,
$a>0$ and so
the slope of the segment $EF$ is greater than 1 (see Fig.~\ref{fig:2j}). If a
trajectory starts in $\Pi$, then it converges to an
equilibrium point  $P^*\in EF$ along the line $p=const$.
% and the iterations are defined by
%$x_{n+1}-x^*_p=\beta(x_n-x^*_p)$.

A trajectory starting to the right of the parallelogram $\Pi$ moves along the
line $p=const$ down and left until it reaches the line $s=-1$. At this point,
or at the next iteration step, the trajectory reaches a point $(x_n,-1)$
that lies to the left of the equilibrium point $F$ because $\lambda<0$. If
$(x_n,-1)\in \Pi$, then the trajectory converges to an equilibrium as we
have seen above. If the point $(x_n,-1)$ lies to the left of the parallelogram $\Pi$,
then let us show that
%the distance $K_2P_2$ from the line $x-s=x_n-s_n$ to the segment $P_1P_2$, where $P_1=(1,1)$, $P_2=(-1,-1)$, is smaller
%than the distance $K_1P_2$ from the line $x-s=x_{n-1}-s_{n-1}$ to the segment $P_1P_2$.
the absolute value $|x_n-s_n|=-1-x_n$ of the $p$-coordinate of this point is less than the absolute value $|x_{n-1}-s_{n-1}|=x_{n-1}-s_{n-1}$ of the
$p$-coordinate of its preimage $(x_{n-1},s_{n-1})$.
Since $x_{n}=\lambda x_{n-1} +as_{n-1}$ and
$s_{n}=-1$, we want to show that
\begin{equation}\label{nbnbnb}
-(\lambda x_{n-1}+as_{n-1}+1)<(x_{n-1}-s_{n-1})q
\end{equation}
with some $q\in(-\lambda,1)$ independent of the point
$(x_{n-1},s_{n-1})$. Equivalently,
\[
x_{n-1}-s_{n-1}>\frac{-\beta s_{n-1}-1}{q+\lambda}.
\]

Indeed, since the point $(x_{n-1},s_{n-1})$ lies to the right of $\Pi$, we
have $x_{n-1}-s_{n-1}>\frac{-a}{1-\lambda}+1$ and it remains to show
that
\[
\frac{-a}{1-\lambda}+1>\frac{-\beta s_{n-1}-1}{q+\lambda}.
\]
This inequality is equivalent to
\begin{equation}\label{Eqn:1}
1+q>-\beta s_{n-1}(1-\lambda)+\beta (q+\lambda).
\end{equation}
If we set $q=-\lambda+\varepsilon$ with a sufficiently small
$\varepsilon>0$, then $q\in(0,1)$ and the inequalities $|s_{n-1}|\leq1$ and
$0\leq\beta<1$ imply that
\[
-\beta s_{n-1}(1-\lambda)< 1-\lambda,\qquad
\beta (q+\lambda)=\varepsilon( \lambda+a)<\varepsilon,
\]
hence the relation \eqref{Eqn:1} holds.

Since $q$ in \eqref{nbnbnb} does not depend on $(x_{n-1},s_{n-1})$ and
the segment connecting %$Q$ and $-Q$
the points $P_1=(1,1)$ and $P_2=(-1,-1)$ belongs to the interior of $\Pi$, all
trajectories that start outside
the
parallelogram $\Pi$ will eventually enter $\Pi$ and converge to one of the
equilibrium points.

%Statement
%\ref{item_a} of  Theorem \ref{Theorem:1} is completely proved.

\subsection{Case \ref{item_e}}\label{Case:8}
 In this case, $a>0$ and the slope of the segment
 $EF$ is positive and less than $1$ (see Fig. \ref{fig:2k}).
\paragraph*{1}
Denote by $l_1$ and $l_2$ the open half-lines starting at the point $E$ on
the upper boundary of the strip $L$:

%Introduce the following notation:
\[
l_1=\left\{(x,s): x> \frac{a}{1-\lambda}, \ s=1\right\},
\qquad
l_2=\left\{(x,s): x< \frac{a}{1-\lambda}, \ s=1\right\}
\]
and by $l_3$ and $l_4$ the half-lines starting from the point $F$ on the lower boundary of
the strip $L$:
\[
l_3=\left\{(x,s): x< -\frac{a}{1-\lambda}, \ s=-1\right\},
\qquad
l_4=\left\{(x,s): x> -\frac{a}{1-\lambda}, \ s=-1\right\}.
\]

From the condition $\lambda<0$ it follows that $f(l_2) \subseteq l_1$ and
$f(l_4)\subseteq l_3$.  Also from Lemma \ref{lm:mon_EF} it follows that for
any point $(x_n,s_n)$ such
that $x_n>\frac{as_n}{1-\lambda}$ one has $x_{n+1}<x_n$.
Thus, starting from $l_1$, any trajectory  arrives after  finitely many
iterations to the
closed half-line $\overline{l_3}$.
%\[
%V=\left\{(x,s): x>\frac{as}{1-\lambda}, \ -1 \leq s\leq 1\right\}.
%\]
%If $(x_n,1)\in l_2$, then
%\[
%x_{n+1}=\lambda x_n+a>\frac{\lambda a}{1-\lambda}+a=\frac{a}{1-\lambda},
%\]
%Here we used that $x_n<\frac{a}{1-\lambda}$.
%if $(x_n,s_n)\in R$, then
%\[
%x_{n}-x_{n+1}=
%(1-\lambda)x_n - a s_n =
%(1-\lambda)(x_n-s_n) + (1-\lambda-a) s_n
%\]
%\[ > a - 1 + \lambda + s_n (1 - a - \lambda)=
%\]
%\[
%=(\lambda+a-1)(1-s_n)\geq0.
%\]
%
% Here we used that if $(x_n,s_n)\in R$ then $x_n-s_n> \frac{a}{1-\lambda}-1$ (it's obvious).
%So
%the $l_3$ or to the $l_4$ (in the last case next iteration will shift us to $l_3$). In conclusion, starting from $l_1$ after several iterations one way or another we get to the $l_3$.
Hence, we can define the first-hitting map ${ \mathcal P}: l_1\to
\overline{l_3}$
as ${\mathcal P}(A)=f^k(A)$ where $f^{k}(A)\in \overline{l_3}$ and
$f^{i}(A)\not
\in \overline{l_3}$ for $i=0,\ldots,k-1$.
This map can be represented by the scalar function
 $T: (\frac {a}{1-\lambda},\infty)\to [\frac {a}{1-\lambda},\infty)$ defined by the
 formula
\begin{equation}
(-T(x),-1)={\mathcal P}(x,1),\qquad x\in\left(\frac
{a}{1-\lambda},\infty\right).
\end{equation}
%Using the fact that the map $f$ is odd and the rays $l_1\cup E$ and $l_3\cup F$ are centrally symmetric, we identify these rays by the negative identity map. With this notation, and setting by definition ${T}(E)=E$, ${T }$ will be considered as the map of $l_1 \cup E$ into itself.
It is convenient to set $T(\frac {a}{1-\lambda})=\frac{a}{1-\lambda}$ and consider $T$ as a map of the half-line
$[\frac{a}{1-\lambda},\infty)$ into itself.

\paragraph*{2} In this part, we describe the structure of the function
$T(x)$.
%Let us show that for any $k\in \mathbb{N}$ there exists a preimage $(q_k,1)\in l_1$ of the point $F$ under the map ${\mathcal P}$
%such that $f^{k}(q_k,1)=F$ and $f^{i}(q_k,1)\in R$ for $i=0,\ldots,k-1$.
%
% Our next goal is to express $q_k$ in terms of the parameters $a$ and $\lambda$.
We begin with the following observation.

\begin{lemma}
	\label{lm:x_m}
	 Let  $(x_0,1)\in l_1$ be a point such that the first $n-1$ iterations of it
	 under the map $f$ belong to the line $p=const$. Then for any $m\leq
	 n$ we have
\begin{equation}\label{n1}
x_m=\beta^mx_0-a(x_0-1)\sum_{i=0}^{m-1}\beta^i.
\end{equation}
\end{lemma}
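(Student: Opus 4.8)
The plan is to exploit the defining feature of the play variable: as long as the stop operator remains in its linear (unsaturated) regime, $p=x-s$ does not change, and the two-dimensional map \eqref{eq2} collapses to a scalar affine recursion in $x$, which can be solved in closed form.

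First I would record the initial data. Since $(x_0,1)\in l_1$, the initial state is $s_0=1$, so that $p_0=x_0-s_0=x_0-1$; this is the source of the factor $(x_0-1)$ in \eqref{n1}. The hypothesis that the iterates $f(x_0,1),\dots,f^{\,n-1}(x_0,1)$ all lie on the line $p=\mathrm{const}$ means exactly that each of these transitions stays in the unsaturated branch of $\Phi$, i.e. $s_{m+1}=s_m+x_{m+1}-x_m$ and hence $p_{m+1}=x_{m+1}-s_{m+1}=x_m-s_m=p_m$ for $0\le m\le n-1$. Consequently $p_m=x_0-1$, equivalently $s_m=x_m-(x_0-1)$, for every $m=0,1,\dots,n-1$.

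Next I would substitute this expression for $s_m$ into the first equation of \eqref{eq2}. For $0\le m\le n-1$ this yields $x_{m+1}=\lambda x_m+a s_m=\lambda x_m+a\bigl(x_m-(x_0-1)\bigr)=\beta x_m-a(x_0-1)$, using $\beta=\lambda+a$. Thus the $x$-coordinates obey the single affine recurrence $x_{m+1}=\beta x_m-a(x_0-1)$, and since the last admissible step uses $s_{n-1}=x_{n-1}-(x_0-1)$ (which holds because the $(n-1)$-st iterate lies on the line), this recurrence determines $x_m$ for all $m=0,\dots,n$.

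Finally I would solve the recurrence by induction on $m$. The base case $m=0$ is immediate: the right-hand side of \eqref{n1} reduces to $x_0$ because the empty sum vanishes. For the inductive step, assuming the formula for some $m<n$, a one-line computation gives $x_{m+1}=\beta x_m-a(x_0-1)=\beta^{m+1}x_0-a(x_0-1)\bigl(\beta\sum_{i=0}^{m-1}\beta^i+1\bigr)=\beta^{m+1}x_0-a(x_0-1)\sum_{i=0}^{m}\beta^i$, which is \eqref{n1} with $m$ replaced by $m+1$. I do not expect any genuine obstacle: the entire content of the lemma is the observation that the dynamics is affine while the play variable is frozen, and the only point deserving care is the index bookkeeping, namely verifying that the formula is legitimately asserted all the way up to $m=n$ rather than only $m=n-1$.
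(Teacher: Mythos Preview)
Your proposal is correct and follows essentially the same approach as the paper: both argue that the constancy of $p$ forces $s_m=1+x_m-x_0$, reduce the dynamics to the affine recurrence $x_{m+1}=\beta x_m-a(x_0-1)$, and verify \eqref{n1} by induction. Your version is simply more explicit about why the formula is valid all the way up to $m=n$ (since only $s_{n-1}$ is needed for the last step), whereas the paper compresses the whole argument into a one-line inductive computation.
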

\begin{proof}
For $m=1$ equation \eqref{n1} is obvious. Suppose that \eqref{n1} holds
for $m<n$.
Then
\[
x_{m+1}=\lambda x_m+a(1+x_m-x_0)
%=\lambda x_m+a(1-x_0)+ax_m=
%\]
%\[=\betax_m-a(x_0-1)
=\beta^{m+1}x_0-a(x_0-1)\sum_{i=0}^{m}\beta^i.
\]\end{proof}

 Let us show that for any $k\in \mathbb{N}$ there exists a unique point
 $(r_k,1)\in l_1$ such that its first $k-1$ iterations under the map $f$
 belong to the line $p=const$ and its $k$-th iteration is $(r_k-2,-1)$.
 Setting $m=k$, $x_0=r_k$, and $x_{m}=r_k-2$ in \eqref{n1}, we obtain
\begin{equation}\label{n5}
r_k=\frac{2+a\frac{1-\beta^k}{1-\beta}}{1-\beta^k+a\frac{1-\beta^k}{1-\beta}}
\end{equation}
(it is easy to see that the denominator does not vanish as long as $f^{i}(r_k,1)$ belongs to the interior of $L$ for $i=1,...,k-1$).
\begin{figure}[h]
\begin{center}
\includegraphics[width=0.5\textwidth]{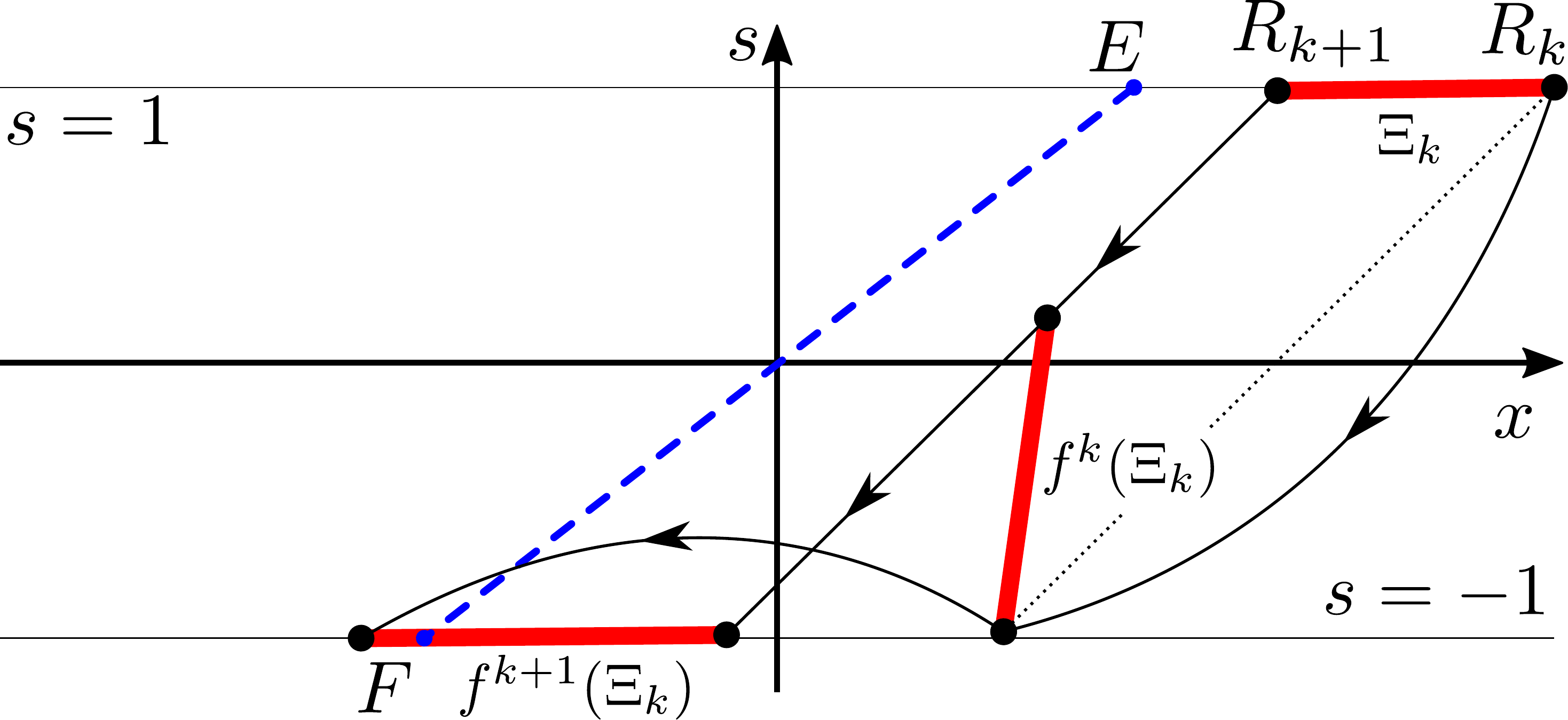}
\caption{ The segment $\Xi_k$ and its images $f^k(\Xi_k)$ and $f^{k+1}(\Xi_k)$.} \label{fig:2k}
\end{center}
\end{figure}

  Next we show that for any $k\in \mathbb{N}$ there exists a unique point
  $(q_k,1)\in l_1$ such that its first $k-1$ iterations under the map $f$
  belong to the line $p=const$ and its $k$-th iteration is $F$.

  Obviously, $q_{1}=\frac{1}{\lambda}(-a-\frac{a}{1-\lambda})$. Set
  $R_i=(r_i,1)$ and consider the $k$-th iteration of the segment
  $\Xi_k=R_{k+1}R_k$. The point $R_k$ is mapped  to the point
  $(r_k-2,-1)$,
  and the image of the point $R_{k+1}$ belongs to the interior of $L$.
  Hence, $f^k(\Xi_k)$ is a segment which lies entirely to the right of the
  segment $EF$ and all its points
  except $f^k(R_k)$ belong to the interior of $L$. Consider the $(k+1)$-st
  iteration of $\Xi_k$. The point $f^{k+1}(R_k)$ lies on the line $s=-1$ to
  the left of the point $F$, while $f^{k+1}(R_{k+1})=(r_{k+1}-2,-1)$. Hence,
  $f^{k+1}(\Xi_k)$ is a segment on the line $s=-1$ and $F\in
  f^{k+1}(\Xi_k)$, see Fig.~\ref{fig:2k}. Hence, there exists a point
  $q_{k+1}\in(r_{k+1},r_k)$ for each $k\in \mathbb{N}$. Now $q_k$ can be
  found in a unique way  using Lemma \ref{lm:x_m} by setting
$m=k$, $x_0=q_k$, and $x_{m}=\frac{-a}{1-\lambda}$:
%\[
%-\frac{a}{1-\lambda}=\beta^kq_k-a(q_k-1)\sum_{i=0}^{k-1}\beta^i.
%\]
%
%Hence
%\begin{equation}\label{n2}
%q_k=\frac{-\frac{a}{1-\lambda}-a\sum_{i=0}^{k-1}\beta^i}{\beta^k-a\sum_{i=0}^{k-1}\beta^i}.
%\end{equation}
%
%Setting $\beta=\lambda+a$ and using the formula for the sum of the geometric progression, we obtain
% for \eqref{n2} we get
\begin{equation}\label{n3}
q_k=\frac{-\frac{a}{1-\lambda}-a\frac{1-\beta^k}{1-\beta}}{\beta^k-a\frac{1-\beta^k}{1-\beta}}.
\end{equation}
Since $a>0$ and $\beta>1$, the denominator does not vanish.
%
%{\bf D: I think, you need to check that $x_{k-1}-q_k>-2$.}
%
%{\bf D: Change the notation $R_k$, $r_k$ below to something else: $P$ and $p$ are used for something else.}
%
% Next, we consider the points $(r_1,1), (r_2,1),...,(r_k,1),...\in l_1$ such that the first $k-1$ iterations of the point $(r_k,1)$ under the map $f$ belong to the line $p=const$ and the $k$th iteration is equal to $(r_k-2,-1)$.
%Setting $m=k$, $x_0=r_k$, and $x_{m}=r_k-2$ in \eqref{n1},
 %Using \eqref{n1}, one can express $r_k$ in terms of $a$ and $\lambda$:
%\[
%2=r_k-\beta^kp_k+a(r_k-1)\sum_{i=0}^{k-1}\beta^i.
%\]
%Hence
%\begin{equation}\label{n4}
%r_k=\frac{2+a\sum_{i=0}^{k-1}\beta^i}{1-\beta^k+a\sum_{i=0}^{k-1}\beta^i}.
%\end{equation}
%
%Using the formula for the sum of a geometric progression for \eqref{n4},
%we obtain
%\begin{equation}\label{n5}
%r_k=\frac{2+a\frac{1-\beta^k}{1-\beta}}{1-\beta^k+a\frac{1-\beta^k}{1-\beta}}.
%\end{equation}
Note that
\begin{equation}\label{points}
q_1>r_1>q_2>r_2>\cdots>\frac{a}{1-\lambda},\qquad q_k,r_k\to \frac{a}{1-\lambda} \ \ {\rm as} \ \ k\to\infty.
\end{equation}
It follows from the relation $(r_k-2,-1)=f^{k}(R_k)$ that $f^{k+1}(R_k)=(\lambda(r_k-2)-a,-1)$, i.e.,
%\begin{equation}\label{n7}
\begin{equation}\label{eqTpk}
T(r_k)=a-\lambda(r_k-2).
\end{equation}
Combining~\eqref{points} and~\eqref{eqTpk}, we see that
\begin{equation}\label{eqTpkMonotone}
  T(r_1)>T(r_2)>\dots,\qquad T(r_k)-\dfrac{a}{1-\lambda}\to T_*>0,
\end{equation}
where
\begin{equation}\label{eqT*}
  T_*=\dfrac{2\lambda(1-a-\lambda)}{1-\lambda}.
\end{equation}
Furthermore, the above argument shows that the function $T(x)$ is continuous and piecewise linear for $x
\in(a/(1-\lambda),\infty)$. Using~\eqref{n5}, \eqref{n3}, and~\eqref{eqTpk}, we see that
it increases on the intervals $(q_1,
 \infty)$ and $(q_k,r_{k-1})$, $k=2,3,\dots$ with
\begin{equation}\label{eqT'qkqk-1}
T'(x)=-\lambda,\ x\in (q_1,\infty),\quad T'(x)=-\left(\beta^k-a\frac{1-\beta^k}{1-\beta}\right),\  x\in (q_k,r_{k-1}),
\end{equation}
 respectively, and
  decreases on the intervals $(r_k,q_k)$, $k\in\mathbb N$, with
\begin{equation}\label{eqT'pkqk2}
T'(x)=-\lambda\left(\beta^k-a\frac{1-\beta^k}{1-\beta}\right),\quad x\in (r_k,q_k),
\end{equation}
see Fig.~\ref{fig:2l}. Every point $x\in[q_1,\infty)$ possesses the property
that  $f(x,1)\in l_3$. Every point $x\in[q_{k+1},q_k)$, $k\in\mathbb N$,
possesses the property that the $(k+1)$th iteration of the point $(x,1)\in
l_1$ under the map $f$ reaches the half-line $l_3$ for the first time.

%Next we show that the local maxima of the graph of $T(x)$ are uniformly bounded away from the origin $(\frac{a}{1-\lambda}, \frac{a}{1-\lambda})$, i.e., $T(r_k)-\frac{a}{1-\lambda}\ge const>0$, where $const$ does not depend on $k\in\mathbb N$. To do so, we have to estimate the distance between the points $V=f^{k+1}(R_k)$ and $F$.
%%Denote by $(v_i,s_i)$, $i$th iteration of the point $R_k$. It's obvious that
%%\[
%%v_k>\frac{a}{1-\lambda}-2.
%%\]
%%It follows that
%%\[
%%v_{k+1}<\lambda(\frac{a}{1-\lambda}-2)-a.
%%\]
%%Thus distance from $B$ to $F$ separate from zero by following value
%The point $U=f^{k}(R_k)$ lies to the right of the parallelogram $\Pi$, hence $|UF|>2a/(1-\lambda)-2$. Since $V=f(U)$, we have
%$|VF|>|\lambda||UF|$, i.e.
%$
%|VF|>T_*>0,
%$ where
%\begin{equation}\label{eqT*}
%  T_*=\dfrac{2\lambda(1-a-\lambda)}{1-\lambda}.
%\end{equation}
%Equivalently,
%\begin{equation}\label{T}
%T(r_k) - \frac{a}{1-\lambda}>T_* \quad \text{for all}\  k\in\mathbb N.
%\end{equation}

%Now we can depict graph of Poincare mapping (see fig. *). Lets denote this mapping by $T$. We also determine $T(\frac{a}{1-\lambda})=\frac{a}{1-\lambda}$.

\begin{figure}[h]
\begin{center}
\includegraphics[width=0.65\textwidth]{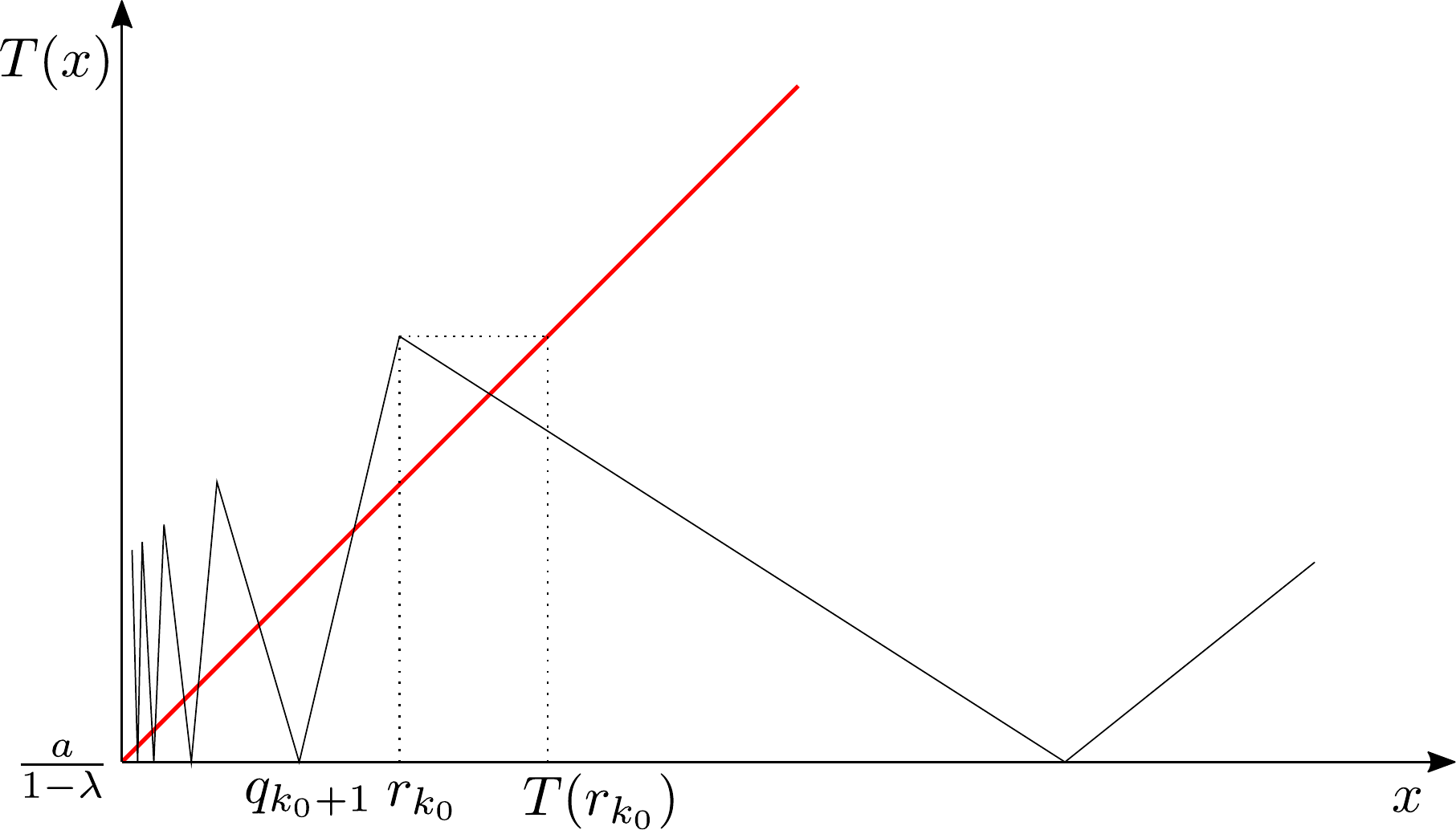}
\caption{Graph of the map $T(x)$ for
$x\in{\left(\frac{a}{1-\lambda},\infty\right)}$.} \label{fig:2l}
\end{center}
\end{figure}

\paragraph*{3} Now we show that system \eqref{eq2} has periodic
orbits
of all sufficiently large periods. Fix $k_1\in \mathbb{N}$ such that
\begin{equation}\label{eqqT*}
  q_{k_1}-\frac{a}{1-\lambda}<T_*,
\end{equation}
where $T_*>0$ is given by~\eqref{eqT*}.
 Note that $T([r_{k_1},q_{k_1}])=[\frac{a}{1-\lambda},T(r_{k_1})]$. Fix $k_2\in\mathbb{N}$ such that $$q_{k_2}<T(r_{k_1}).$$
For any $m\geq k_2$, denote by $\Theta_m$ the subsegment of
$[r_{k_1},q_{k_1}]$ such that $T(\Theta_m)=[r_m,q_m]$. It follows
from~\eqref{eqTpkMonotone} and~\eqref{eqqT*} that
$$
T^{2}(\Theta_m)\supset\left[\dfrac{a}{1-\lambda},\dfrac{a}{1-\lambda}+T_*\right]\supset
 \Theta_m.
$$
 Hence, the map $T^2$ has a fixed point in $\Theta_m$. Due to the
 argument in part 2 of this section, the corresponding periodic solution of
 the system~\eqref{eq2} will be of period $k_1+m+2$. Hence, for any
 $k\geq k_1+k_2+2$, system \eqref{eq2} has $k$-periodic orbit.

\paragraph*{4} To complete the proof of statement~\ref{item_e}, it remains to show that system  \eqref{eq2} has
no more than one stable periodic orbit. %Moreover, if the stable periodic
%orbit exists, it corresponds to a fixed point  of the map $T(x)$.
In this part, we find a necessary and sufficient condition for the map $T(x)$
to have fixed points in the interval $(q_{k+1},q_k)$ (obviously, $T(x)$ has
no fixed points for $x\ge q_1$ because $T'(x)=-\lambda\in(0,1)$ for all
$x>q_1$). Then we show that at most one fixed point of $T(x)$ can be
stable. Finally, in part $5$, we prove that all the
%fixed points of any iteration $T^k(x)$ with $k\ge 2$ are unstable, except for the possibly stable fixed point discussed above.
periodic orbits of $T(x)$ with minimal period greater than $1$ are unstable.

\begin{lemma}
	\label{lm:stab}
	The map $T(x)$ has a fixed point in the interval $(q_{k+1},q_k)$ if
	and only if
\begin{equation}\label{n8}
1+\lambda\beta^{k}\leq 0.
\end{equation}
The period of the corresponding orbit of
system~\eqref{eq2} equals $(2k+2)$.
\end{lemma}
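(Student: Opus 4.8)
The plan is to read off the existence of a fixed point directly from the explicit piecewise-linear ``tent'' shape of $T$ on $[q_{k+1},q_k]$ already assembled in part~2. First I would pin down this shape. Since $q_j$ is by construction the point of $l_1$ whose $j$-th iterate under $f$ is $F=(-\frac{a}{1-\lambda},-1)$, one has $\mathcal P(q_j,1)=F$ and hence $T(q_{k+1})=T(q_k)=\frac{a}{1-\lambda}$, the minimal value of $T$. The only interior break point of $T$ in $(q_{k+1},q_k)$ is $r_k$ (by the ordering in~\eqref{points}, $q_{k+1}<r_k<q_k$), where by~\eqref{eqTpk} the value is $T(r_k)=a-\lambda(r_k-2)$; and by~\eqref{eqT'qkqk-1}, \eqref{eqT'pkqk2} the function $T$ is continuous, strictly increasing on $(q_{k+1},r_k)$ and strictly decreasing on $(r_k,q_k)$. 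Thus on $[q_{k+1},q_k]$ the graph of $T$ is a tent whose apex sits at $x=r_k$.

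Next I would reduce existence of a fixed point to one scalar inequality. Set $g(x)=T(x)-x$; it is continuous and piecewise linear with vertices only at $q_{k+1},r_k,q_k$, so $\max_{[q_{k+1},q_k]}g$ is attained at one of these three points. At the endpoints $g(q_{k+1})=\frac{a}{1-\lambda}-q_{k+1}<0$ and $g(q_k)=\frac{a}{1-\lambda}-q_k<0$ because $q_{k+1},q_k>\frac{a}{1-\lambda}$ by~\eqref{points}. Hence $g$ is nonnegative somewhere on the interval if and only if $g(r_k)\ge 0$, i.e.\ $T(r_k)\ge r_k$; and in that case a zero of $g$ lies in the open interval (if $g(r_k)=0$ it is $r_k$ itself, otherwise the intermediate value theorem applied to the decreasing branch $(r_k,q_k)$ produces one), while the endpoints are never fixed since $T(q_{k+1})=T(q_k)<q_{k+1}$. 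This establishes that a fixed point exists in $(q_{k+1},q_k)$ if and only if $T(r_k)\ge r_k$.

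It remains to identify $T(r_k)\ge r_k$ with the stated condition and to read off the period. Substituting $T(r_k)=a-\lambda(r_k-2)$, the inequality becomes $(1+\lambda)r_k\le a+2\lambda$; inserting the closed form~\eqref{n5} for $r_k$, whose denominator $1-\beta^k+a\frac{\beta^k-1}{\beta-1}$ is positive (equivalently $\beta^k-a\frac{\beta^k-1}{\beta-1}<0$, which is just positivity of the slopes in~\eqref{eqT'qkqk-1}), and clearing this positive denominator reduces the inequality, after using $a+\lambda-1=\beta-1$, to $-2(1+\lambda\beta^{k})\ge 0$, i.e.\ $1+\lambda\beta^{k}\le 0$. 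I expect this algebraic collapse to be the only delicate point: the cancellation of all $\beta^k$-dependence down to the single term $\lambda\beta^k$ is not visible until one substitutes $a\,\frac{\beta^k-1}{\beta-1}(a+\lambda-1)=a(\beta^k-1)$. For the period, part~2 gives that every $x\in(q_{k+1},q_k)$ needs exactly $k+1$ iterations of $f$ to first reach $l_3$; so a fixed point $x^\ast$ of $T$ produces an $f$-trajectory $(x^\ast,1)\to\cdots\to(-x^\ast,-1)$ of length $k+1$, which by the oddness~\eqref{eq:f_odd} closes after a further $k+1$ steps, giving minimal period $2k+2$.
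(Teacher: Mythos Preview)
Your proof is correct and follows essentially the same route as the paper: reduce existence of a fixed point in $(q_{k+1},q_k)$ to the single inequality $T(r_k)\ge r_k$ via the tent shape of $T$ (which the paper leaves to Fig.~\ref{fig:2l}), then substitute~\eqref{n5} and~\eqref{eqTpk} and simplify to $1+\lambda\beta^k\le 0$. One tiny quibble: your parenthetical ``equivalently $\beta^k-a\frac{\beta^k-1}{\beta-1}<0$'' is strictly an implication (the denominator equals $1$ minus that quantity, so its positivity only needs the quantity to be $<1$), but since the quantity is indeed negative by the positivity of the slopes in~\eqref{eqT'qkqk-1}, the conclusion stands.
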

\begin{proof}
The interval $(q_{k+1},q_k)$ contains a fixed point if and only if
\begin{equation}\label{n6}
T(r_k)\geq r_k,
\end{equation}
see Fig.~\ref{fig:2l}.
Using~formulas~\eqref{n5} and~\eqref{eqTpk}, we see that \eqref{n6} is equivalent to
\[
\frac{a+2\lambda}{1+\lambda}\geq\frac{2+a\frac{1-\beta^k}{1-\beta}}{1-\beta^k+a\frac{1-\beta^k}{1-\beta}},
\]
which can be rewritten as~\eqref{n8}.
\end{proof}

Note that, given $a$ and $\lambda$, inequality~\eqref{n8} holds for all
sufficiently large $k$. We denote by $k_0=k_0(\lambda,a)$ the smallest
$k$ with this property.
%
%{\bf D: Should we use strict inequality in \eqref{n8}?}
%
%{\bf D: We need to say somewhere that for $q>q-1$ the function $T$ is linear with the slope $\lambda$.}

Now we fix $a$ and $\lambda$ and an arbitrary $k\ge k_0(\lambda,a)$ and
study  the stability of the fixed points of $T(x)$ in the interval
$(q_{k+1},q_k)$. First, note that if~\eqref{n8} holds as an equality, then the
interval $(q_{k+1},q_k)$ contains a unique fixed point $r_k$. It is unstable
because the slope of the graph of $T(x)$, $x\in(q_{k+1},r_k)$ is positive
and greater than one. Assume that~\eqref{n8} holds as a strict inequality,
i.e,
\begin{equation}\label{n8Strict}
  1+\lambda\beta^{k}<0.
\end{equation}
 Then there are two fixed points on the interval $(q_{k+1},q_k)$. The left one belongs to the interval $x\in(q_{k+1},r_k)$ and is unstable (as in the previous case). The right one belongs to the interval $(r_k,q_k)$. It is stable if and only if
\begin{equation}\label{eqT'pkqk1}
T'(x)\in [-1,0),\quad x\in (r_k,q_k).
\end{equation}
Combining~\eqref{eqT'pkqk1} and~\eqref{eqT'pkqk2}, we conclude that the fixed point from the interval $(r_k,q_k)$ is stable if and only if
\begin{equation}\label{n11}
  \lambda\beta^k+a-1\ge 0.
\end{equation}
\begin{lemma}
	\label{lm:omega_k}
	 Inequalities \eqref{n8Strict} and \eqref{n11} are either
	 incompatible for all $k\ge k_0$, or they hold for $k=k_0$ only.
\end{lemma}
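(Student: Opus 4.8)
The plan is to rewrite both inequalities in a common, monotone form and then exploit the fact that \eqref{n8Strict} defines an upward-closed set of indices $k$ while \eqref{n11} defines a downward-closed one, so that their overlap inside $\{k\ge k_0\}$ can contain at most the single threshold value $k=k_0$. First I would substitute $a=\beta-\lambda$ and set $c=-1/\lambda>1$ (recall $-1<\lambda<0$ and $\beta>1$ in case \ref{item_e}). Dividing \eqref{n8Strict} by $\lambda<0$ turns it into $\beta^k>c$, and a short manipulation of \eqref{n11}, using $\lambda(\beta^k-1)+(\beta-1)\ge 0$ and $\beta-1>0$, turns it into
\[
\frac{\beta^k-1}{\beta-1}\le c .
\]
Thus \eqref{n8Strict} and \eqref{n11} hold simultaneously exactly when $\frac{\beta^k-1}{\beta-1}\le c<\beta^k$, which is precisely the defining relation of $\Omega_k$ in \eqref{eqn:C66'}. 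Since $\beta^k$ is increasing in $k$, condition \eqref{n8Strict} holds on an upward-closed set of indices, whereas the partial geometric sum $S_k:=\frac{\beta^k-1}{\beta-1}=\sum_{i=0}^{k-1}\beta^i$ is also increasing, so \eqref{n11} holds on a downward-closed set.

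The heart of the argument is to show that \eqref{n11} already fails for every $k\ge k_0+1$. By the definition of $k_0$ as the smallest index satisfying \eqref{n8} we have $\beta^{k_0}\ge c$. For $k\ge k_0+1$ the sum $S_k$ has at least two terms and is bounded below by its last term plus $1$, so that
\[
S_k=\sum_{i=0}^{k-1}\beta^i\ge 1+\beta^{k-1}\ge 1+\beta^{k_0}>c,
\]
where I used $k-1\ge k_0$ together with $\beta>1$. Hence \eqref{n11} is violated for all $k\ge k_0+1$. Consequently \eqref{n8Strict} and \eqref{n11} can be compatible only at $k=k_0$, which is exactly the dichotomy asserted: either $k=k_0$ satisfies both (and then it is the unique such index), or it fails one of them, in which case no $k\ge k_0$ works.

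The computations are routine; the only points needing care are the interplay between the strict inequality \eqref{n8Strict} and the non-strict inequality \eqref{n8} that defines $k_0$, and the degenerate boundary case $\beta^{k_0}=c$. In that case \eqref{n8Strict} fails already at $k=k_0$, while the estimate above still rules out $k\ge k_0+1$, so the pair is incompatible for all $k\ge k_0$, consistent with the statement. The genuinely substantive ingredient is the geometric-sum estimate $S_k>\beta^{k-1}\ge c$, which is what forces the up-set of \eqref{n8Strict} and the down-set of \eqref{n11} to overlap in at most one index.
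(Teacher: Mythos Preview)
Your proof is correct and, in fact, more self-contained than the paper's. The paper's argument runs by contradiction: it observes that \eqref{n8Strict} and \eqref{n11} together mean $(\lambda,\beta)\in\Omega_k$, asserts without proof that the sets $\Omega_k$ are pairwise disjoint, and then notes that if both inequalities held for some $k>k_0$ they would also hold for $k_0$ (by the definition of $k_0$ and the monotonicity of $\lambda\beta^k$), contradicting disjointness. You instead prove directly that \eqref{n11} fails for every $k\ge k_0+1$ via the geometric-sum bound $S_k\ge 1+\beta^{k-1}\ge 1+\beta^{k_0}>c$, using only $\beta^{k_0}\ge c$ from the definition of $k_0$. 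This is essentially a proof of the disjointness that the paper merely asserts, so your route is slightly more elementary and closes a small gap in the paper's presentation; the paper's route, on the other hand, makes the connection to the parameter regions $\Omega_k$ and to Theorem~\ref{Theorem:2} more explicit.
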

\begin{proof}
In the
parameter plane $(\lambda,\beta)$, we introduce the regions $\Omega_k$
(see \eqref{eqn:C66'}).
 Assume
that for given $(
 \lambda,\beta)$, inequalities \eqref{n8Strict} and \eqref{n11} hold for
 $k_1$ and $k_2$. Then,  $(\lambda,\beta)\in
 \Omega_{k_1}\cap\Omega_{k_2}$. But $\Omega_{k_1}$ and
 $\Omega_{k_2}$ do not intersect for $k_1\ne k_2$. Hence, there exists
 no
 more than one $k
 \ge k_0$ for which inequalities \eqref{n8Strict} and \eqref{n11} hold simultaneously. Assume that both inequalities hold for some $k>k_0$. Then inequality \eqref{n8Strict} holds also for $k_0$ by definition of $k_0$, while inequality \eqref{n11} holds for $k_0$ due to the monotonicity of its left-hand side with respect to $k$. However, as we have just seen, both inequalities \eqref{n8Strict} and \eqref{n11} cannot hold for two different values $k$ and $k_0$ simultaneously.
\end{proof}

\paragraph*{5} We denote by $r_*$ the (unique) fixed point of the map
$T(x)$  in the interval $[r_{k_0},q_{k_0})$, where $k_0=k_0(\lambda,a)$ was
introduced in part 4. Recall that the map $T(x)$ has no fixed points for
$x>r_*$. In particular, this implies that
\begin{equation}\label{eqTxx}
  T(x)<x\quad \text{for all } x>r_*.
\end{equation}

It remains to show that all the fixed points of any iteration of $T(x)$, except for the  possibly stable fixed point $r_*$, are unstable.

First, we note that the fixed points of any iteration belong to the segment $[\frac{a}{1-\lambda},T(r_{k_0})]$.  Indeed, if $x>T(r_{k_0})$, then $x> r_*$, and~\eqref{eqTxx} implies that either  $T^j(x)\in [\frac{a}{1-\lambda},T(r_{k_0})]$ for some $j$, or $T^j(x)\to T(r_{k_0})$ as $j\to\infty$.

Second, let us show that the segment $[\frac{a}{1-\lambda},T(r_{k_0})]$ is mapped onto itself under $T(x)$. Indeed, $[q_{k_0+1},r_{k_0}]\subset [\frac{a}{1-\lambda},T(r_{k_0})]$ and $T([q_{k_0+1},r_{k_0}])=[\frac{a}{1-\lambda},T(r_{k_0})]$. Hence, $[\frac{a}{1-\lambda},T(r_{k_0})]\subset T([\frac{a}{1-\lambda},T(r_{k_0})])$. On the other hand, $T(x)\le T(r_{k_0})$ for $x\le r_*$ due to the monotonicity of $T(r_k)$ (see~\eqref{eqTpkMonotone}) and $T(x)<x\le T(r_{k_0})$ for $x\in[r_*, T(r_{k_0})]$ due to~\eqref{eqTxx}. Thus,
$$
T\left(\left[\frac{a}{1-\lambda},T(r_{k_0})\right]\right)=\left[\frac{a}{1-\lambda},T(r_{k_0})\right],
$$
and it suffices to study the iterations of $T(x)$ on the segment
$[\frac{a}{1-\lambda},T(r_{k_0})]$. We consider two cases: fixed point
$r_*$ being stable or unstable.

\subparagraph*{5.1} Assume that the fixed point $r_*$ is stable. Let us
show
that
\begin{equation}\label{eqTpk0pk0qk0}
  T(r_{k_0})\in [r_*,q_{k_0}).
\end{equation}
Obviously $T(r_{k_0})\ge r_*$. On the other hand, since
\begin{equation}\label{eqT'1pk0qk0}
|T'(x)|\le 1\quad \text{for all } x\in (r_{k_0},q_{k_0}),
\end{equation}
 it follows that
$$
q_{k_0}-r_{k_0}\ge T(r_{k_0})-\dfrac{a}{1-\lambda}> T(r_{k_0})- r_{k_0},
$$
i.e., $T(r_{k_0})\le q_{k_0}$.

Next, we show that the segment $[r_{k_0},T(r_{k_0})]$ is invariant under $T$, i.e.,
\begin{equation}\label{eqpk0Tpk0Invariant}
  T([r_{k_0},T(r_{k_0})])\subset [r_{k_0},T(r_{k_0})].
\end{equation}
 Since $T(x)$ is linear on this segment, we need to check the images $T(r_{k_0})$ and $T^2(r_{k_0})$ of the end points only. Obviously, $T(r_{k_0})$ belongs to this segment. Moreover, relations~\eqref{eqTpk0pk0qk0} and~\eqref{eqT'1pk0qk0} show that all the iterations of $r_{k_0}$ under the map $T$ belong to the segment $[r_{k_0},T(r_{k_0})]$ (and converge to $r_*$). In particular, $T^2(r_{k_0})$ belongs to this interval.

Now we are ready to prove that the fixed points of any iteration of $T(x)$, except for $r_*$, are unstable. Assume, to the contrary, that $x_*\in (\frac{a}{1-\lambda},T(r_{k_0})]$ is a stable fixed point of $T^{j_*}(x)$ for some $j_*\ge 2$ and $x_*\ne r_*$. We have seen in part 4 of this section that $|T'(x)|>1$ for all $x\in(r_{k+1},q_{k+1})\cup(q_{k+1},r_k)$, $k\ge k_0$. Therefore, the only possibility for $x_*$ to be stable is that $T^j(x_*)\in [r_{k_0},T(r_{k_0})]$ for some $j\in\mathbb N$. However, all the trajectories entering this segment converge to  $r_*$ due to~\eqref{eqT'1pk0qk0} and~\eqref{eqpk0Tpk0Invariant}.

\subparagraph*{5.2} Assume that the fixed point $r_*$ is unstable. Then
$k_0>1$ (otherwise, $|T'(r_*)|=\lambda^2<1$). It follows from the
monotonicity of $T(r_k)$ (see~\eqref{eqTpkMonotone}) and~\eqref{eqTxx}
that  $T(r_{k_0})< T(r_{k_0-1})<r_{k_0-1}$, i.e.,
\begin{equation}\label{eqTpk0pk0-1}
  \left[\dfrac{a}{1-\lambda},T(r_{k_0})\right]\subset \left[\dfrac{a}{1-\lambda},r_{k_0-1}\right].
\end{equation}
But $|T'(x)|>1$ for all $x\in (q_{k+1},r_k)\cup (r_k,q_k)$, $k\ge k_0$, due
to part 4 of this section, and $|T'(x)|=|\lambda^{-1} T'(r_*)|>1$ for  all
$x\in (q_{k_0},r_{k_0-1})$ due to~\eqref{eqT'qkqk-1}
and~\eqref{eqT'pkqk2}. This and~\eqref{eqTpk0pk0-1} imply that the
absolute values of all the slopes of the graph of $T(x)$ on the interval
$[\frac{a}{1-\lambda},T(r_{k_0})]$ are greater than 1. Hence, the same is
true for any iteration of $T(x)$ on this interval. Therefore, all the fixed
points of any iteration of $T$ are unstable. This completes the proof of
statement~\ref{item_e} of
Theorem \ref{Theorem:1}.

\subsubsection*{Proof of Theorem \ref{Theorem:2}}

If
$(\lambda,\beta)\in\Omega_k$
for some $k\in\mathbb N$, then, according to Lemmas \ref{lm:stab} and
\ref{lm:omega_k} , the
map $T$ has a stable fixed point in the interval $[r_k,q_k)$ and
$k=k_0(\lambda,a)$. It corresponds to the stable $(2k+2)$-periodic orbit
of system~\eqref{eq2}. According to part 5.1 of this section, all the other
periodic orbits are unstable.

If $(\lambda,a)\notin\bigcup_{k\in\mathbb N} \Omega_k$, then, according
to Lemma \ref{lm:omega_k}, the map $T$ has no stable fixed points.
Therefore,
due to part 5.2 of this section, all the periodic orbits are unstable.
\endproof

\subsection{Case \ref{item_f}}\label{Case:9}

In this case, the parallelogram $\Pi$ degenerates into the segment of the
equilibrium points $EF$ with the slope $1$ (see Fig.~\ref{fig:2m}).
\begin{figure}[h!]
	\begin{center}
		\includegraphics[width=0.5\textwidth]{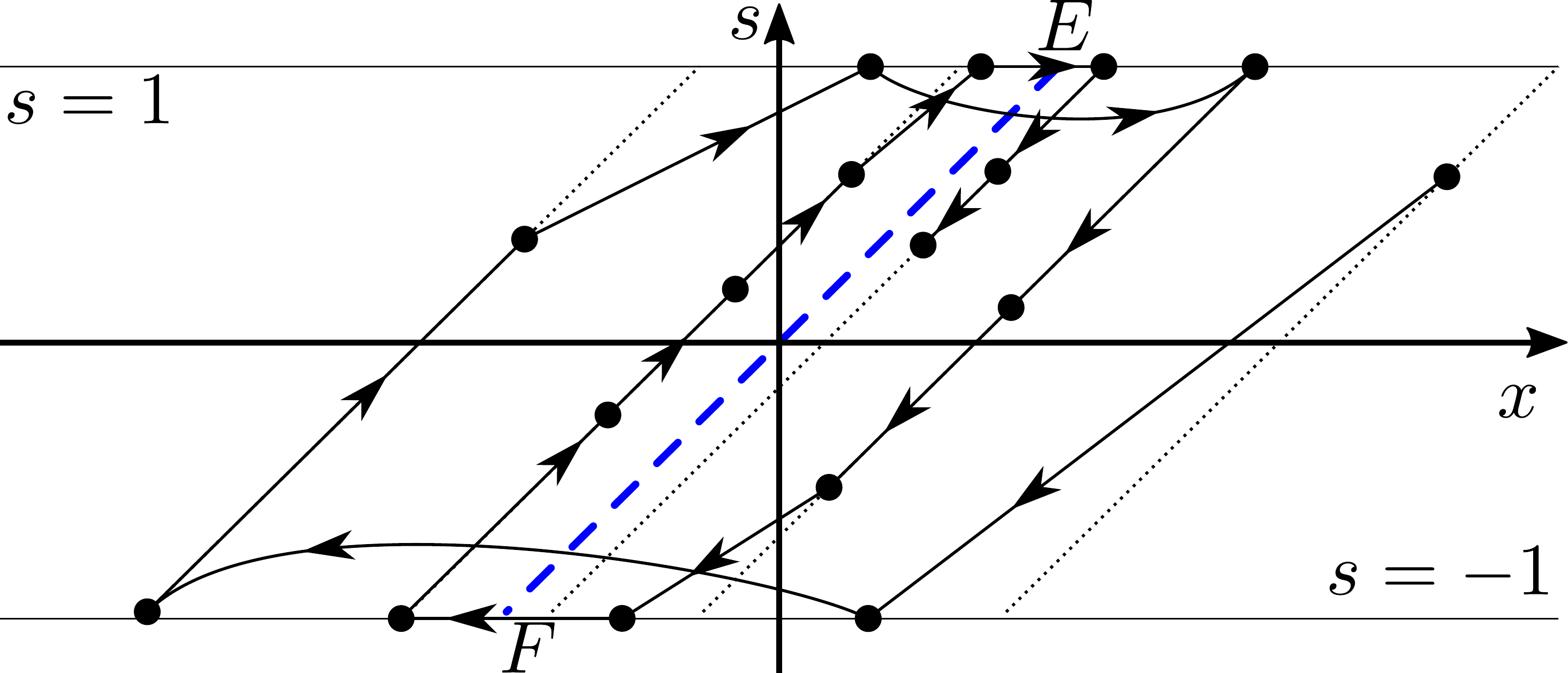}
		\caption{Case $\beta=1$, $\lambda<0$. The parallelogram $\Pi$
		degenerates to the segment $EF$ with slope
		$1$.} \label{fig:2m}
	\end{center}
\end{figure}

Let us consider a point $(x_n,s_n)\not\in EF$. To be definite, assume that
$p_n=x_n-s_n>0$. Denote by $(x_{n+k},s_{n+k})$ the first iteration that reaches the line $s=-1$ after the moment $n$,
i.e., $s_{n+i}>-1$ for $i=0,\ldots,k-1$ and $s_{n+k}=-1$ (if $s_{n}=-1$ we agree that $k=0$).
If $p_{n+k}=0$, then the trajectory ends at the point $F$. If $p_{n+k}>0$, then $0<p_{n+k}\le p_{n+k-1}=\cdots =p_n$ and
\begin{equation}\label{fix}
p_{n+k+1}=\lambda x_{n+k} +a s_{n+k} -s_{n+k+1}=\lambda x_{n+k} -a+1=\lambda x_{n+k}+\lambda=\lambda p_{n+k},
\end{equation}
where we use $a=1-\lambda$. If $p_{n+k}<0$, then $p_{n+k}<0< p_{n+k-1}=\cdots =p_n$ and
$$
p_{n+k}=\lambda x_{n+k-1} +a s_{n+k-1} -s_{n+k}=\lambda p_{n+k-1} +s_{n+k-1} -s_{n+k}\ge \lambda p_{n+k-1},
$$
hence
\begin{equation}\label{fixx}
|p_{n+k}|\le |\lambda| |p_{n+k-1}|.
\end{equation}
Inequalities \eqref{fix} and \eqref{fixx} and similar inequalities that hold for
ascending parts of trajectories, due to \eqref{eq:f_odd}, show that the
trajectory either ends up at $E$ or $F$, or converges to the segment $EF$.

\subsection{Case \ref{item_g}}\label{Case:10}
For $\beta=-1$ it is straightforward to see that the parallelogram
$\Sigma$, which is contained in $\Pi$, consists of $2$-periodic orbits and
the
segment $EF$ of equilibrium points.

If $\lambda\ge 0$ (see Fig.~\ref{fig:2n1}), then from Lemma \ref{lm:Pi_par}
it follows  that the parallelogram $\Pi$ is invariant under the map $f$
and if $(x_n,s_n)\in \Pi\setminus \Sigma$, then either $(x_{n+1},s_{n+1})\in\Sigma$
or $x_{n+1}<-1$, $s_{n+1}=-1$ or $x_{n+1}>1$, $s_{n+1}=1$. But, since $\lambda\ge0$,
relations $x_{n+1}<-1$, $s_{n+1}=-1$ and $f(-1,-1)=(1,1)$ imply $x_{n+2}<1$, $s_{n+2}=1$
and, similarly, relations $x_{n+1}>1$, $s_{n+1}=1$ and $f(1,1)=(-1,-1)$ imply $x_{n+2}>-1$, $s_{n+2}=-1$.
In both cases, $(x_{n+2},s_{n+2})\in \Sigma$. Thus, $f^{2}$ maps $\Pi$ into $\Sigma$.
On the other hand, the argument presented in Section \ref{Case:4} shows that a trajectory starting outside $\Pi$
either converges to the point $F$ along the line $s=-1$ from the right
or to the point $E$ along the line $s=1$ from the left or meets the boundary of the strip $L$ inside $\Pi$.\\

\begin{figure}[h]
\centering
\begin{subfigure}{0.48\textwidth}
  \centering
  \includegraphics[width=\textwidth]{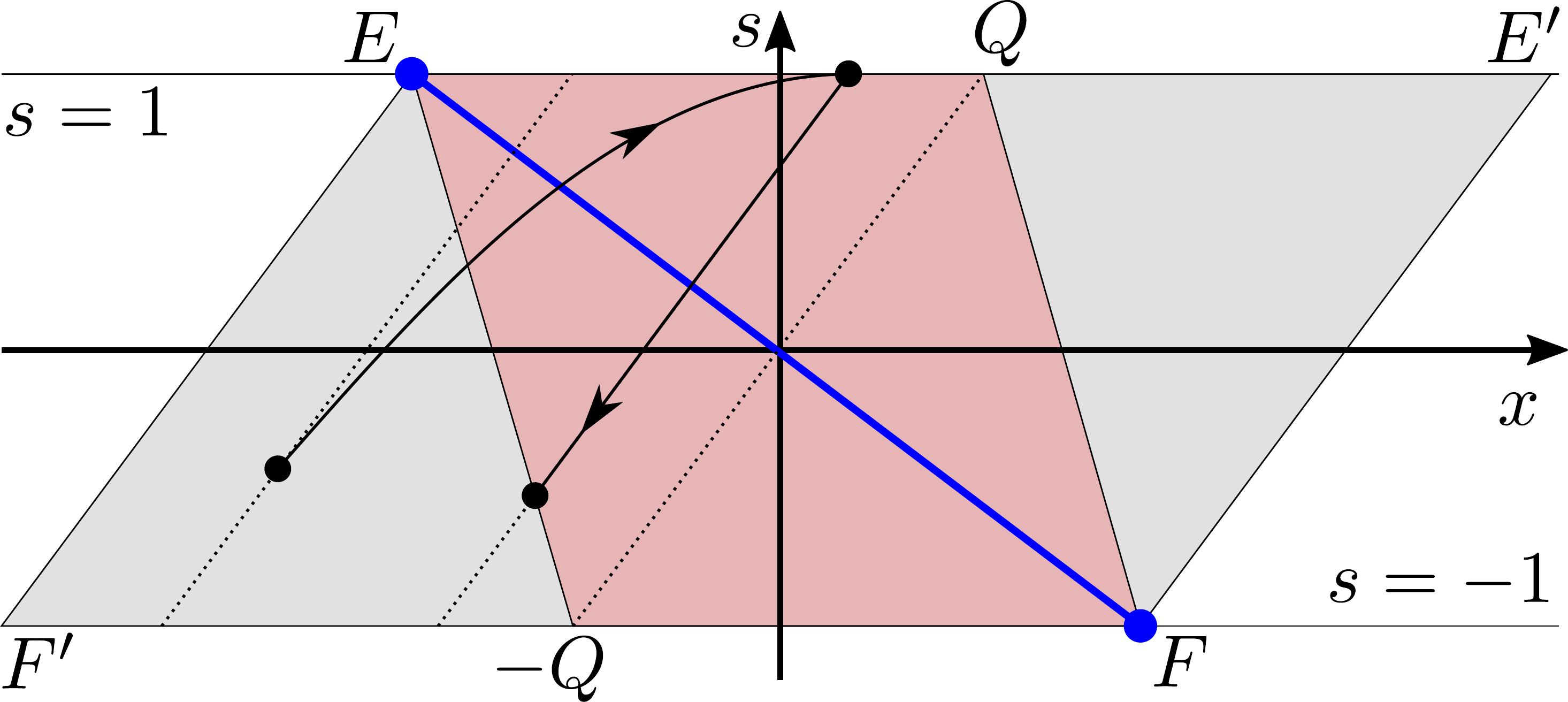}
  \caption{$\lambda\geq0$.}
  \label{fig:2n1}
\end{subfigure}\,
\begin{subfigure}{0.48\textwidth}
  \centering
  \includegraphics[width=\textwidth]{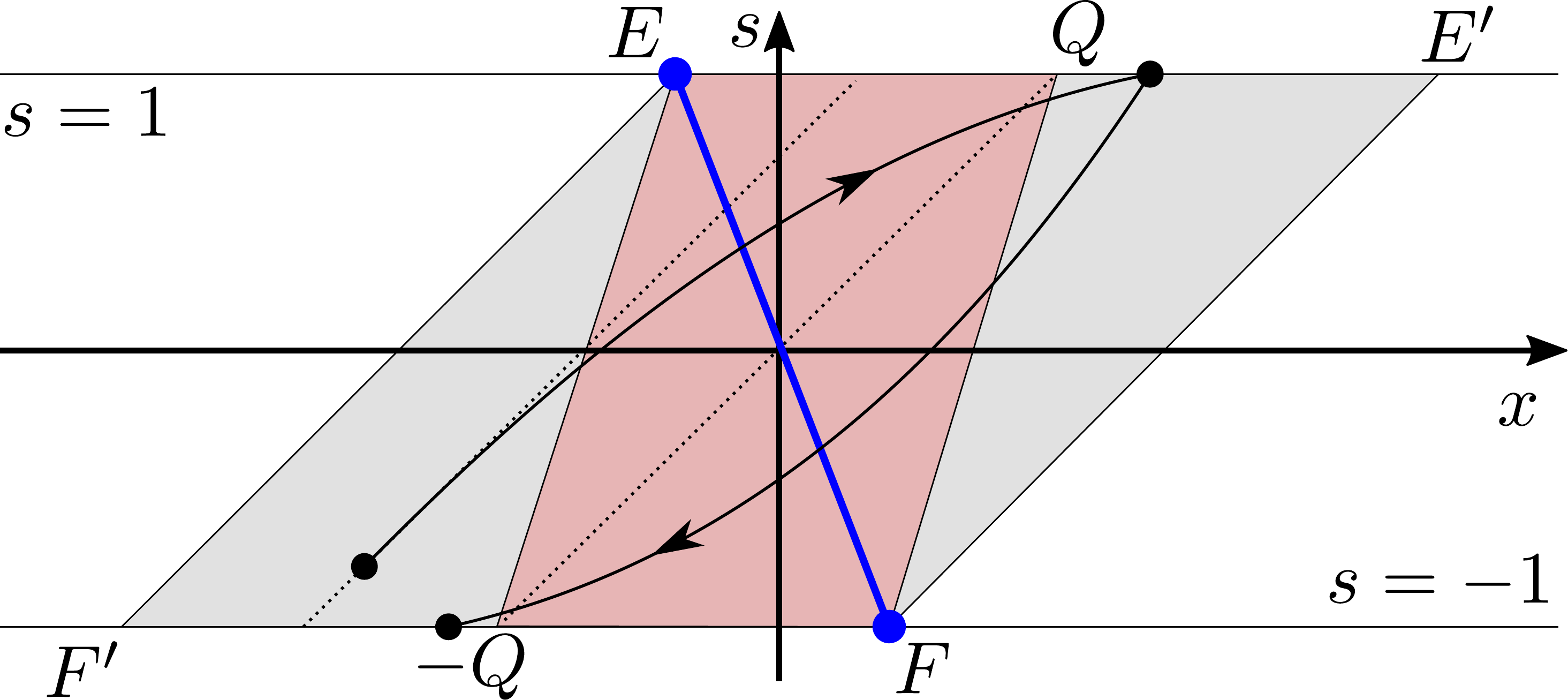}
  \caption{$\lambda<0$}
  \label{fig:2n2}
\end{subfigure}
\caption{Parallelograms $\Sigma$ and $\Pi$ for $\beta=-1$.}
\label{fig:2n}
\end{figure}

If $\lambda<0$ (see Fig.~\ref{fig:2n2}), then the argument used in Section
\ref{Case:7} shows that the set $M=\{(x,s):\, x\le -1,\, s=-1\}\cup\{(x,s):\,
x\ge 1,\, s=1\}$  is invariant under the map
$f$ and all the trajectories starting in $M$ converge to the $2$-periodic
orbit $(-1,-1)$, $(1,1)$.
If $(x_n,s_n)\in \Pi\setminus \Sigma$, then $(x_{n+1},s_{n+1})\in M\cup
\Sigma$ because $\beta=-1$.
Finally, if $(x_n,s_n)\not \in\Pi$, then $(x_{n+k},s_{n+k})\in M\cup
\Sigma$ for some $k\in \mathbb{N}$. This completes the proof of Theorem
\ref{Theorem:1}. \endproof

% after finitely many iterations $|s_{n+k}|=1$
%and hence either $(x_{n+k},s_{n+k})\in M\cup \Sigma$ or
%$(x_{n+k},s_{n+k})\not \in M\cup \Sigma$. but then $(x_{n+k+1},s_{n+k+1})\in M\cup \Sigma$ due to $\lambda<0$.

\subsection*{Acknowledgements}
NB acknowledges Saint-Petersburg State University
(research grant $6.38.223.2014$), the Russian Foundation for Basic Research
(project
No $16-01-00452$) and DFG project SFB $910$.
The work of PG was supported by the DFG Heisenberg
Programme and DFG project SFB $910$.
DR was supported by NSF grant DMS-$1413223$.

%\begin{thebibliography}{99}

%\end{thebibliography}
%\clearpage
\bibliographystyle{plain}

%\bibliography{bibliobiblio}
\bibliography{stopbib}
\end{document}